\numberwithin{equation}{section}
\newtheorem{theorem}{Theorem}[section]
\newtheorem{corollary}[theorem]{Corollary}
\newtheorem{lemma}[theorem]{Lemma}
\theoremstyle{definition}
\newtheorem{definition}[theorem]{Definition}
\newtheorem{conjecture}[theorem]{Conjecture}
\def\OA{\mathrm{OA}}
\def\eref#1{$(\ref{#1})$}
\def\sref#1{\S$\ref{#1}$}
\def\lref#1{Lemma~$\ref{#1}$}
\def\tref#1{Theorem~$\ref{#1}$}
\def\Tref#1{Table~$\ref{#1}$}
\def\cjref#1{Conjecture~$\ref{#1}$}
\def\cyref#1{Corollary~$\ref{#1}$}
\def \L{\mathcal{L}}
\def \M{\mathscr{M}}
\def \A{\mathscr{A}}
\def \G{\mathscr{G}}
\def \dim {\mathcal{B}}
\def \blocks {\mathscr{B}}
\def \sym {\mathcal{S}}
\def \Z {\mathbb{Z}}
\def \= {\equiv}
\def \binom#1#2{{#1\choose#2}}
\def \leq {\leqslant}
\def \geq {\geqslant}
\def \le {\leqslant}
\def \ge {\geqslant}
\def \eps{\varepsilon}
\def \mod#1{{\:({\rm mod}\ #1)}}
\let\oldproofname=\proofname
\renewcommand{\proofname}{\rm\bf{\oldproofname}}
\def\eps{\varepsilon}
\newcommand{\ignore}[1]{}
\title{Parity of Sets of Mutually Orthogonal Latin Squares \footnote{Research
supported by ARC grant DP150100506}}
\author{Nevena Franceti\'{c}\footnote{
School of Mathematical Sciences, Monash University, Victoria 3800, Australia} 
\quad Sarada Herke\footnotemark[2] \footnote{Current address: School of Mathematics and Physics, The University of Queensland,
QLD 4072, Australia}  \quad Ian M. Wanless\footnotemark[2]}
\date{}
\begin{document}

\maketitle

\begin{abstract}
Every Latin square has three attributes that can be even or odd, but
any two of these attributes determines the third. Hence the parity of
a Latin square has an information content of 2 bits.  We extend the
definition of parity from Latin squares to sets of mutually orthogonal
Latin squares (MOLS) and the corresponding orthogonal arrays (OA).
Suppose the parity of an $\OA(k,n)$ has an information content of
$\dim(k,n)$ bits.  We show that $\dim(k,n) \leq {k \choose 2}-1$. For
the case corresponding to projective planes we prove a tighter bound,
namely $\dim(n+1,n) \leq {n \choose 2}$ when $n$ is odd and
$\dim(n+1,n) \leq {n \choose 2}-1$ when $n$ is even.  Using the
existence of MOLS with subMOLS, we prove that if 
$\dim(k,n)={k \choose 2}-1$ then $\dim(k,N) = {k \choose 2}-1$ 
for all sufficiently large $N$.

Let the {\em ensemble} of an $\OA$ be the set of Latin squares derived
by interpreting any three columns of the $\OA$ as a Latin square. We
demonstrate many restrictions on the number of Latin squares of each
parity that the ensemble of an $\OA(k,n)$ can contain. These
restrictions depend on $n\mod4$ and give some insight as to why it is
harder to build projective planes of order $n\= 2\mod 4$ than for
$n\not\= 2\mod4$. For example, we prove that when $n\= 2\mod 4$ it is
impossible to build an $\OA(n+1,n)$ for which all Latin squares in the
ensemble are isotopic (equivalent to each other up to
permutation of the rows, columns and symbols).

\bigskip

\noindent Keywords: parity, Latin square, MOLS, subMOLS, orthogonal array,
projective plane.
\end{abstract}

\section{Introduction}\label{s:intro}

The aim of this paper is to develop a notion of parity for the
orthogonal arrays that define sets of mutually orthogonal Latin
squares (MOLS).  The important notion of parity for permutations is
widely known. A \emph{Latin square} of order $n$ is an $n \times n$
square with entries in an $n$-set $\Lambda$, called the alphabet,
having the property that every element of $\Lambda$ occurs exactly
once in each row and each column of the square.  Latin squares are
$2$-dimensional analogues of permutations and they too have a notion
of parity. This parity plays a pivotal role in a famous conjecture of
Alon and Tarsi (see e.g.~\cite{AL15,SW12} and the references therein) and has
also proved crucial in a variety of other quite distinct
investigations.  In \cite{DGGL10}, parity was found to explain
observed limitations on which Latin squares could be embedded together
in topological surfaces.  In \cite{Wan04} and \cite{KMOW14}, parity
explains large components that arise in graphs made by local
switchings in Latin squares or in 1-factorisations of the complete
graph, respectively. Parity considerations can also assist in 
diagnosing symmetries of Latin squares \cite{Kot12}.

It is clear that parity of single Latin squares is a useful
concept. It is therefore natural to try to extend this concept to sets
of MOLS.  A set of MOLS is a set of Latin squares such that when any
two of the squares are superimposed, every ordered pair of symbols 
occurs exactly once. For any list $M=[M_1,\dots,M_{k-2}]$ of MOLS
on an alphabet $\Lambda$ we define an $n^2\times k$ matrix, denoted
$\A(M)$, by taking one row
$\big[r,c,M_1[r,c],\dots,M_{k-2}[r,c]\big]$ for each pair $(r,c)\in\Lambda^2$.
(For the sake of definiteness, we insist that these rows are ordered
lexicographically. Also, if $M$ is given as a set rather than a list, then
we impose lexicographic order on $M$ in order to create $\A(M)$.) 
Now, $\A(M)$ is an \emph{orthogonal array}
$\OA(k,n)$ (of strength $2$ with $n$ levels and index $1$) because it
has the defining property that every pair of columns contains
every ordered pair of elements of $\Lambda$ exactly once.  Conversely,
if $A$ is an $\OA(k,n)$ we define $\M(A)$ to be the set of $k-2$ MOLS
formed by taking the entry in row $r$, column $c$ of the $i$-th Latin
square to be the entry in column $i+2$ of the row of $A$ that begins
$[r,c,\dots]$. In this sense, an $\OA(k,n)$ is equivalent to a set of
$k-2$ MOLS of order $n$ (see e.g.~\cite[III.3]{handbook} for more
details and background).  Throughout this paper, we assume that $k$
and $n$ are integers and that $n\geq 2$ and $3 \leq k \leq n+1$.

Let $\sym_\Lambda$ denote the permutations of $\Lambda$.
Two orthogonal arrays on alphabet $\Lambda$ are {\em isotopic} if one
can be obtained from the other by some sequence of operations of the
following type: choose a $\gamma\in\sym_\Lambda$ and a column $c$ and
apply $\gamma$ to every entry in column $c$. We say two orthogonal
arrays are {\em conjugate} if one can be obtained from the other by
permuting the columns. We say two Latin squares
are isotopic (respectively, conjugate) if their
orthogonal arrays are isotopic (respectively, conjugate).
We say two orthogonal arrays are isomorphic if, 
up to possible reordering of the rows of the arrays,
one is isotopic to a conjugate of the other. 


A finite projective plane of order $n$ (see e.g.~\cite{handbook} for
the definition) can be used to define an $\OA(n+1,n)$ and vice
versa. However, there are some subtleties to this relationship.  Let
$\L$ be a line of a finite projective plane of order $n$. We can make
an $\OA(n+1,n)$, using $\L$ as the {\em ``line at infinity''}, as
follows.  First, we number the points on $\L$ as $p_1,\ldots,p_{n+1}$.
Next, for $1\le i\le n+1$, we number the lines (other than $\L$)
through $p_i$, calling them $\ell_{i1},\ldots,\ell_{in}$.  Finally,
for each point $q$ not on $\L$ we add a row to $A$ which has entry $i$
in column $j$ if the line through $q$ and $p_j$ is $\ell_{ji}$.  The
choices for the numbering of lines and points do not change the
isomorphism class of the resulting orthogonal array.  However,
different choices for $\L$ may produce non-isomorphic orthogonal
arrays.  Hence, each projective plane of order $n$ can potentially
produce representatives from up to $n^2+n+1$ isomorphism classes of
$\OA(n+1,n)$. The reverse relationship is simpler. Every $\OA(n+1,n)$
can be derived in the above way from a unique projective plane.


Given a Latin square $L=(l_{ij})$ of order $n$, we can identify $3$
parities $\pi_r(L)$, $\pi_c(L)$ and $\pi_s(L)$ as follows.  We assume
that the symbols $\Lambda$ index the rows and columns of $L$ (in this
paper $\Lambda$ will either be $\{1,\dots,n\}$ or $\Z_n$).  Let
$\pi:\sym_\Lambda\rightarrow\Z_2$ denote the usual {\em parity}
homomorphism with kernel the alternating group.  For all $i\in\Lambda$
we can define a permutation of $\Lambda$ by $j\mapsto l_{ij}$.
Applying $\pi$ to these permutations and taking the sum, mod 2, we
obtain the {\em row-parity} $\pi_r(L)$. The {\em column-parity}
$\pi_c(L)$ is defined similarly, using each permutation $i\mapsto
l_{ij}$ formed by fixing some $j\in\Lambda$.  The {\em symbol-parity}
$\pi_s(L)$ is the sum of the parities of the permutations formed by
fixing an $\ell\in\Lambda$ and mapping $i\mapsto j$ whenever
$l_{ij}=\ell$. These three parities are related by
\begin{equation}\label{e:fundLSpar}
\pi_r + \pi_c + \pi_s \= {n \choose 2}\quad\mod2.
\end{equation} 
This relation has been rediscovered numerous times.  Different proofs
have been published in \cite{DGGL10, Gly10, Jan95, Wan04, Zappa96} and
we are also aware of other researchers finding their own proof but not
publishing it.  We will prove a generalisation of \eref{e:fundLSpar}
in \lref{l:LStriangleparity}, providing a new proof of the
original result in the process.

By \eref{e:fundLSpar}, a Latin square has one of the following 
{\em parity types}:
\begin{align*}
\pi_r \pi_c \pi_s  &\in \{000, 011, 101, 110 \} 
  && \textrm{ if } n \equiv 0,1 \mod{4},\\
\pi_r \pi_c \pi_s  &\in \{111, 100, 010, 001 \} 
  && \textrm{ if } n \equiv 2,3 \mod{4}.
\end{align*}
It is known \cite{CW16} that the proportion of Latin squares 
which have each of the four parity types that are possible
for order $n$ is $\frac14+o(1)$ as $n\rightarrow\infty$
(for a related result, see \cite{Alp17}). 
An \textit{equiparity} Latin square has $\pi_r \pi_c \pi_s \in \{000, 111\}$. 
Loosely speaking, ``nice'' Latin squares have a greater than
$1/4$ chance of being equiparity.
For example, all Cayley tables of finite groups are necessarily equiparity, 
because they are isotopic to all of their conjugates \cite[Thm~4.2.2]{KD15}.


In \sref{s:definitions}, we will define two notions of parity
associated with an orthogonal array. The first one we refer to as
$\tau$-parity; it is a direct generalisation of the definition of
parities $\pi_r,\pi_c,\pi_s$ for a Latin square. The second one we
call $\sigma$-parity. It was introduced by Glynn and Byatt~\cite{GB12}
(see also Glynn~\cite{Gly10}) for $n$ even, but we define it for all
$n$. We establish the precise relationship between these two seemingly
different definitions of parity of an $\OA$. For the remainder of the
paper we look into properties of $\tau$-parity and $\sigma$-parity and
insights that each can offer.

In \sref{s:graphs} we introduce graphs that are useful tools for
studying both notions of parity.  
In \sref{s:numpar} we consider the question of how many different
$\tau$-parities an $\OA(k,n)$ can have. We view this question in
information theory terms, by measuring the number of independent bits
of information that there are in the $\tau$-parity. For example,
the $\tau$-parity of an $\OA(3,n)$ has an information content of 2 bits
given that any two of $\pi_r,\pi_c,\pi_s$ determine the third, by
\eref{e:fundLSpar}. We prove a bound on the information content
of the $\tau$-parity of an $\OA(k,n)$, and prove that this bound is
achieved for all large $n$ when $k\le 5$.

In \sref{s:PP} we consider the important special case of the
orthogonal arrays derived from projective planes as described above.
The theory developed in earlier sections is applied to this case.
Stronger conclusions can be drawn than in the general case. For example,
we show that the information content in the $\tau$-parity is lower than
the general bound from \sref{s:numpar}.

Let $A$ be an $\OA(k,n)$. The {\it ensemble} of $A$ is the set of
Latin squares $\M(B)$, where $B$ ranges across all ${k\choose 3}$
choices of $\OA(3,n)$ formed by $3$ columns of $A$ (the columns should
occur in the same order in $B$ as they do in $A$).  In
\sref{s:ensemble} we investigate the parities of the Latin squares in
the ensemble of $A$. We find significant restrictions on the number of
equiparity Latin squares, particularly in the case when $k=n+1$.
Among other things, these allow us to deduce that for $n\= 2\mod4$
there is no $\OA(n+1,n)$ for which all Latin squares in the ensemble
are isotopic to each other. This is in contrast with the 
$n\not\= 2\mod4$ case, where the Desarguesian projective planes
provide examples of $\OA(n+1,n)$ for which all Latin squares in the
ensemble are isotopic to the Cayley table of the elementary abelian
group of order $n$.

\section{Two notions of parity}\label{s:definitions}

Throughout the paper, we use the discrete interval notation
$[a,b]=\{a,a+1,\dots, b\}$ for $a,b \in \Z$.  In this section we
define two notions of parity for orthogonal arrays and demonstrate the
relationship between them.  The two notions of parity are called
$\tau$-parity and $\sigma$-parity.  Each $\tau^c_{ij}$ of the
$\tau$-parity and each $\pi(\sigma_{ij})$ of the $\sigma$-parity is an
element of $\Z_2$ and hence, throughout this paper, every equation
involving $\tau$-parities or $\sigma$-parities is assumed to be
calculated in $\Z_2$. Also, because it makes the definition of
$\sigma$-parity more natural, we will choose to always index the rows
of an $\OA$ by $\Lambda^2$, where $\Lambda$ is the alphabet.

\subsection{$\tau$-parity}\label{ss:tau-parities}

\begin{definition}[$\tau$-parity]
Let $A=(a_{r\ell})$ be an $\OA(k,n)$ on an alphabet $\Lambda$, where
$r \in \Lambda^2$ and $\ell \in [1,k]$.  For each ordered triple
$(c,i,j)$ of distinct numbers in $[1,k]$ and for each $s \in
\Lambda$, define $\rho_s(c,i,j)$ to be the permutation of $\Lambda$
mapping $a_{ri} \mapsto a_{rj}$ whenever $a_{rc}=s$.  Define
$\tau^c_{ij}=\tau^c_{ij}(A)$ as the sum of the parities of these $n$
permutations; that is, 
$\tau^c_{ij}=\sum_{s \in \Lambda}^{}\pi(\rho_s(c,i,j))$.  
We refer to the vector of parities $\tau^c_{ij}$ indexed by the
$k(k-1)(k-2)$ triples $(c,i,j)$ as the \textit{$\tau$-parity of $A$}.
\end{definition}

Note that $\{a_{ri}:a_{rc}=s \} = \Lambda = \{a_{rj}:a_{rc}=s\}$ 
by the definition of an orthogonal array. Hence,
$\{\rho_s(c,i,j) :  s \in \Lambda\}$ is indeed a set of $n$
well-defined permutations.

The $\tau$-parity of an orthogonal array $A$ naturally extends the
notion of the parity of a single Latin square to a set of MOLS. If
$M_1,\dots,M_{k-2}$ are the MOLS in $\M(A)$ then for the Latin square
$M_{i-2}$ we have $\pi_r =\tau^1_{2i}$, $\pi_c=\tau^2_{1i}$, and
$\pi_s = \tau^i_{12}$.  For distinct $c,i,j, \ell \in [1,k]$ we have
\begin{align}
 \tau_{ij}^c  &= \tau_{ji}^c, \label{e:tauidxcommutativity} \\
 \tau_{ij}^c  &= \tau_{i\ell}^c + \tau_{\ell j}^c. \label{e:Fixedcol}
\end{align}
The first equation follows from the fact that a permutation and
its inverse have the same parity. The second equation follows from
composition of permutations.

Next we consider $\tau$-parities of isomorphic orthogonal arrays.  
Permuting the rows of an orthogonal array $A$ has no effect on $\M(A)$,
and permuting the symbols within a column results in an isotopic set of 
MOLS. Permuting the columns yields a conjugate set of MOLS.
We investigate the effect of these basic operations on the
$\tau$-parity.

\begin{lemma}\label{l:tauparconj}
Let $A_1$ and $A_2$ be two orthogonal arrays, $\OA(k,n)$, on alphabet
$\Lambda$.  Index the columns of $A_1$ and $A_2$ by $[1,k]$. 
Let $c,i,j \in [1,k]$ be distinct integers.
\begin{itemize}
\item[(i)] If $A_2$ is obtained by permuting the rows of $A_1$, then
  $\tau^c_{ij}(A_2)=\tau^c_{ij}(A_1)$.
\item[(ii)] If $A_2$ is obtained by permuting the columns of $A_1$ by 
  $\gamma$, then $\tau^{\gamma(c)}_{\gamma(i)\gamma(j)}(A_2)=\tau^c_{ij}(A_1)$.
\item[(iii)] Let $\gamma\in\sym_\Lambda$ and $c\in[1,k]$.  If $A_2$ is
  obtained by applying $\gamma$ to every entry in column $c$ of $A_1$,
  then
\begin{align*}
\tau^{c}_{ij}(A_2) &= \tau^{c}_{ij}(A_1), \\
\tau^i_{jc}(A_2) &= \tau^i_{jc}(A_1) + n \cdot \pi(\gamma) ,\\
\tau^{d}_{ij}(A_2) &= \tau^{d}_{ij}(A_1) 
\textrm{ if } d\notin\{c,i,j\}.
\end{align*}
\end{itemize}
\end{lemma}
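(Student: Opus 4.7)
The plan is to reduce each part to a direct calculation from the definition of $\tau^c_{ij}$, using the key observation that each permutation $\rho_s(c,i,j)$ is determined by the unordered set of rows of the array and depends only on the data in columns $c$, $i$, $j$.

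Part (i) is essentially tautological: for each $s\in\Lambda$, the multiset $\{(a_{ri},a_{rj}):a_{rc}=s\}$ is insensitive to the ordering of rows, so $\rho_s(c,i,j)$ and hence its parity are unchanged. For part (ii), adopt the convention that column $\gamma(\ell)$ of $A_2$ coincides with column $\ell$ of $A_1$. Then the rows of $A_2$ whose $\gamma(c)$-th entry equals $s$ are precisely the rows of $A_1$ whose $c$-th entry equals $s$, and on each such row the assignment $a_{r,\gamma(i)}^{A_2}\mapsto a_{r,\gamma(j)}^{A_2}$ agrees with $a_{ri}^{A_1}\mapsto a_{rj}^{A_1}$. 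Hence the relevant permutations, and therefore their parities, match.

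Part (iii) is the substantive part. The three displayed equations correspond to the three possible roles that $c$ can play in a triple $(d,i,j)$: as the superscript, as one of the subscripts, or not appearing at all. For the superscript case, reindexing the sum over $s\in\Lambda$ via the bijection $s\mapsto\gamma(s)$ shows that exactly the same multiset of $n$ permutations is obtained in $A_2$ as in $A_1$, so the parity sum is unchanged. For the third equation, the assumption $d\notin\{c,i,j\}$ means columns $d$, $i$, $j$ are identical in the two arrays, so $\tau^d_{ij}$ is computed from the same data. The genuine shift occurs only in the second equation: column $i$ is unchanged, so the rows partition identically by $i$-value, but the images in column $c$ are transformed by $\gamma$, giving $\rho_s^{A_2}(i,j,c)=\gamma\circ\rho_s^{A_1}(i,j,c)$. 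Since $\pi$ is a homomorphism from $\sym_\Lambda$ to $\Z_2$, each of the $n$ summands picks up an additive term $\pi(\gamma)$, yielding the claimed $n\cdot\pi(\gamma)$. The remaining case in which $c$ appears as the first rather than second subscript is reduced to the second equation by \eref{e:tauidxcommutativity}.

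The only real difficulty is bookkeeping: confirming that every placement of the index $c$ is covered by one of the three equations and fixing a consistent convention for the column-permutation case. Once the definitions are unwound, the mathematical content is carried entirely by the homomorphism property $\pi(\gamma\circ\sigma)=\pi(\gamma)+\pi(\sigma)$ in $\Z_2$.
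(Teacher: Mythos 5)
Your proposal is correct and follows essentially the same route as the paper: parts (i) and (ii) are definitional, the superscript and absent-index cases of (iii) reduce to observing that the same $n$ permutations arise, and the only substantive step is $\rho_s^{A_2}(i,j,c)=\gamma\rho_s^{A_1}(i,j,c)$ together with the homomorphism property of $\pi$, giving the $n\cdot\pi(\gamma)$ term exactly as in the paper. Your extra remark that the case with $c$ as first subscript reduces to the stated one via \eref{e:tauidxcommutativity} is a harmless clarification the paper leaves implicit.
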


\begin{proof}
Statements (i) and (ii) follow immediately from the definition of
$\tau$-parity.  To prove statement (iii), first note that
$\tau^{c}_{ij}(A_1)$ and $\tau^{c}_{ij}(A_2)$ are each the sum of
the parities of the same set of $n$ permutations of $\Lambda$, hence
$\tau^{c}_{ij}(A_2)=\tau^{c}_{ij}(A_1)$.  It is also clear that
$\tau^{d}_{ij}(A_2) = \tau^{d}_{ij}(A_1)$ for $d\notin\{c,i,j\}$. 
Finally, in $\Z_2$ we have
\[
\tau^i_{jc}(A_2) 
= \sum_{s \in \Lambda} \pi(\gamma \rho_s(i,j,c)) 
= \sum_{s \in \Lambda} \pi(\rho_s(i,j,c)) + \sum_{s \in \Lambda}\pi(\gamma)
= \tau^i_{jc}(A_1) + n\cdot\pi(\gamma).
\qedhere
\]
\end{proof}

Suppose $M=[M_1,M_2,\dots,M_{k}]$ are MOLS and that $A=\A(M)$ is the
corresponding $\OA(k+2,n)$. 
Let $A'$ be the $\OA(k+2,n)$ obtained by permuting the columns of $A$ by
some permutation $\gamma$. Suppose $M'=\M(A')=[M'_1,M'_2,\dots,M'_{k}]$.
Let $\pi_{r,i},\pi_{c,i},\pi_{s,i}$ denote,
respectively, the row, column and symbol parity of $M_i$ for $i\in[1,k]$,
and let $\pi'_{r,i},\pi'_{c,i},\pi'_{s,i}$ be the corresponding parities
of $M'_i$. It will follow from our work in \sref{s:numpar} that there
is not enough information in $\{\pi_{r,i},\pi_{c,i},\pi_{s,i}:i\in[1,k]\}$ 
to determine $\{\pi'_{r,i},\pi'_{c,i},\pi'_{s,i}:i\in[1,k]\}$ for all
$\gamma$. However, there are some things we can say:
\begin{itemize}
\item[(i)]
If the first two columns are fixed points of $\gamma$ then $M'$ is simply
a reordering of the Latin squares in $M$, and the parities will be permuted
accordingly. 

\item[(ii)]
If $\gamma$ is the transposition $(12)$ then $M'_i$ is the transpose of
$M_i$, so $\pi'_{r,i}=\pi_{c,i}$, $\pi'_{c,i}=\pi_{r,i}$ and $\pi'_{s,i}=\pi_{s,i}$
for $i\in[1,k]$.

\item[(iii)] 
If $\gamma$ is the transposition $(23)$ then $M'_1$ is a conjugate of
$M_1$ for which $\pi'_{r,1}=\pi_{r,1}$, $\pi'_{c,1}=\pi_{s,1}$ and 
$\pi'_{s,1}=\pi_{c,1}$. Moreover, $\pi'_{r,i}=\pi_{r,i}+\pi_{r,1}$
for $i\in[2,k]$, because $\tau^1_{2(i+2)}=\tau^1_{23}+\tau^1_{3(i+2)}$ by
\eref{e:Fixedcol}.

\end{itemize}

Note that any permutation of the columns can be achieved by composing the
permutations in (i), (ii) and (iii) above.

\subsection{$\sigma$-parity}\label{ss:sigma-parities}

In this subsection, we consider an alternative definition of parity of
an orthogonal array. This definition was introduced by Glynn and
Byatt~\cite{GB12} (see also Glynn~\cite{Gly10}) for orthogonal arrays
with even alphabet size, although it is also useful for odd alphabet
sizes.

\begin{definition}[$\sigma$-parity] 
Let $A=(a_{r\ell})$ be an $\OA(k,n)$ on alphabet $\Lambda$, where 
$r\in \Lambda^2$ and $\ell \in [1,k]$. Let $i,j \in [1,k]$ be two
distinct integers. Then $\sigma_{ij}:\Lambda^2 \rightarrow \Lambda^2$
is the permutation defined by $\sigma_{ij}(r) = (a_{ri},a_{rj})$.
We sometimes write $\sigma_{ij}^A$ for $\sigma_{ij}$ to stress the
role that $A$ plays. 
We refer to the vector of parities $\pi(\sigma_{ij})$ indexed by the
$k(k-1)$ pairs $(i,j)$ as the \textit{$\sigma$-parity of~$A$}.
\end{definition}

In \sref{ss:equivalencetausigma}, we establish an equivalence between
$\tau$-parity and $\sigma$-parity. Here, we consider some basic
properties of $\sigma$-parity. First, we observe the effect of
interchanging the indices.

\begin{lemma}\label{l:sigmaijvssigmaji}
Given an $\OA(k,n)$ and distinct integers $i,j \in [1,k]$, 
\begin{equation} \label{e:sigmacommutativity}
 \pi(\sigma_{ji}) = \pi(\sigma_{ij}) + {n \choose 2}.
\end{equation}
\end{lemma}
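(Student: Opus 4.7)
The plan is to express $\sigma_{ji}$ as the composition of $\sigma_{ij}$ with a fixed, simple involution of $\Lambda^2$, and then compute the parity of that involution by inspecting its cycle structure.

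First I would observe that, directly from the definition, for every $r \in \Lambda^2$ we have
\[
\sigma_{ji}(r) = (a_{rj},a_{ri}) = \phi\bigl(a_{ri},a_{rj}\bigr) = \phi\bigl(\sigma_{ij}(r)\bigr),
\]
where $\phi:\Lambda^2\to\Lambda^2$ is the coordinate-swap permutation $\phi(x,y)=(y,x)$. Hence $\sigma_{ji} = \phi\circ\sigma_{ij}$ as permutations of $\Lambda^2$, and since $\pi$ is a homomorphism,
\[
\pi(\sigma_{ji}) = \pi(\phi) + \pi(\sigma_{ij}).
\]

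The only remaining task is to compute $\pi(\phi)$. The permutation $\phi$ fixes the $n$ diagonal points $(x,x)$ and, for each unordered pair $\{x,y\}\subseteq\Lambda$ with $x\neq y$, swaps $(x,y)\leftrightarrow (y,x)$. Thus $\phi$ is a product of $\binom{n}{2}$ disjoint transpositions, so $\pi(\phi)=\binom{n}{2}\bmod 2$. Substituting this value gives exactly \eref{e:sigmacommutativity}.

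There is no real obstacle here; the identity is essentially bookkeeping once one notices the factorisation $\sigma_{ji}=\phi\circ\sigma_{ij}$. The only step that requires any care is ensuring the cycle count of $\phi$ is correct, but this is immediate from partitioning $\Lambda^2$ into the diagonal and the off-diagonal pairs.
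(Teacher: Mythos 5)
Your proof is correct and is essentially the paper's argument: the paper factors $\sigma_{ji}$ as $\sigma_{ij}$ followed by the involution $\prod_{\{u,v\}}\bigl(R(u,v),R(v,u)\bigr)$ acting on the row set, which is just your coordinate swap $\phi$ conjugated by $\sigma_{ij}^{-1}$, and both proofs conclude by counting the same $\binom{n}{2}$ disjoint transpositions. Your left-composition with $\phi$ is marginally cleaner since it avoids introducing the bijection $R$ explicitly, but the underlying idea is identical.
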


\begin{proof}
Let $A=(a_{r\ell})$ be an $\OA(k,n)$ on alphabet $\Lambda$, where 
$r\in \Lambda^2$ and $\ell \in [1,k]$.  Since $A$ is an $\OA$, there is
a bijection $R : \Lambda^2 \rightarrow\Lambda^2$ for which 
$R(u,v) = r$ where $u=a_{ri}$ and $v=a_{rj}$. Then 
$\sigma_{ji} = \sigma_{ij} 
\prod_{\{u,v\}} \big(R(u,v), R(v,u)\big)$, 
where $\big(R(u,v), R(v,u)\big)$ is a transposition
and the product is over all unordered pairs of distinct elements of $\Lambda$. 
The claim follows.
\end{proof}

Note that \eref{e:sigmacommutativity} relies on an essential
property of orthogonal arrays; namely, that every ordered pair of
distinct symbols occurs exactly once in every pair of
columns of an $\OA$. We refer to this equation to derive further
properties of both $\sigma$- and $\tau$-parities.

Next, we consider the analogue of \lref{l:tauparconj} for
$\sigma$-parity; that is, we consider the $\sigma$-parities of
isomorphic orthogonal arrays.

\begin{lemma}\label{l:sigmaparconj}
 Let $A_1$ and $A_2$ be two orthogonal arrays, $\OA(k,n)$, on an
 alphabet $\Lambda$.  Index the columns of $A_1$ and $A_2$ by $[1,k]$. 
 Let $i,j \in [1,k]$ be two distinct integers.
\begin{itemize}
\item[(i)] If $A_2$ is obtained by permuting the rows of $A_1$ by
  $\gamma$, then $\pi(\sigma^{A_2}_{ij})=\pi(\sigma^{A_1}_{ij}) + \pi(\gamma)$.
\item[(ii)] If $A_2$ is obtained by permuting the columns of $A_1$ by
  $\gamma$, then $\pi(\sigma^{A_2}_{\gamma(i)\gamma(j)})=\pi(\sigma^{A_1}_{ij})$.
\item[(iii)] Let $\gamma\in\sym_\Lambda$ and $i\in[1,k]$.  If $A_2$ is
  obtained by applying $\gamma$ to every entry in column $i$ of $A_1$,
  then $\pi(\sigma^{A_2}_{ij})= \pi(\sigma^{A_1}_{ij}) + n \cdot
  \pi(\gamma)$, and if $i' \neq i$ then
  $\pi(\sigma^{A_2}_{i'j})= \pi(\sigma^{A_1}_{i'j})$.
\end{itemize}
\end{lemma}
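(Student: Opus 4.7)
The plan is to handle each of (i)--(iii) by expressing $\sigma^{A_2}_{ij}$ as a composition of $\sigma^{A_1}_{ij}$ with an auxiliary permutation of $\Lambda^2$ whose parity is immediate. In each case I would write the entries of $A_2$ in terms of the entries of $A_1$ and substitute directly into the definition $\sigma_{ij}(r)=(a_{ri},a_{rj})$.

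For part~(i), letting $\gamma\in\sym_{\Lambda^2}$ denote the row permutation, the relation $a^{A_2}_{r,\ell}=a^{A_1}_{\gamma^{-1}(r),\ell}$ gives $\sigma^{A_2}_{ij}=\sigma^{A_1}_{ij}\circ\gamma^{-1}$, and since $\pi(\gamma^{-1})=\pi(\gamma)$ the stated identity follows. For part~(ii), under the natural convention that column $c$ of $A_1$ is relabelled as column $\gamma(c)$ in $A_2$, we have $a^{A_2}_{r,\gamma(c)}=a^{A_1}_{r,c}$; substituting shows $\sigma^{A_2}_{\gamma(i)\gamma(j)}(r)=\sigma^{A_1}_{ij}(r)$ for every $r\in\Lambda^2$, so the two permutations are literally equal, which is stronger than the stated parity identity.

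Part~(iii) is the only step with any real content. I would introduce the auxiliary permutation $\Gamma:\Lambda^2\to\Lambda^2$ defined by $\Gamma(x,y)=(\gamma(x),y)$. Since $\Gamma$ stabilises each of the $n$ fibres $\Lambda\times\{y\}$ and acts on each as a copy of $\gamma$, it is a disjoint product of $n$ copies of $\gamma$, so $\pi(\Gamma)=n\cdot\pi(\gamma)$. Because $a^{A_2}_{r,i}=\gamma(a^{A_1}_{r,i})$ while $a^{A_2}_{r,j}=a^{A_1}_{r,j}$ (noting that $j\neq i$ by hypothesis), substitution yields $\sigma^{A_2}_{ij}=\Gamma\circ\sigma^{A_1}_{ij}$, which gives the first identity. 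For the second identity, for any $i'\in[1,k]\setminus\{i,j\}$ both columns $i'$ and $j$ are untouched by the operation, hence $\sigma^{A_2}_{i'j}=\sigma^{A_1}_{i'j}$ as permutations.

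The only non-routine observation is the parity computation for $\Gamma$, which relies on the standard fact that the parity of a product of disjoint permutations is the sum of their parities; everything else is direct substitution. I do not anticipate any genuine obstacle, and the proof should be short.
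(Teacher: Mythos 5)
Your proof is correct, and for the only substantive part it takes a genuinely different route from the paper. Parts (i) and (ii) coincide with the paper's argument (up to the harmless $\gamma$ versus $\gamma^{-1}$ convention for row permutations, which does not affect parity). For part (iii) you work on the codomain side: you post-compose with the block permutation $\Gamma(x,y)=(\gamma(x),y)$ of $\Lambda^2$ and read off $\pi(\Gamma)=n\cdot\pi(\gamma)$ from the fact that $\Gamma$ acts as a disjoint union of $n$ copies of $\gamma$ on the fibres $\Lambda\times\{y\}$. The paper instead works on the domain side: it writes $\gamma$ as a product of $m$ transpositions and, using the bijection $R:\Lambda^2\to\Lambda^2$ from the proof of Lemma~\ref{l:sigmaijvssigmaji} (which recovers a row from its pair of entries in columns $i$ and $j$), realises the change as a product of $mn$ row transpositions $\bigl(R(v_l,s),R(v'_l,s)\bigr)$, giving the same parity shift $mn\equiv n\cdot\pi(\gamma)$. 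Your version avoids both the transposition decomposition and the bijection $R$, at the cost of invoking the (standard) fact that the parity of a product of disjoint permutations is the sum of their parities; the paper's version buys consistency with, and reuse of, the machinery already set up for Lemma~\ref{l:sigmaijvssigmaji}. Your treatment of the $i'\neq i$ case (both relevant columns untouched, so the two permutations are literally equal) matches the paper's one-line remark, correctly reading the hypothesis as $i'\notin\{i,j\}$.
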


\begin{proof}
If $\gamma$ is a permutation of $\Lambda^2$ which acts on the rows of
$A_1$, then $\sigma^{A_2}_{ij} = \sigma^{A_1}_{ij} \gamma$, which implies the
first statement. Similarly, if $\gamma$ is a permutation of $[1,k]$
which acts on the columns of $A_2$, then 
$\sigma^{A_1}_{ij}=\sigma^{A_2}_{\gamma(i)\gamma(j)}$. 
Now assume that $\gamma$ is a permutation of
$\Lambda$ that is applied to every entry in column $i$ of
$A_1=(a_{r\ell})$.  Write
$\gamma$ as a product of transpositions: 
$\gamma = (v_1,v'_1)(v_2,v'_2) \cdots (v_m, v'_m)$ 
where $m\ge0$ and $v_l \neq v'_l$ for $l \in [1,m]$. 
Let $R$ be the bijection from the proof of \lref{l:sigmaijvssigmaji}. Then
\[
\sigma^{A_2}_{ij}  
= \sigma^{A_1}_{i j}\prod_{l \in [1,m]} \prod_{s \in \Lambda} 
\big( R(v_l, s), R(v'_l, s) \big).
\]
Therefore, 
$\pi(\sigma^{A_2}_{ij})  = \pi(\sigma^{A_1}_{i j}) + mn   = 
\pi(\sigma^{A_1}_{ij}) + n \cdot \pi(\gamma)$ in $\Z_2$.  
The claim is obvious for $i' \neq i$.
\end{proof}




\subsection{Equivalence between $\tau$-parity and $\sigma$-parity}
\label{ss:equivalencetausigma}

In this subsection, we show that the $\sigma$-parity of an
orthogonal array determines its $\tau$-parity. Also, the converse
statement is true up to complementation.

Let $A=(a_{r\ell})$ be an $\OA(k,n)$ and fix distinct $c,i,j\in[1,k]$.  
Consider a
row $r$ of $A$ and let $x=a_{rc}$, $y=a_{ri}$ and $z=a_{rj}$.  The
permutation $\sigma_{cj}\sigma_{ci}^{-1}$ maps $(x,y)$ to $(x,z)$.
Thus the contribution to $\pi(\sigma_{cj}\sigma_{ci}^{-1})$ from the
rows in which the symbol $x$ occurs in column $c$ is precisely
$\pi(\rho_x(c,i,j))$. Summing over $x$, we find that
\begin{equation}\label{e:taufromsigmas}
\tau^c_{ij} = \pi(\sigma_{cj}\sigma_{ci}^{-1}) = \pi(\sigma_{ci}\sigma_{cj}).
\end{equation}
This demonstrates that $\sigma$-parity determines $\tau$-parity.

Note that \lref{l:sigmaparconj}(i) implies that if two rows of an
$\OA$ are interchanged, then every $\pi(\sigma_{ij})$ changes value. 
We call this {\it $\sigma$-complementation}.
However, interchanging two rows of an orthogonal array
$A$ does not change $\M(A)$, hence one would not expect the
parity to change.   Also, permuting the rows of an $\OA$
does not affect $\tau$-parity (see \lref{l:tauparconj}(i)), which
means there is no hope of recovering $\sigma$-parity from $\tau$-parity.
However, if some kind of standardisation is imposed to
choose between a $\sigma$-parity and its $\sigma$-complement, then
this standardised choice may be recovered from the $\tau$-parity.
We will use two forms of standardisation in this paper. The simplest
is just to insist that $\pi(\sigma_{12})=0$.
Under this convention, it is easy to recover the $\sigma$-parity
given the $\tau$-parity of an orthogonal array, as follows:
\begin{align}
\pi(\sigma_{12}) &= 0, \label{e:convent} \\
\pi(\sigma_{1j}) &= \tau_{2j}^1 && \text{for } j \geq 3, \label{e:sig1j} \\
\pi(\sigma_{2j}) &= \tau_{1j}^2 + {n \choose 2} && \text{for } j \geq 3, 
     \label{e:sig2j}\\
\pi(\sigma_{ij}) &=  \tau_{2i}^1 + \tau_{1j}^i + {n \choose 2} && 
\text{for } 3 \leq i < j. \label{e:sigij}
\end{align}
As usual, all parity equations are in $\Z_2$.
The first of these four equations, \eref{e:convent}, is our
standardisation. By~\eref{e:taufromsigmas}, 
$\tau_{2j}^1 = \pi(\sigma_{12} \sigma_{1j}) = \pi(\sigma_{12}) +
\pi(\sigma_{1j})$ when $j \geq 3$, giving \eref{e:sig1j}. Note that
$\tau_{2j}^1 + \tau_{1j}^i 
= \pi(\sigma_{12}\sigma_{1j}\sigma_{i1}\sigma_{ij})$.
When $i=2$ and $j \geq 3$, using \eref{e:convent} and \eref{e:sig1j} 
as well as the property~\eqref{e:sigmacommutativity}, we have 
\begin{align*}
\tau_{2j}^1 + \tau_{1j}^2 
&= \pi(\sigma_{1j}) + \pi(\sigma_{21}) + \pi(\sigma_{2j})\\
&= \tau_{2j}^1 + {n \choose 2} + \pi(\sigma_{2j}),
\end{align*}
which gives \eref{e:sig2j}. Similarly, if $i \geq 3$, we have 
\begin{align*}
\tau_{2j}^1 + \tau_{1j}^i 
&= \pi(\sigma_{1j}) + \pi(\sigma_{i1}) + \pi(\sigma_{ij}) \\
& = \tau_{2j}^1 + \tau_{2i}^1 + {n \choose 2} + \pi(\sigma_{ij}),
\end{align*}
which gives \eref{e:sigij}.

In several arguments towards the end of \sref{s:ensemble} it will be
convenient to use a form of standardisation other than
\eref{e:convent}. However, any method which decides whether to take a
$\sigma$-parity or its $\sigma$-complement will allow
$\sigma$-parity to be recovered from $\tau$-parity.  We can simply
find the answer determined by \eref{e:convent}--\eref{e:sigij} and 
then take the $\sigma$-complement or not, as required.

We can use the relationship between $\tau$-parity and $\sigma$-parity 
to generalise \eref{e:fundLSpar}, and give a simple proof.

\begin{lemma}\label{l:LStriangleparity}
Suppose $c_1, c_2, \dots, c_l \in [1,k]$ are distinct integers, where
$l\geq 3$.  For any $\OA(k,n)$ we have, in $\Z_2$,
\[ 
\tau^{c_1}_{c_lc_2} + \tau^{c_2}_{c_1c_3} + \cdots + 
\tau^{c_i}_{c_{i-1}c_{i+1}} + \cdots + \tau^{c_l}_{c_{l-1}c_1}  
= l {n \choose 2}. 
\]
\end{lemma}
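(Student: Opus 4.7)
The plan is to reduce the claim to a telescoping identity in $\sigma$-parities, using the two key tools already developed: the conversion formula \eref{e:taufromsigmas} and the index-swap relation \eref{e:sigmacommutativity}.

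First I would apply \eref{e:taufromsigmas} to each summand. Since parity is a homomorphism to $\Z_2$, this gives
\[
\tau^{c_i}_{c_{i-1}c_{i+1}} = \pi(\sigma_{c_ic_{i-1}}\sigma_{c_ic_{i+1}}) = \pi(\sigma_{c_ic_{i-1}}) + \pi(\sigma_{c_ic_{i+1}}),
\]
where the cyclic convention $c_0=c_l$ and $c_{l+1}=c_1$ is understood. Next I would use \lref{l:sigmaijvssigmaji} to turn every ``backward'' term into a ``forward'' term: $\pi(\sigma_{c_ic_{i-1}}) = \pi(\sigma_{c_{i-1}c_i}) + \binom{n}{2}$. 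Writing $p_i := \pi(\sigma_{c_ic_{i+1}})$ for $i\in[1,l]$ (again cyclically), the full sum becomes
\[
\sum_{i=1}^{l} \tau^{c_i}_{c_{i-1}c_{i+1}} = \sum_{i=1}^{l} \Bigl( p_{i-1} + \binom{n}{2} + p_i \Bigr) = 2\sum_{i=1}^{l} p_i + l\binom{n}{2}.
\]
In $\Z_2$ the first term vanishes, leaving $l\binom{n}{2}$, as required.

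There is no real obstacle here: the content is entirely packaged in \eref{e:taufromsigmas} and \eref{e:sigmacommutativity}, and the rest is a telescoping/cancellation argument in which each forward pair $\pi(\sigma_{c_ic_{i+1}})$ is counted exactly twice around the cycle. If desired, the identity can be re-derived without mentioning $\sigma$-parities at all by writing every $\tau^{c_i}_{c_{i-1}c_{i+1}}$ as $\pi(\sigma_{c_{i-1}c_i}\sigma_{c_ic_{i+1}}) + \binom{n}{2}$ and observing that the concatenated product $\prod_{i=1}^{l}\sigma_{c_{i-1}c_i}\sigma_{c_ic_{i+1}}$ telescopes to the identity; but the bookkeeping above is the cleanest route and also recovers \eref{e:fundLSpar} in the special case $k=3$, $l=3$.
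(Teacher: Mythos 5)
Your proposal is correct and follows essentially the same route as the paper: both expand each $\tau^{c_i}_{c_{i-1}c_{i+1}}$ via \eref{e:taufromsigmas} and then cancel using \eref{e:sigmacommutativity}, the only cosmetic difference being that the paper regroups the product as $\prod_i\pi(\sigma_{c_ic_{i+1}}\sigma_{c_{i+1}c_i})$ while you convert all backward terms to forward ones and observe each $p_i$ appears twice. No changes needed.
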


\begin{proof}
By~\eref{e:taufromsigmas} and \eref{e:sigmacommutativity},
\[
\tau^{c_1}_{c_lc_2} + 
\cdots + \tau^{c_i}_{c_{i-1}c_{i+1}} + \cdots + \tau^{c_l}_{c_{l-1}c_1} 
= \pi(\sigma_{c_1c_l}\sigma_{c_lc_1}) 
+\sum_{i\in[1,l-1]} \pi(\sigma_{c_i c_{i+1}} \sigma_{c_{i+1}c_i} ) 
= l {n \choose 2}. \qedhere 
\]
\end{proof}

Applying \lref{l:LStriangleparity} in the case where $l = 3$, we find
that for an $\OA(k,n)$ and three distinct integers $c,i,j \in [1,k]$
we have
\begin{equation}\label{e:Triple}
  \tau^c_{ij} + \tau^i_{cj} + \tau^j_{ci} = {n \choose 2}. 
\end{equation}
Of course, this is essentially a restatement of \eref{e:fundLSpar}.


\section{Graphs that model parities}\label{s:graphs}

We next describe graph theoretic interpretations for both our notions
of parity ($\tau$-parity and $\sigma$-parity). Our graphs will not
include loops or multiple edges, but will sometimes have directed
edges.  Given a graph or digraph $G$, we use $V(G)$ for the vertex set
of $G$ and $E(G)$ for the set of (possibly directed) edges.  For an
undirected graph $G$ we use $N_G(v) \subseteq V(G)$ to denote the
neighbourhood of $v$ in $G$, the set of vertices in $G$ which are
adjacent to a given vertex $v$.

The \textit{complement} of a undirected graph $G$, denoted
$\overline{G}$, is the graph obtained by replacing edges of $G$ with
non-edges and vice versa.  By \textit{switching} an undirected $G$ at
$v\in V(G)$ we obtain the graph, denoted $G^v$, which is equal to $G$
except that the neighbourhood of $v$ in $G^v$ is the complement of the
neighbourhood of $v$ in $G$. In other words, $N_{G^v}(v)=N_{\overline{G}}(v)$.

The \textit{reverse} of a digraph $G$, also denoted $\overline{G}$, is
the digraph obtained from $G$ by reversing the direction of every
edge.  By \textit{switching} a directed $G$ at $v\in V(G)$, we obtain
the digraph, denoted $G^v$, which is equal to $G$ except that the
direction of each edge incident with $v$ is reversed.  Note that we
use the same notation $\overline{G}$ and $G^v$, where the meaning is
determined by context, depending on whether $G$ is a undirected graph
or a digraph.

For a fixed initial digraph, the digraph obtained 
by applying a finite sequence of switchings and
reversals depends only on the parity of the number of switchings taken
at each vertex, and the parity of the number of reversals taken. It does
not depend on the order in which these operations are applied (see
\cite{MS13} for further details).  It is easy to check that analogous
properties hold in the context of undirected graphs as well.


\subsection{Graphs related to $\tau$-parity}

\begin{definition} 
Let $A$ be an $\OA(k,n)$.  For each $c \in [1,k]$, we define an
undirected graph $G_c$ with $V(G_c)=[1,k]$, where vertex $c$ is
isolated and $\{i,j\} \in E(G_c)$ if and only if $\tau_{ij}^c = 1$ for
$i,j\ne c$.  We call the graphs $G_1, \dots, G_k$ the
\textit{$\tau$-graphs} of $A$.
\end{definition}

We find it convenient to consider empty graphs to be complete
bipartite graphs (with one side of the graph having cardinality zero).

\begin{lemma}\label{l:taugraphbipartite}
Let $G_1, \dots, G_k$ be the $\tau$-graphs of an $\OA(k,n)$. Then, for
each $c \in [1,k]$, graph $G_c$ is the disjoint union of an isolated
vertex and a complete bipartite graph on $k-1$ vertices.
\end{lemma}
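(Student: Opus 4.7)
The plan is to show that each $\tau$-parity $\tau_{ij}^c$ (for $i,j \neq c$) can be written as a sum $f(i)+f(j)$ for some $f:[1,k]\setminus\{c\}\to\Z_2$; from this the complete bipartite structure on $[1,k]\setminus\{c\}$ is immediate, with the two parts being $f^{-1}(0)$ and $f^{-1}(1)$ (one of which may be empty, accounting for the convention just mentioned).

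Concretely, fix $c\in[1,k]$ and pick any $\ell\in[1,k]\setminus\{c\}$. Define $f:[1,k]\setminus\{c\}\to\Z_2$ by $f(\ell)=0$ and $f(i)=\tau_{i\ell}^c$ for $i\in[1,k]\setminus\{c,\ell\}$. Then for any two distinct $i,j\in[1,k]\setminus\{c,\ell\}$, equation \eref{e:Fixedcol} gives
\[
\tau_{ij}^c \;=\; \tau_{i\ell}^c + \tau_{\ell j}^c \;=\; f(i) + f(j),
\]
using \eref{e:tauidxcommutativity} to rewrite $\tau_{\ell j}^c=\tau_{j\ell}^c=f(j)$. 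For the remaining case, if one of the indices is $\ell$, say $j=\ell$ and $i\ne\ell$, then by definition $\tau_{i\ell}^c = f(i) = f(i)+f(\ell)$. Thus $\tau_{ij}^c=f(i)+f(j)$ holds for \emph{every} pair of distinct $i,j\in[1,k]\setminus\{c\}$.

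By the definition of $G_c$, the edge $\{i,j\}$ is present (for $i,j\ne c$) precisely when $\tau_{ij}^c=1$, i.e., precisely when $f(i)\ne f(j)$. Hence the induced subgraph of $G_c$ on $[1,k]\setminus\{c\}$ is the complete bipartite graph with parts $f^{-1}(0)$ and $f^{-1}(1)$, whose union has cardinality $k-1$. Since vertex $c$ is isolated in $G_c$ by construction, $G_c$ is the disjoint union of $\{c\}$ and a complete bipartite graph on $k-1$ vertices.

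There is no real obstacle here: the whole argument is a direct exploitation of the cocycle-like identity \eref{e:Fixedcol}, which already encodes the fact that $\tau_{ij}^c$ depends only on two ``potentials'' attached to $i$ and $j$. The only small subtlety is the degenerate case where $f$ is constant on $[1,k]\setminus\{c\}$, giving one empty part; this is handled by the convention that the empty graph counts as complete bipartite, as stated just before the lemma.
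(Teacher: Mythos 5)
Your proof is correct. It takes a recognizably different route from the paper's: both arguments rest on the identity \eref{e:Fixedcol}, but the paper first converts it into the combinatorial statement that every triple of vertices in $G_c\setminus\{c\}$ induces an even number of edges, and then runs a neighbourhood argument (fix a vertex $i$ with $V_1=N(i)\neq\emptyset$, show $V_1$ and its complement are independent and completely joined). You instead read \eref{e:Fixedcol} as a cocycle condition and explicitly trivialise it: fixing a base vertex $\ell$ and setting $f(i)=\tau^c_{i\ell}$ gives $\tau^c_{ij}=f(i)+f(j)$ for all pairs, so the edge set is exactly the complete bipartite graph between $f^{-1}(0)$ and $f^{-1}(1)$. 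Your bipartition $(f^{-1}(1),f^{-1}(0))=(N(\ell),\,V\setminus N(\ell))$ is essentially the one the paper arrives at, but your derivation is more algebraic and avoids both the case split on whether the graph has an edge and the triple-by-triple parity check; the degenerate case of an empty part is absorbed cleanly by the stated convention, since $\ell\in f^{-1}(0)$ guarantees at least one part is nonempty. The paper's version has the mild advantage of isolating the ``even number of edges in every triple'' property, which it reuses verbatim in the proof of \tref{t:stackedgraph}; your potential-function formulation would serve equally well there.
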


\begin{proof}
For some $c \in [1,k]$, let $G$ be $G_c$ with the isolated vertex $c$ removed.
If $G$ has no edges, then $G$ is 
$K_{0,k-1}$, so assume that $G$ has at least one edge. 
Let $i \in V(G)$ be such that $V_1=N_{G}(i) \neq \emptyset$. 
By~\eqref{e:Fixedcol}, there is an even number of edges 
between every triple of vertices in $G$, hence $V_1$  is an independent set.  
Now consider $V_2 = V(G) \backslash V_1$.  
For every $j \in V_2\setminus \{i\}$ and $\ell \in V_1$, we know that
$\{j,\ell\} \in E(G)$ because $j \not\in V_1=N_{G}(i)$ and there must be
an even number of edges induced by the vertices $\{i,j,\ell\}$. 
Finally, $V_2=N_G(\ell)$ for $\ell\in V_1$, 
so it is an independent set. Thus $G$ is a
complete bipartite graph with partite sets $V_1$ and $V_2$.
\end{proof}

We next state the analogue of \lref{l:tauparconj} for $\tau$-graphs.

\begin{lemma}\label{l:taugraphs} 
Let $A_1$ and $A_2$ be $\OA(k,n)$ and let $G_1, \dots, G_k$ be the
$\tau$-graphs of $A_1$ and $H_1, \dots, H_k$ be the $\tau$-graphs of $A_2$.
\begin{itemize}
\item[(i)] If $A_2$ is obtained by permuting the rows of $A_1$, then
$G_c$ is equal to $H_c$ for each $c \in [1,k]$.

\item[(ii)] If $A_2$ is obtained by permuting columns of $A_1$ by $\gamma$, 
then $\gamma$ is an isomorphism which maps $G_c$ to $H_{\gamma(c)}$  for all
$c \in [1,k]$.

\item[(iii)] Let $\gamma\in\sym_\Lambda$ and $c\in[1,k]$ and suppose
  $A_2$ is obtained by applying $\gamma$ to every entry in column $c$
  of $A_1$.  If $n$ is even or $\pi(\gamma)=0$, then $H_i=G_i$ for all
  $i \in [1,k]$. If $n$ is odd and $\pi(\gamma)=1$ then $H_c=G_c$ and,
  for $i\in[1,k]\setminus\{c\}$, we obtain $H_i$ by switching
  $G_i\backslash \{i\}$ at vertex $c$, with vertex $i$ remaining
  isolated.
\end{itemize}
\end{lemma}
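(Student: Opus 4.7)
The plan is to deduce each part directly from the corresponding part of \lref{l:tauparconj}, since by definition the edge set of $G_c$ just records the values $\tau^c_{ij}(A_1)$ (with $c$ isolated), and similarly $H_c$ records $\tau^c_{ij}(A_2)$. So the proof is essentially a translation of parity statements into edge-set statements.

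Part (i) is immediate: \lref{l:tauparconj}(i) gives $\tau^c_{ij}(A_2)=\tau^c_{ij}(A_1)$ for all admissible triples, so $G_c$ and $H_c$ have identical edge sets. For part (ii), \lref{l:tauparconj}(ii) yields $\tau^{\gamma(c)}_{\gamma(i)\gamma(j)}(A_2)=\tau^c_{ij}(A_1)$, which says that $\{i,j\}$ is an edge of $G_c$ if and only if $\{\gamma(i),\gamma(j)\}$ is an edge of $H_{\gamma(c)}$. Together with the observation that $\gamma$ sends the isolated vertex $c$ of $G_c$ to the isolated vertex $\gamma(c)$ of $H_{\gamma(c)}$, this exhibits $\gamma$ as the claimed isomorphism.

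For part (iii), I would read off the three equations in \lref{l:tauparconj}(iii) in turn. The first equation, $\tau^c_{ij}(A_2)=\tau^c_{ij}(A_1)$, gives $H_c=G_c$ immediately. The third equation, $\tau^d_{ij}(A_2)=\tau^d_{ij}(A_1)$ when $d\notin\{c,i,j\}$, says that in any $H_d$ with $d\ne c$, every edge $\{i,j\}$ with $c\notin\{i,j\}$ matches the corresponding edge in $G_d$. The remaining edges of $H_d$ (for $d\ne c$) are those incident to $c$, and by the second equation $\tau^d_{jc}(A_2)=\tau^d_{jc}(A_1)+n\cdot\pi(\gamma)$ in $\Z_2$, so each such edge toggles precisely when $n\cdot\pi(\gamma)\equiv1\pmod 2$. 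If $n$ is even or $\pi(\gamma)=0$, nothing toggles and $H_i=G_i$ for every $i$. If $n$ is odd and $\pi(\gamma)=1$, the toggled edges in $H_i$ (for $i\ne c$) are exactly the edges between $c$ and the vertices of $V(G_i)\setminus\{i,c\}$, which is the effect of switching $G_i\setminus\{i\}$ at $c$ while leaving $i$ isolated.

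There is no real obstacle here beyond careful bookkeeping; the only point requiring attention is to distinguish, inside each graph $H_d$ with $d\ne c$, the edges incident to $c$ (governed by the second equation of \lref{l:tauparconj}(iii)) from the edges with both endpoints in $[1,k]\setminus\{c,d\}$ (governed by the third).
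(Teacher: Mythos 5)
Your proof is correct and is exactly the intended argument: the paper states this lemma without proof, introducing it as ``the analogue of Lemma~\ref{l:tauparconj} for $\tau$-graphs,'' and your translation of the three parity equations of Lemma~\ref{l:tauparconj} into edge-set statements (in particular, distinguishing in part (iii) the edges of $H_d$ incident to $c$ from those avoiding $c$) is precisely the omitted bookkeeping. Nothing further is needed.
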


We remark that the only operation that may produce isomorphic $\OA$s
with non-isomorphic $\tau$-graphs is an odd permutation of the symbols
in a column when the alphabet size is odd. This results in switchings
at the vertex that corresponds to the column being permuted.

The $\tau$-graphs have the same vertex set, and by
\lref{l:taugraphbipartite}, they are a disjoint union of an isolated
vertex and a complete bipartite graph. Next we study what happens if
we superimpose them, modulo 2. Define the {\em stack} corresponding to
an $\OA(k,n)$ to be the undirected graph with vertices $[1,k]$ and an
edge $\{i,j\}$ if and only if $\sum_c\tau^c_{ij}\= 1\mod2$ (in other
words, the edge $\{i,j\}$ is present in an odd number of the
$\tau$-graphs for the $\OA$).  The following lemma is an interesting
observation about the stack corresponding to an $\OA(k,n)$.


\begin{theorem}\label{t:stackedgraph} 
Let $G$ be the stack for an $\OA(k,n)$. Then 
\begin{itemize}
\item[(i)] $G$ is a complete bipartite graph if $n \equiv 0,1 \mod{4}$, and
\item[(ii)] $G$ is a vertex disjoint union of at most two complete
  graphs if $n \equiv 2,3 \mod{4}$.
\end{itemize}
\end{theorem}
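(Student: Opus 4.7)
The plan is to convert the stack condition into $\sigma$-language, where each vertex gets a single bit, and then read off the graph structure from the two level sets of that bit. By~\eref{e:taufromsigmas} and the fact that $\pi$ is a group homomorphism into $\Z_2$, for distinct $c,i,j\in[1,k]$ we have
\[
\tau^c_{ij} \;=\; \pi(\sigma_{ci}\sigma_{cj}) \;=\; \pi(\sigma_{ci}) + \pi(\sigma_{cj}).
\]
For each $i\in[1,k]$, define the vertex invariant
\[
S_i \;=\; \sum_{c\in[1,k]\setminus\{i\}} \pi(\sigma_{ci}) \pmod 2.
\]
Any standardisation of $\sigma$-parity will do: $\sigma$-complementation changes every $\pi(\sigma_{ab})$ and therefore alters $S_i$ by $k-1$ for every $i$, so the partition of $[1,k]$ induced by $i\mapsto S_i$ is unchanged as an unordered partition.

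Next I would compute the edge condition. Write $s_{ij}=\sum_{c\in[1,k]\setminus\{i,j\}}\tau^c_{ij}$ for the quantity that decides edges of the stack $G$. Using the displayed identity above and then~\eref{e:sigmacommutativity},
\begin{align*}
s_{ij} &\;=\; \sum_{c\ne i,j}\pi(\sigma_{ci}) \;+\; \sum_{c\ne i,j}\pi(\sigma_{cj})\\
&\;=\; \bigl(S_i+\pi(\sigma_{ji})\bigr) \;+\; \bigl(S_j+\pi(\sigma_{ij})\bigr) \\
&\;=\; S_i + S_j + \binom{n}{2}.
\end{align*}
Hence $\{i,j\}$ is an edge of $G$ iff $S_i + S_j \equiv 1+\binom{n}{2}\pmod 2$. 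Let $A=\{i:S_i\equiv 0\}$ and $B=\{i:S_i\equiv 1\}$, so that $[1,k]=A\sqcup B$.

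When $n\equiv 0,1\pmod 4$, $\binom{n}{2}$ is even, so $\{i,j\}\in E(G)$ iff $S_i\ne S_j$; thus $G$ is the complete bipartite graph with parts $A$ and $B$ (possibly one of them empty, which is allowed by the convention preceding \lref{l:taugraphbipartite}), proving (i). When $n\equiv 2,3\pmod 4$, $\binom{n}{2}$ is odd, so $\{i,j\}\in E(G)$ iff $S_i=S_j$; thus $G$ is the vertex disjoint union of the cliques on $A$ and on $B$, proving (ii). The main labour has already been done in establishing the $\tau$--$\sigma$ dictionary in~\sref{ss:equivalencetausigma}; the only real step is recognising that, because $\pi$ is a homomorphism, the inner sum $\sum_c\tau^c_{ij}$ telescopes into a contribution depending on $i$ alone plus one depending on $j$ alone plus a $\binom{n}{2}$ correction, at which point both cases drop out of the parity of $\binom{n}{2}$.
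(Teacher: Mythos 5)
Your proof is correct, but it takes a genuinely different route from the paper. The paper proves the theorem by showing that any three vertices $i,j,\ell$ of the stack induce a number of edges congruent to $3\binom{n}{2}\pmod 2$ (computed by multiplying out all the relevant $\sigma_{ci}\sigma_{cj}$ factors and cancelling), and then argues structurally: an even triangle-count forces a complete bipartite graph, as in the proof of \lref{l:taugraphbipartite}, while an odd triangle-count forbids induced paths of length two and hence forces a disjoint union of at most two cliques. You instead produce an explicit vertex potential $S_i=\sum_{c\ne i}\pi(\sigma_{ci})$ and show the edge indicator equals $S_i+S_j+\binom{n}{2}$, from which the bipartition (or the two cliques) can be read off directly as the level sets of $S$; your remark that $\sigma$-complementation shifts every $S_i$ by the same amount $k-1$, so the unordered partition is well defined, correctly disposes of the only ambiguity. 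The two arguments are close cousins --- your formula immediately implies the paper's triangle identity, since $s_{ij}+s_{j\ell}+s_{\ell i}=2(S_i+S_j+S_\ell)+3\binom{n}{2}$ --- but yours buys an explicit description of the parts rather than just their existence, and it connects naturally to the quantity $\pi\bigl(\prod_{c\ne i}\sigma_{ci}\bigr)$ that the paper itself exploits later in \lref{l:sigmagraphdegrees} and \cyref{c:stackPP}. The paper's version, by contrast, reuses the ``even number of edges on every triple'' machinery already set up for the $\tau$-graphs, so it is the more economical presentation in context.
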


\begin{proof}
We consider the number of edges between any three distinct vertices 
$i,j,\ell \in [1,k]$. Working in $\Z_2$, we have,
\begin{align*}
 \sum\limits_{c \neq i,j} 
\tau^c_{ij} \, + \sum\limits_{ c \neq j,\ell} 
\tau^c_{j\ell} \, + \sum\limits_{c \neq \ell,i} 
\tau^c_{\ell i}  
&=  \pi \bigg( \prod_{c \neq i,j} 
\sigma_{ci} \sigma_{cj}  \,  \prod_{ c \neq j,\ell} 
\sigma_{cj} \sigma_{c\ell}  \,  \prod_{c \neq 
\ell,i} \sigma_{c\ell} \sigma_{ci}\bigg) && \mbox{by~\eref{e:taufromsigmas}} \\ 
 &= \pi \big( \sigma_{\ell i} \sigma_{\ell j} \, \sigma_{ij} \sigma_{i \ell} 
\, \sigma_{j\ell} \sigma_{j i}\big) && \\
 &= 3 {n \choose 2}. &&  \mbox{by~\eref{e:sigmacommutativity}.} 
\end{align*}
Therefore, when $n \equiv 0,1 \mod{4}$, there is an even number of edges 
between any three distinct vertices in $G$. Analogous to the proof of 
\lref{l:taugraphbipartite}, this implies that $G$ must be a complete 
bipartite graph. 

If $n \equiv 2,3 \mod{4}$, there is an odd number of edges between any
three distinct vertices in $G$.  In particular, there are no induced
paths of two edges. Since there cannot be two vertices at distance $2$
from each other, it follows that each component of $G$ is a complete
graph.  There are at most two components, since otherwise there would
be three vertices inducing a graph with no edges, but any three
vertices must induce an odd number of edges.
\end{proof}

In \cyref{c:stackPP} we give a stronger restriction on the stack
corresponding to an $\OA(n+1,n)$.


\subsection{Graphs related to $\sigma$-parity}

\begin{definition} 
Let $A$ be an $\OA(k,n)$.  The {\it $\sigma$-matrix} of $A$ is the
$k\times k$ matrix $M = (m_{ij})$ where
\begin{displaymath}
m_{ij} = \left\{ \begin{array}{ll}
1 & \textrm{if } i \ne j \textrm{ and } \pi(\sigma_{ij}) = 1, \\
0 & \textrm{otherwise.}
\end{array} \right.
\end{displaymath}  
The {\it $\sigma$-graph} of $A$, denoted by $\G(A)$, is a 
graph or digraph on $k$ vertices with adjacency matrix $M$. 
If $M$ is symmetric, then we interpret
$\G(A)$ to be an undirected graph. Otherwise, we interpret
$\G(A)$ to be a digraph.
\end{definition}

\begin{lemma}\label{l:Sym}
Let $A$ be an $\OA(k,n)$.  If $n\equiv 0,1 \mod{4}$, then the
$\sigma$-matrix is symmetric and the $\sigma$-graph is an
undirected graph.  If $n \equiv 2,3 \mod{4}$, then the $\sigma$-graph
is a tournament.
\end{lemma}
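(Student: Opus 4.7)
The plan is to read off both statements directly from \lref{l:sigmaijvssigmaji}, which gives $\pi(\sigma_{ji}) = \pi(\sigma_{ij}) + \binom{n}{2}$ in $\Z_2$, together with the elementary observation that $\binom{n}{2}$ is even exactly when $n \equiv 0,1 \mod 4$ and odd exactly when $n \equiv 2,3 \mod 4$.

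First I would handle the case $n \equiv 0,1 \mod 4$. Here $\binom{n}{2} \equiv 0$, so \eref{e:sigmacommutativity} gives $\pi(\sigma_{ji}) = \pi(\sigma_{ij})$ for all distinct $i,j \in [1,k]$. Consequently $m_{ij} = m_{ji}$, the $\sigma$-matrix $M$ is symmetric, and by the definition of $\G(A)$ the $\sigma$-graph is therefore undirected.

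Next I would handle $n \equiv 2,3 \mod 4$. Now $\binom{n}{2} \equiv 1$, so for any distinct $i,j \in [1,k]$ we have $\pi(\sigma_{ji}) = \pi(\sigma_{ij}) + 1$, i.e.\ exactly one of $\pi(\sigma_{ij}), \pi(\sigma_{ji})$ equals $1$ and the other equals $0$. Thus for every unordered pair $\{i,j\}$ of distinct vertices, exactly one of the entries $m_{ij}, m_{ji}$ equals $1$. Since the diagonal of $M$ is zero and $M$ is not symmetric (for any $k \geq 2$), the $\sigma$-graph is a digraph by definition, and the condition just established says it contains exactly one directed edge between each pair of distinct vertices; that is, $\G(A)$ is a tournament.

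There is no real obstacle here — both claims are immediate consequences of \lref{l:sigmaijvssigmaji} once one parses the cases of $\binom{n}{2} \mod 2$ — so the only care needed is to verify the small edge case $k=1$ is vacuous and to note the convention (set up just before the lemma) that $\G(A)$ is interpreted as undirected iff its adjacency matrix is symmetric, which justifies the dichotomy between ``undirected graph'' and ``tournament''.
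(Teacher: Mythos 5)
Your proof is correct and follows exactly the paper's own argument: apply \eref{e:sigmacommutativity} and split on the parity of $\binom{n}{2}$, which is even precisely when $n\equiv 0,1\mod 4$. The remarks about $k=1$ and about the convention for when $\G(A)$ is undirected are harmless but unnecessary, since the paper assumes $k\ge 3$ throughout.
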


\begin{proof}
By~\eqref{e:sigmacommutativity}, we have 
$\pi(\sigma_{ji}) = \pi(\sigma_{ij}) + {n \choose 2}$ for distinct
$i,j \in [1,k]$.  If $n \equiv 0,1 \mod{4}$, then $\sigma_{ij}$ and
$\sigma_{ji}$ have the same parity, so the $\sigma$-matrix of $A$ is
symmetric.  If $n \equiv 2,3 \mod{4}$, then $\sigma_{ij}$ and
$\sigma_{ji}$ have opposite parity, so exactly one of the directed
edges $(i,j)$ and $(j,i)$ is in $\G(A)$.  Hence $\G(A)$ is a tournament.
\end{proof}

We next state the analogue of \lref{l:sigmaparconj} for
$\sigma$-graphs.  Glynn and Byatt \cite[Lem.\,2.4]{GB12} showed that
$\sigma$-graphs of orthogonal arrays $\OA(k,n)$ for $n$ even are
invariant (up to complementation) for isomorphic orthogonal arrays.
Here we consider $n$ odd as well. The proof is essentially the
same as for the even case, so we omit it.

\begin{lemma} \label{l:sigmagraphs}
Let $A_1$ and $A_2$ be $\OA(k,n)$ and let $\G_1=\G(A_1)$ and $\G_2=\G(A_2)$. 
\begin{itemize}
\item[(i)] If $A_2$ is obtained by permuting the rows of $A_1$ by
  $\gamma$, then $\G_2$ is either $\G_1$ if $\gamma$ is an even
  permutation, or $\overline{\G_1}$ if $\gamma$ is an odd permutation.

\item[(ii)] If $A_2$ is obtained by permuting the columns of $A_1$ by
  $\gamma$, then $\gamma$ is a graph isomorphism which maps $\G_1$ to $\G_2$.

\item[(iii)] Let $\gamma\in\sym_\Lambda$ and $i \in[1,k]$ and suppose
  $A_2$ is obtained by applying $\gamma$ to every entry in column $i$
  of $A_1$.  Then $\G_2 = \G_1$ if $\pi(\gamma)=0$ or
  $n$ is even, and $\G_2 = \G_1^{i}$ if $\pi(\gamma)=1$ 
  and $n$ is odd.
\end{itemize}
\end{lemma}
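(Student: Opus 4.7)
The plan is to translate \lref{l:sigmaparconj} directly into the language of graphs, using \lref{l:Sym} to handle the undirected/tournament dichotomy. Since the $\sigma$-graph is determined (via its adjacency matrix) by the parities $\pi(\sigma_{ij})$, each part reduces to checking how entries of the $\sigma$-matrix transform and recognising the result as one of the graph operations defined in \sref{s:graphs}.

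For part (i), \lref{l:sigmaparconj}(i) gives $\pi(\sigma^{A_2}_{ij}) = \pi(\sigma^{A_1}_{ij}) + \pi(\gamma)$ for every distinct $i,j$. If $\pi(\gamma)=0$ the two $\sigma$-matrices coincide, so $\G_2=\G_1$. If $\pi(\gamma)=1$ every off-diagonal entry of the $\sigma$-matrix flips; when $n \equiv 0,1 \mod{4}$ the matrix is symmetric so this is complementation of an undirected graph, while when $n \equiv 2,3 \mod{4}$ flipping both $(i,j)$ and $(j,i)$ entries of the tournament exchanges the unique arc between $i$ and $j$, which is arc reversal. In both cases the result matches the definition of $\overline{\G_1}$. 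Part (ii) is immediate from \lref{l:sigmaparconj}(ii): the identity $\pi(\sigma^{A_2}_{\gamma(i)\gamma(j)}) = \pi(\sigma^{A_1}_{ij})$ says that relabelling the vertices of $\G_1$ by $\gamma$ produces $\G_2$, so $\gamma$ is a (di)graph isomorphism.

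For part (iii), \lref{l:sigmaparconj}(iii) tells us that applying $\gamma$ to column $i$ shifts $\pi(\sigma_{ij})$ by $n\cdot\pi(\gamma)$ for every $j \neq i$, and leaves $\pi(\sigma_{i'j'})$ unchanged when $i',j' \neq i$. An entirely analogous calculation, composing $\sigma_{ji}$ on the left with the permutation $(u,v) \mapsto (u,\gamma(v))$ of $\Lambda^2$ (whose parity is again $n\cdot\pi(\gamma)$, since each transposition in $\gamma$ produces $n$ disjoint transpositions on $\Lambda^2$), shows that $\pi(\sigma_{ji})$ for $j \neq i$ also shifts by $n\cdot\pi(\gamma)$. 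Hence the parities that change are exactly those of pairs incident to vertex $i$, and each shifts by the same amount. If $n$ is even or $\pi(\gamma)=0$ then $n\cdot\pi(\gamma)\equiv 0 \mod{2}$ and $\G_2=\G_1$; otherwise every adjacency incident to $i$ flips while non-incident adjacencies are preserved, which is exactly undirected switching at $i$ when $n \equiv 1 \mod{4}$ and directed switching (reversal of every arc through $i$) when $n \equiv 3 \mod{4}$. Either interpretation matches $\G_1^i$.

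The only real obstacle is bookkeeping, namely maintaining the correspondence between entrywise flips of the $\sigma$-matrix and the two distinct graph operations they induce (complementation versus reversal in part (i); undirected versus directed switching in part (iii)), and also noticing that \lref{l:sigmaparconj}(iii) as stated only covers parities $\pi(\sigma^{A_2}_{ij})$ with $i$ as the first index, so the symmetric shift for $\pi(\sigma^{A_2}_{ji})$ must be derived by an analogous argument before the graph-theoretic conclusion is complete. With that correspondence established, the proof parallels Glynn and Byatt's even-$n$ argument.
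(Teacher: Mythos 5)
Your proposal is correct and follows exactly the route the paper intends: the paper omits this proof, stating it is essentially the same as Glynn and Byatt's even-$n$ argument, namely a direct translation of Lemma~\ref{l:sigmaparconj} into the graph operations of Section~\ref{s:graphs}. You also correctly identify and repair the one genuine subtlety, that Lemma~\ref{l:sigmaparconj}(iii) as stated does not literally cover $\pi(\sigma_{ji})$ for $j\neq i$, and your computation of the parity $n\cdot\pi(\gamma)$ for the induced permutation of $\Lambda^2$ (which could alternatively be obtained from Lemma~\ref{l:sigmaijvssigmaji}) closes that gap.
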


We see in \lref{l:sigmagraphs} that taking a conjugate of a
set of MOLS only affects the order of columns of the orthogonal
arrays, and hence yields an isomorphic $\sigma$-graph. On the other
hand, the $\sigma$-graphs of isotopic sets of MOLS are not necessarily
isomorphic if $n$ is odd, because isotopisms can cause switchings
at some vertices.

\section{How many parities are there?}\label{s:numpar}

If $A$ is an $\OA(k,n)$ then there is a corresponding $\tau$-parity
$\tau(A)$ of dimension $k(k-1)(k-2)$ which stores the
parities $\tau^c_{ij}$, indexed by the triples $(c,i,j)$.  In this
section we are interested in the number of different $\tau$-parities
that are possible. One way to measure this is to consider $\dim(k,n)$,
which we define as $\log_2$ of the number of different $\tau$-parities
achieved by orthogonal arrays $\OA(k,n)$. We have already determined
a number of relationships between components of the $\tau$-parity,
including \eqref{e:tauidxcommutativity}, \eqref{e:Fixedcol} and
\eqref{e:Triple}.  Studying $\dim(k,n)$ is one way to determine
whether there are other relationships waiting to be discovered. We
will be able to resolve this question in some cases, but it will
remain open in general.  Another way to view $\dim(k,n)$ is that it is
the information content (in number of bits) of $\tau(A)$ for an
$\OA(k,n)$.

For a given value of $(k,n)$, we call any $\Z_2$ vector satisfying
\eqref{e:tauidxcommutativity}, \eqref{e:Fixedcol} and \eqref{e:Triple}
a {\it plausible $\tau$-parity}.  We call it an {\it actual
  $\tau$-parity} if there is an $\OA(k,n)$ that achieves it. The set
of plausible $\tau$-parities depends only on the value of $n\mod4$ and
on $k$.


\subsection{Switching classes}\label{ss:swclass}

Once we have an actual $\tau$-parity, there will be many other actual
$\tau$-parities that we can establish by taking isomorphic orthogonal
arrays.  For a plausible $\tau$-parity $p$, the \textit{switching
  class} of $p$ is the set of all plausible $\tau$-parities obtained
from $p$ by some sequence of the following operations. These
operations are analogues of (ii) and (iii) from \lref{l:tauparconj},
and will map any actual $\tau$-parity to another actual $\tau$-parity.

\textbf{Permuting Operation:} \
For a permutation $\gamma \in\sym_k$, a new $\tau$-parity is
obtained by {\it permuting by $\gamma$} as follows:
we move each $\tau_{ij}^c$ from the coordinate indexed by $(c,i,j)$ to
the coordinate indexed by $\big(\gamma(c),\gamma(i),\gamma(j)\big)$.

\textbf{Swapping Operation:} \ 
If $n$ is odd then a new $\tau$-parity is obtained by {\em swapping at
  a subset $C \subseteq [1,k]$}, which means for each $\tau_{ij}^c$, we
change its value if and only if $|\{i,j\} \cap C| = 1$.

We stress that the swapping operation is only available if $n$ is 
odd.  

\begin{theorem}\label{t:switchsizes}
Let $k \geq 3$ and $n \geq 2$.  Then each switching class for
parameters $(k,n)$ has size which divides $k!$ if $n$ is even and
$k!\,2^{k-1}$ if $n$ is odd.
\end{theorem}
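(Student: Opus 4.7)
The strategy is to realise the permuting and swapping operations as a finite group $G$ acting on the set of plausible $\tau$-parities for parameters $(k,n)$, so that each switching class is precisely a $G$-orbit. Once this is set up, the orbit-stabilizer theorem immediately yields that every switching class has size dividing $|G|$.

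When $n$ is even, only the permuting operations are available. It is immediate from the definition that permuting by $\gamma_2$ after $\gamma_1$ coincides with permuting by $\gamma_2\gamma_1$, so these operations give an $S_k$-action on plausible $\tau$-parities. This yields divisibility by $k!$ and finishes the even case.

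When $n$ is odd, I would also incorporate the swapping operations, checking three key facts: (a) swapping at $C$ is an involution; (b) swapping at $C$ followed by swapping at $C'$ equals swapping at the symmetric difference $C\triangle C'$, because the changes add coordinate-wise modulo $2$; and (c) swapping at $C$ and at its complement $[1,k]\setminus C$ induce the same operation, since for distinct $i,j$ we have $|\{i,j\}\cap C|+|\{i,j\}\cap C^c|=2$, forcing one parity to be $1$ iff the other is. From (a) and (b), the swapping operations form an elementary abelian $2$-group, and by (c) they act through a faithful quotient of order $2^{k-1}$. Finally, conjugating a swap at $C$ by $\gamma\in S_k$ produces the swap at $\gamma(C)$, and since $S_k$ preserves the relation $C\sim C^c$ this action descends to the quotient; the full group of operations is thus the semidirect product $(\Z_2)^{k-1}\rtimes S_k$ of order $k!\,2^{k-1}$, and orbit-stabilizer closes the argument. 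The only subtle point is observation (c); without the coincidence of $C$ with $C^c$ the bound would be loose by a factor of $2$. A preliminary sanity check that both types of operation preserve \eqref{e:tauidxcommutativity}, \eqref{e:Fixedcol} and \eqref{e:Triple} (a short computation using indicator functions) confirms that the action really does live on the set of plausible $\tau$-parities.
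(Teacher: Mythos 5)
Your proposal is correct and follows essentially the same route as the paper: realise the operations as an action of $\sym_k$ (for $n$ even) or of $\sym_2^{k-1}\rtimes\sym_k$ (for $n$ odd) on the plausible $\tau$-parities, with the key observation that swapping at $C$ and at its complement coincide, and conclude by orbit--stabilizer. The paper removes the $C\sim C^c$ duplication by marking an index rather than passing to a quotient group, but this is only a cosmetic difference.
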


\begin{proof}
First, suppose that $n$ is even.  The group $\sym_k$ acting on the
labels $c,i,j$, induces an action on the $\tau$-parities.  The
switching classes are the orbits of this action. Hence
the result follows.

The situation for odd $n$ is similar, except that we have the option
to swap at any subset $C\subseteq[1,k]$. However, swapping at $C$ has
the same effect as swapping at the complement of $C$.  We can remove
this duplication by marking one index in $[1,k]$, and never swapping
at the marked index (the mark gets moved when we permute indices). In
this way we find an action of the group $\sym_2^{k-1} \rtimes \sym_k$
on $\tau$-parities.  This group has order $k!\,2^{k-1}$ and its orbits
are the switching classes.
\end{proof}

We remark that $\sym_2^{k-1} \rtimes \sym_k$ is the automorphism group of 
the folded $k$-cube (at least when $k>4$), see \cite[pp.\,264--265]{BCN89}.

For $k\in[3,8]$ and for each possible value of $n\mod4$, we calculated
the number of switching classes and the size of each switching class
for every plausible $\tau$-parity.  The results are given in
\Tref{T:switchclass}. Instances that achieve the bounds in
\tref{t:switchsizes} are shown in {\bf bold}.  Note that for odd $n$
the bounds are not achieved until $k=8$.

\begin{table}[h!]
\centering
\footnotesize
\begin{tabular}{|c|c|l|c|l|}
\hline
$k$ & \multicolumn{2}{c|}{$n \equiv 0 \mod{4}$} &  
      \multicolumn{2}{c|}{$n \equiv 2 \mod{4}$} \\[0.5ex]
 & \# & sizes & \# & sizes   \\
\hline
3 &2& $ 1 , 3 $  &2&  $ 1 , 3 $  \\[1ex]
4 &6& $ 1 , 3 , 4 , 6 , 12 $ &3& $ 8 , 12 $ \\[1ex] 
5 &18& $ 1 , 5 , 6 , 10 , 15 , 20 , 30 , 60 $  &10&  $ 12 , 20 , 40 , 60 , {\bf 120} $\\[1ex]
6 &78& $ 1 , 6 , 10 , 15 , 20 , 30 , 45 , 60 , 72 , 90 , 120 , 180 , 360 , {\bf 720}
$ &34& $ 40 , 120 , 144 , 240 , 360 , {\bf 720} $ \\[1ex]
7 &522& $ 1 , 7 , 21 , 35 , 42 , 70 , 105 , 140 , 210 , 252 , 315 , 360 , 420 , $ 
 &272& $ 120 , 280 , 360 , 504 , 560 , 840 ,  $ \\
 && $504 , 630 , 840 , 1260 , 2520 , {\bf 5040}$ 
 && $1008 , 1680 , 2520 , {\bf 5040}$\\[0.5ex]
8&6178& $1,8,28,35,56,70,105,168,210,280,315,336,$&3528&$1920,2240,2688,4480,$\\
 && $420,560,630,672,840,1120,1260,1680,2016,2520,$&&$5760,6720,8064,13440,$\\
 && $2880,3360,4032,5040,6720,10080,20160,{\bf40320}$&&$20160,{\bf40320}$\\
\hline
\end{tabular}

\medskip

\footnotesize
\begin{tabular}{|c|c|l|c|l|}
\hline
$k$ & \multicolumn{2}{c|}{$n \equiv 1 \mod{4}$} &  
      \multicolumn{2}{c|}{$n \equiv 3 \mod{4}$} \\[0.5ex]
 & \# & sizes & \# & sizes   \\
\hline
3 &1&  $ 4 $ &1& $ 4 $ \\[1ex]
4 &2&  $ 8 , 24 $ &2& $ 8 , 24 $\\[1ex]
5 &4&  $ 16 , 96 , 160 , 240 $ &2& $ 192 , 320 $ \\[1ex]
6 &10&  $ 32 , 192 , 320 , 480 , 1440 , 1920 , 2880 , 5760 $
 &6& $ 640 , 1920 , 2304 , 3840 , 5760 $ \\[1ex]
7 &27&  $ 64 , 1344 , 2240 , 4480 , 6720 , 13440 , 16128 , 20160 ,$ &12& $ 7680 , 
17920 , 23040 , $ \\
 && $ 23040 , 26880, 40320 , 53760 , 80640 , 161280$ && $ 32256 , 53760 , 
161280$\\[1ex]
8 &131& $ 128, 3584, 4480, 7168, 13440, 21504, 26880, 35840,$ &69& $15360, 143360, 172032, 215040,$ \\
&& $ 40320, 53760, 71680, 86016, 107520, 161280, 215040, $ && $286720, 322560, 368640, 430080, $
\\
&& $258048, 322560, 368640, 430080, 645120, 860160,$ && $516096, 645120, 860160, 1290240,$ \\
&& $ 1290240, 2580480, {\bf 5160960}$ && $1720320, 2580480, {\bf5160960}$ \\[0.5ex]
\hline
\end{tabular}
\caption{\label{T:switchclass}Number and sizes of switching classes.}
\end{table}

Each plausible $\tau$-parity corresponds to a $\sigma$-graph up to
complementation, and vice versa. For even $n$, a switching class is
defined by the permuting operation, which simply applies an
isomorphism to the $\sigma$-graph. It follows that the number of
switching classes for $(k,n)$ when $n\= 0\mod4$ is the number of
complementary pairs of graphs on $k$ nodes (sequence A007869 in
\cite{OEIS}). Similarly, the number of switching classes for $(k,n)$
when $n\= 2\mod4$ is the number of complementary pairs of tournaments
on $k$ nodes (sequence A059735 in \cite{OEIS}).


Up to isotopism there are 19 complete sets of MOLS of order 9, falling
into 7 isomorphism classes of $\OA(10,9)$.  For each complete set $M$
of MOLS, we constructed $\A(M)$, the corresponding $\OA(10,9)$. We then
determined its $\tau$-parity, and the switching class it belongs to.
The results are shown in \Tref{TableIsom}. Sets of MOLS are numbered
using the numbering in \cite{handbook}. Set number 1 corresponds to
the Desarguesian plane, sets 2 to 6 correspond to the Hall plane, sets
7 to 11 correspond to the dual Hall plane, and sets 12 to 19
correspond to the Hughes plane.
It turns out that only 4 different switching classes are represented
by the 19 complete sets. These 4 switching classes do {\em not}
correspond to the 4 projective planes. In fact one of the switching
classes contain OAs from three different projective planes, and
another contains OAs from two different projective planes, as indicated
in the last column of \Tref{TableIsom}.  Clearly,
not all OAs from a given plane end up in the same switching class.

\begin{table}[h!]
\begin{center}
\begin{tabular}{ |c|c|l|c| } 
 \hline
 Switching Class & Size of class
& Sets of $8$ MOLS(9) in the class 
& \# Projective planes\\
 \hline
 Class 1 & 1290240 & 1 & 1 \\ 
 Class 2 & 483840 & 2,3,10,11 & 2\\ 
 Class 3 & 512 & 4,5,6,7,8,9,17,18,19 & 3 \\
 Class 4 & 1075200  & 12,13,14,15,16 & 1\\
 \hline
\end{tabular}
\caption{\label{TableIsom}Switching classes of complete sets of MOLS(9)}
\end{center}
\end{table}

\Tref{TableIsom} also shows the size of each switching class and here
it is slightly surprising that the most symmetric plane (the
Desarguesian one) produces OAs in the largest switching class. In fact
the smallest switching class contains the zero vector.  Each of the
non-Desarguesian planes have OAs which achieve this zero vector, but
the Desarguesian plane does not.

\subsection{Achieving all plausible $\tau$-parities}\label{ss:plausisactual}

We next consider the question of how many plausible $\tau$-parities there
are. After that we will consider how many of them are actual $\tau$-parities.

\begin{theorem}\label{t:sigmaplausible}
There are $2^{{k\choose2}-1}$ plausible $\tau$-parities for an
$\OA(k,n)$, because each standardised $\sigma$-parity corresponds to a
different plausible $\tau$-parity.
\end{theorem}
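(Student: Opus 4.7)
The plan is to exhibit an explicit bijection $\Phi$ from the set of standardised $\sigma$-parities to the set of plausible $\tau$-parities for $(k,n)$, then read off the count.

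First I would count the domain. A $\sigma$-parity is specified by $\pi(\sigma_{ij})$ for ordered distinct pairs $(i,j)$, but \eref{e:sigmacommutativity} determines $\pi(\sigma_{ji})$ from $\pi(\sigma_{ij})$; the standardisation $\pi(\sigma_{12})=0$ then fixes one of the $\binom{k}{2}$ remaining bits, leaving $2^{\binom{k}{2}-1}$ choices.

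Imitating \eref{e:taufromsigmas} and using the fact that $\pi$ is a $\Z_2$-homomorphism, I would define $\Phi$ by $\tau^c_{ij}:=\pi(\sigma_{ci})+\pi(\sigma_{cj})$ and verify plausibility: symmetry \eref{e:tauidxcommutativity} and the cocycle relation \eref{e:Fixedcol} are immediate from $\Z_2$-linearity, and the triple relation \eref{e:Triple} follows by expanding the six summands of $\tau^c_{ij}+\tau^i_{cj}+\tau^j_{ci}$ and applying \eref{e:sigmacommutativity} three times to collapse them to $3\binom{n}{2}\equiv\binom{n}{2}$ in $\Z_2$. Injectivity of $\Phi$ is then free, because the relations \eref{e:convent}--\eref{e:sigij} express the standardised $\sigma$-parity in terms of the $\tau$-parity and thus define an explicit left inverse $\Psi$.

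The main obstacle is surjectivity of $\Phi$, which supplies the matching upper bound. Given a plausible $\tau$-parity $p$, I would let $s:=\Psi(p)$ and verify $\Phi(s)=p$, i.e., $p^c_{ij}=\pi(\sigma_{ci})+\pi(\sigma_{cj})$ for every distinct triple $c,i,j\in[1,k]$. Since $\Psi$ handles indices $1$ and $2$ specially, this requires a case split on how $\{c,i,j\}$ meets $\{1,2\}$. When $\{c,i\}=\{1,2\}$ or $\{c,j\}=\{1,2\}$ the identity is essentially the definition in \eref{e:convent}--\eref{e:sig2j}; when $c\geq 3$ and $1\in\{i,j\}$ it follows from \eref{e:sigij} and \eref{e:sigmacommutativity}; and the remaining cases (either $c\in\{1,2\}$ with $i,j\geq 3$, or $c,i,j$ all $\geq 3$) reduce to the previous ones by applying \eref{e:Fixedcol} to rewrite $p^c_{ij}$ through the reference indices $1$ or $2$, with \eref{e:Triple} closing up the final all-large-index case. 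Once these identities are checked, $\Phi$ is bijective and the count $2^{\binom{k}{2}-1}$ follows immediately.
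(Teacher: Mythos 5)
Your proposal is correct and follows essentially the same route as the paper: count the $2^{\binom{k}{2}-1}$ standardised $\sigma$-parities, push each forward to a $\tau$-parity via \eref{e:taufromsigmas}, verify \eqref{e:tauidxcommutativity}, \eqref{e:Fixedcol} and \eqref{e:Triple} using \eref{e:sigmacommutativity}, and get injectivity from the recovery formulas \eref{e:convent}--\eref{e:sigij}. The only difference is that you spell out the surjectivity check (that $\Phi\circ\Psi$ is the identity on plausible $\tau$-parities), a point the paper leaves implicit in its appeal to \sref{ss:equivalencetausigma}.
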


\begin{proof}
We choose the standardisation \eref{e:convent}.  By
\sref{ss:equivalencetausigma} we know that each standardised
$\sigma$-parity determines a different $\tau$-parity, so we consider
the number of options for the $\sigma$-matrix.  Our choice for
$\pi(\sigma_{12})$ is fixed but we can freely pick $\pi(\sigma_{ij})$
for all other $1\le i<j\le k$. This gives us $2^{{k\choose2}-1}$
options, but we have to check that all of them produce plausible
$\tau$-parities.  Suppose we have settled on values for
$\pi(\sigma_{ij})$ for $1\le i<j\le k$. We then determine
$\pi(\sigma_{ij})$ for $1\le j<i\le k$ from
\eref{e:sigmacommutativity}, and $\tau^c_{ij}$ for all distinct
$c,i,j\in[1,n+1]$ from \eref{e:taufromsigmas}.

It is straightforward to check 
\eqref{e:tauidxcommutativity} and \eqref{e:Fixedcol} hold.  
Also, in $\Z_2$,
\begin{align*}
\tau^c_{ij} + \tau^i_{cj} + \tau^j_{ci} 
&= \pi(\sigma_{ci}) + \pi(\sigma_{cj}) + \pi(\sigma_{ic}) + \pi(\sigma_{ij}) 
+ \pi(\sigma_{jc}) + \pi(\sigma_{ji}) && \textrm{ by \eqref{e:taufromsigmas}}\\
&=  3 {n \choose 2}, && \textrm{ by \eqref{e:sigmacommutativity}} 
\end{align*}
and hence \eqref{e:Triple} is satisfied.
\end{proof}

\begin{corollary}\label{c:dim}
We have $\dim(k,n)\le { {k \choose 2} -1}$ for $k \geq 3$ and arbitrary $n$.
\end{corollary}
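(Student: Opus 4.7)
The plan is to observe that \cyref{c:dim} is an immediate consequence of \tref{t:sigmaplausible} together with the definitions. Recall that $\dim(k,n)$ is defined as $\log_2$ of the number of distinct $\tau$-parities that are realised by some $\OA(k,n)$, i.e.\ the number of actual $\tau$-parities. Every actual $\tau$-parity arises from some $\OA(k,n)$, and hence satisfies the relations \eqref{e:tauidxcommutativity}, \eqref{e:Fixedcol} and \eqref{e:Triple}, which were established (for all $k$ and $n$) earlier in \sref{s:definitions}. Therefore every actual $\tau$-parity is in particular a plausible $\tau$-parity.

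By \tref{t:sigmaplausible}, the total number of plausible $\tau$-parities for $(k,n)$ is exactly $2^{\binom{k}{2}-1}$. Combining these two facts, the number of actual $\tau$-parities is at most $2^{\binom{k}{2}-1}$, and taking $\log_2$ gives $\dim(k,n) \le \binom{k}{2}-1$, as claimed.

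There is no real obstacle here; the work has all been done in proving \tref{t:sigmaplausible}. The only thing worth being careful about is to note explicitly that the relations used to define ``plausible'' are consequences of the defining properties of an $\OA$, and were verified to hold for every $\OA(k,n)$ (not merely on a case-by-case basis in $n\bmod 4$), so that the bound holds uniformly in $n$.
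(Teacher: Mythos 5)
Your proposal is correct and is exactly the argument the paper intends: the corollary is stated without a separate proof because, as you observe, every actual $\tau$-parity satisfies \eqref{e:tauidxcommutativity}, \eqref{e:Fixedcol} and \eqref{e:Triple} and is therefore plausible, so \tref{t:sigmaplausible} immediately bounds the count by $2^{\binom{k}{2}-1}$. Your extra remark that the defining relations hold for every $\OA(k,n)$ uniformly in $n$ is a sensible point of care but introduces nothing beyond the paper's route.
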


In \cyref{c:dimPP} we find a better upper bound on $\dim(k,n)$ in the
case when $k = n+1$.

We now know there are $2^{ {k \choose 2} -1}$ plausible $\tau$-parities.  
How many of these are actual $\tau$-parities?  In
\Tref{T:actualparvec}, we list some values of $(k,n)$ for which all
plausible $\tau$-parities are actual.  Examples
that justify each claim made in the table can be downloaded from
\cite{WWWW}.  This table is exhaustive for $n\le9$, where a complete
catalogue of MOLS is known \cite{EW16,WWWW}. For $n\ge10$ we used
ad-hoc methods to find our specimens, so no inference should be
made from an entry not appearing.

\begin{table}[h!]
\centering
\begin{tabular}{|c|c|c|c|c|}
\hline
$n \mod{4}$ & $k=3$ & $k=4$ & $k=5$\\
\hline
0 & 8  & 8   &  16   \\
1 & $5,9$  & 9   &  9        \\
2 & 6  & $10,14,18$	&     \\
3 & $3,7$  & 7   & $11,19,23,\dots$ \\
\hline
\end{tabular}
\caption{\label{T:actualparvec}Values of $(k,n)$ for which all
plausible $\tau$-parities are achieved.}
\end{table}

As \Tref{T:actualparvec} shows, we constructed explicit examples which
achieve all plausible $\tau$-parities for $k\in\{3,4,5\}$ and each
congruence class of $n$ mod 4 (except for $n \equiv 2$ mod {4} when
$k=5$). This exception is not a genuine one, since it will follow from
\cyref{c:actualNsufflarge} below that all plausible $\tau$-parities
are actual for all sufficiently large $n$ when $k=5$.

Note that $n=9$ is the smallest order which achieves all plausible
$\tau$-parities for $k=5$. Among the $\OA(6,9)$, there are 13312 actual
$\tau$-parities, and 16384 plausible $\tau$-parities.  The entry for
$n\equiv3\mod4$ and $k=5$ in \Tref{T:actualparvec} is justified by our
next result. In it we construct an infinite family of orthogonal arrays
which achieve all plausible $\tau$-parities when
$k=5$ (and hence also for smaller $k$). 

\begin{theorem}\label{t:InfiniteFamAllPar} 
For prime $n \geq 11$ satisfying $n \equiv 3 \mod{4}$ there exist
examples of $\OA(5,n)$ which achieve all $512$ plausible $\tau$-parities.
\end{theorem}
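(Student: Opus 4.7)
The plan is to use the standard linear MOLS construction over $\mathbb{F}_n$ and exploit a two-dimensional generalisation of Zolotarev's lemma to compute $\sigma$-parities in terms of Legendre symbols; by \sref{ss:equivalencetausigma} this gives the $\tau$-parity. For each $\beta\in\mathbb{F}_n\cup\{\infty\}$ define $\psi_\beta:\mathbb{F}_n^2\to\mathbb{F}_n$ by $\psi_\infty(x,y)=x$ and $\psi_\beta(x,y)=y-\beta x$ otherwise. Any choice of five distinct slopes $\beta_1,\dots,\beta_5$ produces an $\OA(5,n)$ whose $\ell$th column records $\psi_{\beta_\ell}(x,y)$.

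For such a linear $\OA$, every $\sigma_{ij}$ is a linear bijection of $\mathbb{F}_n^2$ whose determinant is $\beta_j-\beta_i$ if both slopes are finite and $\pm 1$ otherwise. Because $SL_2(\mathbb{F}_n)$ is generated by transvections, each of which fixes a line pointwise and permutes every other affine line by a full translation (an $n$-cycle, hence an even permutation for odd $n$), the parity homomorphism $GL_2(\mathbb{F}_n)\to\Z_2$ factors through the determinant; evaluating on $\mathrm{diag}(a,1)$, which acts as $n$ independent copies of multiplication by $a$ on $\mathbb{F}_n$ and so has parity $\bigl(\tfrac{a}{n}\bigr)$ by the classical Zolotarev lemma, identifies this parity with the Legendre symbol, so $\pi(\sigma_{ij})=\bigl(\tfrac{\det\sigma_{ij}}{n}\bigr)$ in $\Z_2$.

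Now fix $\beta_1=\infty$ and $\beta_2=0$ and take $(\beta_3,\beta_4,\beta_5)=(t_1,t_2,t_3)$ over distinct nonzero elements of $\mathbb{F}_n$. Then $\pi(\sigma_{12})=0$, which is already the normalisation of \eref{e:convent}, and $\pi(\sigma_{1j})=0$ for $j\ge 3$, while the other six standardised $\sigma$-parities equal the Legendre symbols of $t_1,t_2,t_3,t_2-t_1,t_3-t_1,t_3-t_2$. The resulting $\sigma$-graph is a tournament on $[1,5]$ with vertex~$1$ as a sink. By \Tref{T:switchclass} there are exactly two switching classes for $(k,n)=(5,n)$ with $n\equiv 3\mod 4$, of sizes $192$ and $320$ summing to $512$, and \lref{l:tauparconj} says isotopies keep the $\tau$-parity inside its switching class. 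So the proof reduces to exhibiting one $\OA(5,n)$ of the above form in each switching class.

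The main obstacle is verifying that the Legendre-symbol patterns realised as $(t_1,t_2,t_3)$ varies include representatives of both switching classes. A routine argument shows that every switching class on five vertices contains a sink-at-vertex-$1$ tournament (switching at a suitable subset of $\{2,3,4,5\}$ independently reverses each of the four edges incident to $1$), so it suffices to realise at least one Legendre pattern from each class among valid triples. For $n=11$ this is a direct finite check over the $\binom{10}{3}=120$ admissible triples; for larger primes $n\equiv 3\mod 4$ one either writes down two universal parametric families of triples whose $\sigma$-graphs algebraically fall in different classes, or invokes a standard Weil-bound estimate for character sums to show that all $64$ sign patterns of the six Legendre symbols are achieved once $n$ is large enough, with the finitely many remaining primes handled by direct enumeration.
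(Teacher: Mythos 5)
Your framework is sound and runs closely parallel to the paper's own proof: both use the Desarguesian (linear) MOLS over $\mathbb{F}_n$, and both reduce all the relevant parities to quadratic residuosity of differences of slopes. Your Zolotarev-style computation of $\pi(\sigma_{ij})$ as the Legendre symbol of $\det\sigma_{ij}$ is correct and, via \eref{e:taufromsigmas}, is equivalent to the paper's direct computation of $\tau^c_{ij}$ from the multiplicative orders of ratios of slope differences (for $n\equiv3\pmod4$ an element of $\mathbb{F}_n^*$ has odd order exactly when it is a square). The reduction of the theorem to ``exhibit one $\OA(5,n)$ in each of the two switching classes of sizes $192$ and $320$'' is also the right strategy.

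The gap is in your final paragraph, which is precisely where the substance of the proof lies. You never actually produce, for \emph{every} prime $n\equiv3\pmod4$ with $n\ge11$, two triples $(t_1,t_2,t_3)$ whose Legendre-symbol patterns land in the two different switching classes; you only assert that one could ``write down two universal parametric families'' or ``invoke a standard Weil-bound estimate \dots with the finitely many remaining primes handled by direct enumeration.'' Neither alternative is carried out: no parametric family is exhibited; no verification is given that any particular sign pattern of $(t_1,t_2,t_3,t_2-t_1,t_3-t_1,t_3-t_2)$ belongs to the size-$192$ class rather than the size-$320$ class (this requires an explicit check against the computed list of switching classes --- a sign pattern does not announce its class ``algebraically''); and the Weil-bound route needs explicit constants plus a finite enumeration that is not done, which matters because the theorem claims \emph{all} primes $n\ge11$ in this congruence class, not just sufficiently large ones. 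The paper closes exactly this gap with Jacobsthal's theorem: for prime $n\equiv3\pmod4$ with $n>7$ there exist three consecutive nonresidues $a-1,a,a+1$, and for $n>3$ a consecutive pattern residue/nonresidue/residue; taking slopes $a,a^2,a^3$ in each case yields two explicitly computed $\tau$-parity vectors which are then checked to lie in the two distinct switching classes. To complete your argument you would need an analogous explicit existence statement for residue patterns of your six quantities, valid for every such prime $n\ge11$, together with the class-membership verification for the two resulting patterns.
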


\begin{proof}
Suppose $n \geq 11$ is prime and $n \equiv 3 \mod{4}$.  Let $A$ be the
$\OA(n+1,n)$ corresponding to $n-1$ MOLS$(n)$, $\{ L_1, \dots, L_{n-1}\}$, 
where $L_\lambda[r,c] = \lambda r + c \mod{n}$ for
$\lambda \in [1,n-1]$.  First we determine $\tau_{ij}^c$ for every 
$c,i,j \in [1,\dots,k]$.

It is straightforward to check that $\tau_{ij}^1 = 0$ for every 
$i,j\in [2,k]$ since each of the permutations $\rho_a(1,i,j)$
is of the form $x \mapsto x+d\mod n$ for a constant $d$. 
Observe that the parity of a permutation $x \mapsto bx+d$ is the
same as the parity of $x \mapsto bx$. 
If $b$ has (multiplicative) order $q$ in $\Z_n$ then the permutation
$x\mapsto bx$ has cycle structure consisting of $({n-1})/{q}$ cycles
of length $q$.  Hence it has even parity if and only if
$({n-1})(q-1)/q$ is even, which occurs if and only if $q$ is odd,
given that $n \equiv 3 \mod{4}$. Now, $\rho_a(c,1,j)$ is the
permutation $x\mapsto (j-c)x+a$ for $j\ge2$. For $j>i \geq 2$, we see that 
$\rho_a(c,i,j)$
is the permutation $(i-c)x + a \mapsto (j-c)x+a$, which has the same
parity as the permutation $x\mapsto(i-c)^{-1}(j-c)x$. It follows that
in $\Z_2$,
\begin{align*}
& \tau_{ij}^1 = 0, \\
& \tau_{1j}^c = q-1, \text{ for }j\ge2,\text{ where } q \text{ is the order of $j-c$, and}  
\\
& \tau_{ij}^c = q-1, \text{ for }j>i\ge2, \text{ where } q \text{ is the order of } 
(i-c)^{-1}(j-c)
\end{align*}
Here order means multiplicative order, modulo $n$.

Let $a-1,a,a+1 \in \Z_n$ all be quadratic non-residues and define
$A_1=\A(L_a, L_{a^2},L_{a^3})$.  
Thus, columns $1,2,3,4,5$ of $A_1$ correspond to columns
$1,2, a+2, a^2+2, a^3+2$ of $A$.  The entire vector $\tau(A_1)$ 
can be reconstructed once we work out that
$$
[\tau_{ 2 3 }^ 1, \hspace{0.1cm}
 \tau_{ 2 4 }^ 1, \hspace{0.1cm}
 \tau_{ 2 5 }^ 1, \hspace{0.1cm}
 \tau_{ 1 2 }^ 3, \hspace{0.1cm}
 \tau_{ 1 2 }^ 4, \hspace{0.1cm}
 \tau_{ 1 3 }^ 4, \hspace{0.1cm}
 \tau_{ 1 2 }^ 5, \hspace{0.1cm}
 \tau_{ 1 3 }^ 5, \hspace{0.1cm}
 \tau_{ 1 4 }^ 5 \hspace{0.1cm}] 
= [0, 0, 0, 0, 1, 1, 0, 0, 0].$$
These values are not hard to derive. For example, $\tau_{13}^5$ for
$A_1$ corresponds to $\tau_{1j}^c$ for $A$ where $j = a+2$ and 
$c=a^3+2$.  Since $j-c = a-a^3 = (-a)(a+1)(a-1)$ is a quadratic residue,
it follows that $q$ is odd and hence $\tau_{13}^5=0$ for $A_1$.


Let $a-1,a,a+1 \in \Z_n$ be such that $a$ is a quadratic
non-residue and $a-1$ and $a+1$ are quadratic residues. Let 
$A_2=\A(L_a, L_{a^2}, L_{a^3})$, so
columns $1,2,3,4,5$ of $A_1$ correspond to columns $1,2, a+2, a^2+2,
a^3+2$ of $A$.  As before, we determine $\tau(A_2)$ by finding
$$
[\tau_{ 2 3 }^ 1, \hspace{0.1cm}
 \tau_{ 2 4 }^ 1, \hspace{0.1cm}
 \tau_{ 2 5 }^ 1, \hspace{0.1cm}
 \tau_{ 1 2 }^ 3, \hspace{0.1cm}
 \tau_{ 1 2 }^ 4, \hspace{0.1cm}
 \tau_{ 1 3 }^ 4, \hspace{0.1cm}
 \tau_{ 1 2 }^ 5, \hspace{0.1cm}
 \tau_{ 1 3 }^ 5, \hspace{0.1cm}
 \tau_{ 1 4 }^ 5 \hspace{0.1cm}] 
= [0, 0, 0, 0, 1, 0, 0, 0, 1].$$
Consulting our computation of the switching classes we find
$\tau(A_1)$ in the switching class 
of size 192, while $\tau(A_2)$ belongs to the switching class of size 320.

Jacobsthal \cite{Jac06} showed that for prime $n \equiv 3 \mod{4}$,
there exists a sequence of three consecutive elements which are
quadratic non-residues for prime $n > 7$ and there exists a sequence of
consecutive elements which are residue, non-residue, residue for prime
$n > 3$.  Thus, for prime $n \equiv 3 \mod{4}$, $n \geq 11$, there
are elements $a-1, a, a+1 \in \Z_n$ such that the above
constructions produce actual $\tau$-parities belonging to each of the two
switching classes.
\end{proof}

In principle, similar families can be constructed for other
cases. However, examination of \Tref{T:switchclass} reveals that
the case that we solved in \tref{t:InfiniteFamAllPar} involves fewer
switching classes (and hence less work) than would be required for
different values of $n\mod4$ or for larger values of $k$.

In the remainder of this section we demonstrate a different way to
achieve infinite families that achieve all plausible $\tau$-parities.
Specifically, we will show that if there are examples of $\OA(k,n)$
achieving all plausible $\tau$-parities for some $n$, then there are
examples of $\OA(k,N)$ achieving all plausible $\tau$-parities for all
sufficiently large $N$.  We first require some definitions.

An {\it incomplete Latin square} $L = (l_{ij})$ of order $n$ with a
hole of order $h$ is an $n \times n$ array on an $n$-set $\Lambda$
with a {\it hole} $H \subseteq \Lambda$ such that each cell $(i,j)$ is
empty if $\{i,j\} \subseteq H$ and contains exactly one symbol
otherwise, every row and every column of $L$ contains each symbol at
most once, and rows and columns indexed by $H$ do not contain symbols
in $H$.  A pair of incomplete Latin squares of order $n$ with common
hole of order $h$ are {\it orthogonal} if, when superimposed, all of the
$n^2 - h^2$ ordered pairs of symbols in $\Lambda^2\,\backslash H^2$
occur amongst the cells $(i,j)$ where $\{i,j\} \not\subseteq H$.  A
set of incomplete Latin squares of order $n$ with a common hole of
order $h$ which are pairwise orthogonal is called a set of incomplete
MOLS.

An {\it incomplete pairwise balanced design} on $n$ points with hole
size $h$ and blocks sizes $K$, denoted $IPBD((n,h),K)$, is a triple
$(V,H,\blocks)$ such that $V$ is a set of $n$ points, $H$ is a
subset of $V$ of size $h$ called the {\it hole}, and $\blocks$ is
a collection of subsets of $V$ where $|B| \in K$ for each $B \in
\blocks$ and every pair of points not both in $H$ occur together
in exactly one block.  Combining a result from \cite{CD11} 
(given as Construction 6.1.2 in \cite{vB15}) and 
\cite[Prop.~6.1.1]{vB15}, we have the following theorem.

\begin{theorem}\label{t:IMOLSconstruction}
If there exists an $IPBD((n,h),K)$ and for each $k \in K$ there exists
$t+1$ MOLS of order $k$, then there exist $t$ incomplete MOLS of order
$n$ with a common hole of order $h$.
\end{theorem}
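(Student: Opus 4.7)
The plan is to carry out a Wilson-style PBD construction on the given IPBD, with the auxiliary MOLS on each block in idempotent form so that cells inside the hole can simply be left empty without disturbing anything else. The key preparatory step, and where the hypothesis is used, is to convert, for each $k\in K$, the $t+1$ given MOLS of order $k$ into $t$ \emph{idempotent} MOLS of order $k$, meaning $t$ MOLS satisfying $L_a(j,j)=j$ for all $a,j$. To do this, view the $t+1$ MOLS as an $\OA(t+3,k)$; fixing any value in the last column leaves $k$ rows whose entries in each of the remaining $t+2$ columns are pairwise distinct, so these $k$ cells form a common transversal of the $t$ MOLS encoded by columns $3,\dots,t+2$. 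A uniform choice of row and column permutation, together with a per-square choice of symbol permutation, moves this transversal onto the main diagonal and forces the diagonal entry at $(j,j)$ to equal $j$ in every one of the $t$ squares.

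Next, I would carry out the main construction. Write $(V,H,\blocks)$ for the given IPBD; since no pair of points from $H$ is covered, each $B\in\blocks$ contains at most one point of $H$. For each $B$ of size $k$, fix bijections identifying the row-, column- and symbol-sets of the idempotent $L_1^{(k)},\dots,L_t^{(k)}$ with $B$, producing $t$ idempotent MOLS $L_1^B,\dots,L_t^B$ on the point set $B$. For each cell $(r,c)$ with $r\ne c$ and $\{r,c\}\not\subseteq H$, the IPBD property yields a unique block $B(r,c)$ containing $\{r,c\}$; I set the $(r,c)$-entry of the $a$-th incomplete square to $L_a^{B(r,c)}(r,c)$. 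For $r\in V\setminus H$, the diagonal cell $(r,r)$ receives the symbol $r$, consistently with the idempotence of every $L_a^B$ for which $r\in B$. All cells in $H^2$ are left empty.

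It then remains to verify the incomplete-MOLS axioms. Well-definedness of the cell-filling is immediate from the IPBD property. For a row $r\in V\setminus H$, the blocks through $r$ partition its off-diagonal cells into pieces $\{(r,c):c\in B\setminus\{r\}\}$, one per block $B\ni r$; on each piece the values of $L_a^B$ form a permutation of $B\setminus\{r\}$ by idempotence and Latinness of $L_a^B$, so row $r$ of the $a$-th square, together with the diagonal entry $r$, is a permutation of $V$. For a row $r\in H$, every block through $r$ meets $H$ only in $r$, so by idempotence no off-diagonal $L_a^B$-entry in row $r$ equals $r$; hence the $n-h$ filled entries lie in $V\setminus H$ and cover it bijectively. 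Columns are symmetric. For orthogonality of the $a$-th and $b$-th squares, any pair $(u,v)\in V^2\setminus H^2$ with $u\ne v$ is realised exactly once inside $B(u,v)\times B(u,v)$ by the orthogonal-array property and can occur in no other block since $\{u,v\}$ lies in only one block, while a diagonal pair $(u,u)$ with $u\notin H$ occurs only at $(u,u)$ by idempotence.

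The main obstacle, and the reason the hypothesis requires $t+1$ rather than $t$ MOLS per block, is the idempotence reduction in the first paragraph. Without a common transversal among the $t$ block-MOLS, the diagonal cells cannot be consistently defined across blocks sharing a point, and the exclusion of $H$-symbols from hole rows and columns breaks down. A common transversal is not guaranteed by $t$ MOLS alone, but is produced automatically by fixing one column of the orthogonal array coming from any $t+1$ pairwise orthogonal Latin squares of order $k$.
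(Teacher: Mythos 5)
Your proof is correct and is essentially the argument that the paper delegates to its references rather than writing out: the reduction from $t+1$ MOLS of order $k$ to $t$ idempotent MOLS via a common transversal is \cite[Prop.~6.1.1]{vB15}, and the block-by-block filling of the $IPBD$ with idempotent MOLS is Construction 6.1.2 there (originally from \cite{CD11}). Your self-contained write-up, including the checks that hole-indexed rows and columns avoid hole symbols and that orthogonality follows from the uniqueness of the block through each cross pair, is a faithful expansion of exactly that intended argument.
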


The following proof is due to Peter Dukes.

\begin{theorem}\label{t:asymIMOLSexistence}
For any fixed $t$ and $h$ and for all sufficiently large $n$, there exist
$t$ incomplete MOLS of order $n$ with a common hole of order $h$.
\end{theorem}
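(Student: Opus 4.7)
The plan is to deduce the theorem from \tref{t:IMOLSconstruction} by producing, for every sufficiently large $n$, an $IPBD((n,h),K)$ whose block-size set $K$ consists only of orders admitting at least $t+1$ MOLS. Once such an IPBD is in hand, \tref{t:IMOLSconstruction} delivers the required incomplete MOLS directly.

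\textbf{Step 1: Choose the block-size set.} Let $N(k)$ denote the maximum number of MOLS of order $k$, and set $K := \{k \ge 2 : N(k) \geq t+1\}$. By the classical theorem of Chowla, Erd\H{o}s and Straus, $N(k) \to \infty$ as $k \to \infty$, so there is a threshold $k_0 = k_0(t)$ such that $K$ contains every integer $k \geq k_0$. In particular, $K$ is cofinite in $\Z_{>0}$.

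\textbf{Step 2: Asymptotic existence of IPBDs with the required block sizes.} Because $K$ is cofinite, the divisibility conditions $\gcd\{k-1 : k\in K\} = 1$ and $\gcd\{k(k-1) : k\in K\} = 2$ are satisfied trivially, so no nontrivial necessary congruences on $n$ survive. Wilson's asymptotic existence theorem for pairwise balanced designs, together with its well-known adaptation to incomplete PBDs with a prescribed hole of fixed size $h$ (see, e.g., \cite{vB15}), then guarantees that there exists an $IPBD((n,h),K)$ for every sufficiently large $n$.

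\textbf{Step 3: Apply \tref{t:IMOLSconstruction}.} For any such $IPBD((n,h),K)$, every block $B$ satisfies $|B| \in K$, so by the definition of $K$ there exist $t+1$ MOLS of order $|B|$. The hypotheses of \tref{t:IMOLSconstruction} are therefore satisfied, yielding $t$ incomplete MOLS of order $n$ with a common hole of order $h$, as required.

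The only substantive ingredient is Step 2: the asymptotic existence of incomplete PBDs with a cofinite block-size set and a fixed hole size. This is a genuine design-theoretic result rather than a bookkeeping exercise, and citing it is what forces the appeal to \cite{vB15}. Steps 1 and 3 are then essentially formalities; the Chowla--Erd\H{o}s--Straus lower bound on $N(k)$ is used only qualitatively (that $N(k)\to\infty$), with no control needed on the growth rate.
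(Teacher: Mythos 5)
Your proof is correct and follows essentially the same route as the paper's: choose a block-size set $K$ all of whose members admit $t+1$ MOLS and whose gcd parameters are trivial, invoke the asymptotic existence of an $IPBD((n,h),K)$ from \cite{vB15}, and finish with \tref{t:IMOLSconstruction}. The paper merely makes $K$ explicit and finite, namely $\{2^a,2^{a+1},3^b\}$ with $2^a,3^b>t+1$, so that McNeish's theorem suffices in place of the Chowla--Erd\H{o}s--Straus bound.
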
 

\begin{proof}
Let $K = \{2^a, 2^{a+1}, 3^b\}$ where $2^a>t+1$ and $3^b>t+1$.
Then, by McNeish's Theorem (see, e.g.~\cite[Thm~1.1.2]{vB15}), 
there exist $t+1$ MOLS of order $v$ for $v \in \{2^a,
2^{a+1}, 3^b \}$.  Also, by \cite[Thm~5.1.2]{vB15} and the
choice of $K$, there exists an $IPBD((n,h),K)$ for all sufficiently
large $n$.  The result follows by \tref{t:IMOLSconstruction}.
\end{proof}

We remark that Dukes and van Bommel \cite{DvB15} proved that there
exists a set of $t$ incomplete MOLS of order $n$ with a common hole of
order $h$ for all sufficiently large $n$ and $h$ satisfying $n \geq 8(t+1)^2h$.

\begin{theorem}\label{t:actualNsufflarge}
Let $N$ be sufficiently large relative to $n$.
The number of actual $\tau$-parities for $(k,N)$ is no less than
the number of actual $\tau$-parities for $(k,n)$.
\end{theorem}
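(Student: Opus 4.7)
The plan is to exhibit an injection from actual $\tau$-parities for $(k,n)$ to actual $\tau$-parities for $(k,N)$ by completing an incomplete set of MOLS. For all $N$ sufficiently large relative to $n$, \tref{t:asymIMOLSexistence} with $t=k-2$ yields a set $M^*$ of $k-2$ incomplete MOLS of order $N$ with a common hole $H$ of order $n$. After identifying the alphabet of any $\OA(k,n)$ $A$ with $H$, let $A^+$ denote the $\OA(k,N)$ obtained by filling the hole of $M^*$ with (the MOLS corresponding to) $A$. The theorem reduces to the claim that there is a vector $\Delta=\Delta(M^*)$, depending only on $M^*$, such that
\[
\tau(A^+) = \tau(A) + \Delta
\]
for every $\OA(k,n)$ $A$ (with addition in $\Z_2$), because then the map $A \mapsto A^+$ induces an injection on $\tau$-parities.

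To prove the claim, fix distinct $c,i,j \in [1,k]$ and split the defining sum as
\[
\tau^c_{ij}(A^+) = \sum_{s \in H} \pi(\rho^{A^+}_s(c,i,j)) + \sum_{s \in \Lambda_N \setminus H} \pi(\rho^{A^+}_s(c,i,j)).
\]
For $s \in \Lambda_N \setminus H$, the hole property of $M^*$ rules out any row of $A^+$ with column $c$ value $s$ lying in the hole, so $\pi(\rho^{A^+}_s(c,i,j))$ depends only on $M^*$. For $s \in H$, the $N$ rows of $A^+$ with column $c$ value $s$ split into $n$ hole rows (filled from $A$) and $N-n$ non-hole rows (coming from $M^*$). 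The column-$i$ values arising from the hole rows are exactly $H$, because the filling is itself a set of MOLS of order $n$ on the alphabet $H$; by orthogonality of columns $c$ and $i$ in $A^+$, the column-$i$ values from the non-hole rows are therefore exactly $\Lambda_N \setminus H$. The same holds for column $j$. Hence $\rho^{A^+}_s(c,i,j)$ preserves the partition $\Lambda_N = H \sqcup (\Lambda_N \setminus H)$, its restriction to $H$ coincides with $\rho^A_s(c,i,j)$, and its restriction to $\Lambda_N \setminus H$ depends only on $M^*$. Summing parities and collecting the $A$-independent terms into a constant $\Delta^c_{ij}(M^*) \in \Z_2$ yields $\tau^c_{ij}(A^+) = \tau^c_{ij}(A) + \Delta^c_{ij}(M^*)$.

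The main obstacle is the case analysis required to confirm the partition-preservation, which must be checked for every choice of distinct $c,i,j \in [1,k]$, particularly when some of them lie in $\{1,2\}$ (so that the corresponding columns of $A^+$ are row- or column-indices rather than MOLS entries). In each sub-case the conclusion follows either from the hole property of $M^*$ (when a non-hole row has a coordinate in $H$) or from the orthogonality counting argument above (when both coordinates lie outside $H$). Once the identity $\tau(A^+) = \tau(A) + \Delta$ is in hand, the induced affine map on $\Z_2$-vectors is injective, and the theorem follows.
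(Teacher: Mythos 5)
Your proposal is correct and follows essentially the same route as the paper: both embed an $\OA(k,n)$ into an $\OA(k,N)$ via \tref{t:asymIMOLSexistence} and use the fact that replacing the subarray filling the hole changes $\tau^c_{ij}$ only by a quantity independent of that filling, so the induced map on $\tau$-parities is injective. The only difference is one of detail: the paper asserts the key step (``this forces $\tau^c_{ij}(A'_1)\ne\tau^c_{ij}(A'_2)$'') without proof, whereas you explicitly establish the affine relation $\tau(A^+)=\tau(A)+\Delta(M^*)$ by verifying that each $\rho_s$ preserves the partition $\Lambda_N=H\sqcup(\Lambda_N\setminus H)$.
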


\begin{proof}
Let $A_1$ and $A_2$ be two orthogonal arrays $\OA(k,n)$ with different
$\tau$-parities.  By \tref{t:asymIMOLSexistence}, for all sufficiently
large $N$, there exists an $\OA(k,N)$, say $A'_1$, that contains $A_1$
as a subarray.  Let $A'_2$ be the $\OA(k,N)$ obtained from $A'_1$ by
replacing the subarray $A_1$ by $A_2$.
Since $A_1$ and $A_2$ have different $\tau$-parities, there exist distinct
$c,i,j\in [1,k]$ such that $\tau^c_{ij}({A_1})\ne\tau^c_{ij}({A_2})$. 
This forces $\tau^c_{ij}({A'_1})\ne\tau^c_{ij}({A'_2})$, so
$A'_1$ and $A'_2$ have different $\tau$-parities. 
The result follows.
\end{proof}

\begin{corollary}\label{c:actualNsufflarge}
If all plausible $\tau$-parities are actual for $(k,n)$ then, for all
sufficiently large $N$, all plausible $\tau$-parities are actual for
$(k,N)$.
\end{corollary}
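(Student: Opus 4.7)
The plan is to combine the hypothesis with two previous results: Theorem \ref{t:sigmaplausible}, which says the number of plausible $\tau$-parities for $(k,N)$ is exactly $2^{\binom{k}{2}-1}$ for every $N$, and Theorem \ref{t:actualNsufflarge}, which says the number of actual $\tau$-parities is monotone (in the appropriate sense) as $n$ grows.

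First, I would observe that the count $2^{\binom{k}{2}-1}$ of plausible $\tau$-parities depends only on $k$ and not on $n$, so the set of plausible $\tau$-parities for $(k,n)$ and for $(k,N)$ have the same cardinality. Hence the hypothesis ``all plausible $\tau$-parities are actual for $(k,n)$'' is equivalent to saying that the number of actual $\tau$-parities for $(k,n)$ equals $2^{\binom{k}{2}-1}$.

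Next, I would apply Theorem \ref{t:actualNsufflarge}: for all sufficiently large $N$, the number of actual $\tau$-parities for $(k,N)$ is at least $2^{\binom{k}{2}-1}$. But every actual $\tau$-parity is plausible, and there are only $2^{\binom{k}{2}-1}$ plausible $\tau$-parities for $(k,N)$ in total. So the number of actual $\tau$-parities for $(k,N)$ is exactly $2^{\binom{k}{2}-1}$, forcing every plausible $\tau$-parity for $(k,N)$ to be actual.

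There is no real obstacle here, since all the substance was packaged into Theorem \ref{t:actualNsufflarge} (whose proof in turn relies on Theorem \ref{t:asymIMOLSexistence} to embed an $\OA(k,n)$ as a subarray of some $\OA(k,N)$ and then swap subarrays to transfer distinct $\tau$-parities). The corollary is just a cardinality argument: a monotone map between finite sets of the same size that hits everything on the smaller side must hit everything on the larger side too.
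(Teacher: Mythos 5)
Your proposal is correct and is essentially the argument the paper intends: the corollary is stated without proof precisely because it follows from Theorem~\ref{t:actualNsufflarge} together with the count $2^{\binom{k}{2}-1}$ of plausible $\tau$-parities from Theorem~\ref{t:sigmaplausible}, exactly as you lay out. Your observation that the cardinality of the set of plausible $\tau$-parities is independent of $n$ (even though the set itself depends on $n\bmod 4$) is the right point to make explicit, and the pigeonhole step is sound.
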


On the basis of the evidence gathered above, we propose:

\begin{conjecture}\label{cj:allactual}
For any fixed $k$ and all sufficiently large $N$, all plausible
$\tau$-parities are actual for $(k,N)$.
\end{conjecture}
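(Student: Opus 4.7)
The strategy is to invoke \cyref{c:actualNsufflarge} and reduce the conjecture to the following statement: for every fixed $k$ there exists at least one order $n_0=n_0(k)$ at which all $2^{\binom{k}{2}-1}$ plausible $\tau$-parities are actual. For $k\le 5$ this is already supplied by \Tref{T:actualparvec} and \tref{t:InfiniteFamAllPar}, so the conjecture reduces to producing, for each $k\ge 6$, a single order at which every switching class is represented by some $\OA(k,n_0)$.

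The first concrete plan I would try is to generalize the construction of \tref{t:InfiniteFamAllPar}. Fix a large prime $p\equiv 3\pmod4$ and consider the Desarguesian $\OA(p+1,p)$. For any selection of $k$ columns in which the first two are the distinguished ones, the remaining columns specify slopes $a_1,\ldots,a_{k-2}\in\Z_p^*$, and the computation in the proof of \tref{t:InfiniteFamAllPar} shows that every $\tau^c_{ij}$ of the resulting $\OA(k,p)$ is determined by the Legendre symbols of explicit rational functions of the slopes. One would then try, by Weil-type bounds on character sums of rational functions of several variables, to show that for $p$ sufficiently large the tuple $(a_1,\ldots,a_{k-2})$ can be chosen so that these Legendre symbols realize any sign pattern that is consistent with the algebraic identities among the relevant rational functions. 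If those algebraic identities coincide exactly with the plausibility relations \eqref{e:tauidxcommutativity}, \eqref{e:Fixedcol} and \eqref{e:Triple}, then every switching class is realized and the conjecture follows.

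The main obstacle is establishing that coincidence. The plausibility relations carve out an affine subspace of dimension $\binom{k}{2}-1$ over $\Z_2$, and the nonlinear map from slope tuples through Legendre symbols could conceivably land in a proper subvariety of that affine subspace. Although the dimension count is encouraging, the algebra is delicate because the Legendre character is multiplicative, not additive, and the relevant rational functions of the slopes satisfy their own algebraic dependencies that need not mirror the plausibility relations in a convenient way. When this pure approach falls short, one could try to supplement it by combining subarrays drawn from several non-isomorphic projective planes (Desarguesian, Hall, Hughes and their duals provide genuinely distinct examples, as witnessed by \Tref{TableIsom}) via the incomplete-MOLS construction of \tref{t:IMOLSconstruction}, or by admitting non-prime orders. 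A purely probabilistic approach --- showing that a uniformly random $\OA(k,n)$ lands in any prescribed switching class with positive probability as $n\to\infty$ --- would also suffice in principle, and is consistent with the equidistribution result of \cite{CW16} for $k=3$, but no sampling or counting technology for orthogonal arrays of strength $2$ with $k\ge 4$ columns is currently strong enough to push it through uniformly in $k$.
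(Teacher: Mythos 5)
The statement you are addressing is stated in the paper as a \emph{conjecture}, and the paper offers no proof of it: the authors only establish it for $k\le 5$ (via \Tref{T:actualparvec}, \tref{t:InfiniteFamAllPar} and \cyref{c:actualNsufflarge}) and explicitly leave the general case open. Your proposal is likewise not a proof but a research programme, and you say as much yourself. The one rigorous step in it --- the reduction, via \cyref{c:actualNsufflarge}, to exhibiting for each fixed $k$ a single order $n_0(k)$ at which every plausible $\tau$-parity is actual --- is correct and is precisely the reduction the paper uses to settle $k\le 5$. Everything after that is speculative.

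The concrete gap is the surjectivity claim at the heart of your main plan: you would need to show that, for each $k\ge 6$, the map sending a slope tuple $(a_1,\dots,a_{k-2})\in(\Z_p^*)^{k-2}$ to the $\tau$-parity of the corresponding $\OA(k,p)$ hits every switching class, and nothing in your sketch establishes this. Weil bounds can force any prescribed pattern of Legendre symbols on a family of rational functions only when those functions are multiplicatively independent modulo squares; here the relevant functions (differences and ratios of the slopes) carry their own multiplicative dependencies, and you give no argument that the resulting constraints on the achievable parity vectors coincide with the plausibility relations \eqref{e:tauidxcommutativity}, \eqref{e:Fixedcol} and \eqref{e:Triple}. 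Note also that in this construction $\tau^1_{ij}=0$ identically, so the image is a priori confined to parities whose switching class contains a representative with an empty $\tau$-graph; one must check this is no loss, and more importantly one must account for the rapid growth in the number of switching classes recorded in \Tref{T:switchclass} (already $69$ for $k=8$, $n\equiv3\mod4$, versus the $2$ classes handled in \tref{t:InfiniteFamAllPar}). The paper itself flags exactly this as the obstacle to extending its construction. Your fallback suggestions (mixing planes via \tref{t:IMOLSconstruction}, or a probabilistic argument) are reasonable directions but are not developed to the point of closing the gap, and you correctly observe that the requisite counting technology for orthogonal arrays with $k\ge 4$ does not currently exist. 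In short: no step here is wrong, but the conjecture remains unproved by this proposal, just as it is unproved in the paper.
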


When $k$ and $n$ are comparable in size we might expect there to be
plausible $\tau$-parities that are not actual for $(k,n)$. Indeed, we
will see in the next section that this is definitely the case when
$k=n+1$.

We close the section with an observation related to the comments at the
end of \sref{ss:tau-parities}. 
If $M=\M(A)$ where $A$ is an $\OA(k,n)$, then the
individual Latin squares in $M$ have at most $2(k-2)$ independent bits
of information in their row, column and symbol parities. For $k\ge4$,
this is strictly less than ${k\choose 2}-1$, which suggests, on the
basis of everything we have seen in this section, that a set of MOLS
has more information in its $\tau$-parity than there is in the
$\tau$-parity of its constituent Latin squares. Certainly, this is
true for pairs and triples of MOLS of large orders, by combining
\cyref{c:actualNsufflarge} and \Tref{T:actualparvec}.

\section{Projective planes}\label{s:PP}

In this section, we study the important special case of orthogonal
arrays which correspond to finite projective planes.  To begin,
we use the well-known fact that such orthogonal arrays correspond to
sharply $2$-transitive sets of permutations to derive a constraint on
the $\tau$-parity of the $\OA$.  A set $S$ of permutations of a set $X$ 
is said to be \textit{sharply
$2$-transitive} if for all $x,x',y,y' \in X$ where $x \ne x'$
and $y \ne y'$, there exists a unique $\gamma \in S$ such that
$\gamma(x) = y$ and $\gamma(x') = y'$.


\begin{theorem} 
  Let $A=(a_{r\ell})$ be an orthogonal array $\OA(n+1,n)$ on alphabet
  $\Lambda$, where $r \in \Lambda^2$ and $\ell \in [1,n+1]$. Let $i,j
  \in [1,n+1]$ be distinct integers. Then, in $\Z_2$,
\begin{equation} \label{e:PPcond}
  \sum_{c \in [1,n+1]\setminus\{i,j\} } \tau_{ij}^c = {n \choose 2}.
\end{equation}
\end{theorem}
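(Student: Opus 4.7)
The plan is to exploit the classical equivalence between an $\OA(n+1,n)$ and a sharply $2$-transitive set of $n(n-1)$ permutations of $\Lambda$, and then evaluate the sum of parities in such a set by a short inversion count. First I would argue that
\[
S = \{\rho_s(c, i, j) : c \in [1, n+1] \setminus \{i, j\},\ s \in \Lambda\}
\]
is such a sharply $2$-transitive set (in particular, a set of $n(n-1)$ distinct permutations). Given $u,v,u',v' \in \Lambda$ with $u \ne v$ and $u' \ne v'$, there is a unique row $r$ of $A$ with $a_{ri} = u,\ a_{rj} = u'$, and a unique row $r' \ne r$ with $a_{r'i} = v,\ a_{r'j} = v'$. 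In the projective plane correspondence from the introduction, $r$ and $r'$ are two distinct affine points, so the line through them meets the line at infinity at a unique point; equivalently, there is a unique column $c$ with $a_{rc} = a_{r'c}$. Since $r$ and $r'$ disagree in columns $i$ and $j$, we have $c \notin \{i, j\}$. Setting $s = a_{rc}$, the permutation $\rho_s(c,i,j)$ is the unique element of $S$ that sends $u \mapsto u'$ and $v \mapsto v'$.

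Next, I would translate the target sum using the definition of $\tau$-parity:
\[
\sum_{c \in [1,n+1] \setminus \{i,j\}} \tau_{ij}^c
= \sum_{c \ne i,j}\;\sum_{s \in \Lambda}\pi(\rho_s(c,i,j))
= \sum_{\rho \in S} \pi(\rho).
\]
To evaluate the right-hand side, fix any linear order on $\Lambda$ and use the fact that $\pi(\rho)$ is congruent modulo $2$ to the number of inversions of $\rho$, that is, pairs $u < v$ with $\rho(u) > \rho(v)$. Interchanging the order of summation,
\[
\sum_{\rho \in S} \pi(\rho) \equiv \sum_{u < v}\, \bigl|\{\rho \in S : \rho(u) > \rho(v)\}\bigr| \pmod 2.
\]
For each fixed $u < v$, sharp $2$-transitivity says that for every ordered pair $(a, b)$ of distinct elements of $\Lambda$ there is a unique $\rho \in S$ with $\rho(u) = a,\ \rho(v) = b$; hence the inner cardinality equals the number of such pairs with $a > b$, namely $\binom{n}{2}$. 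The outer sum therefore contributes $\binom{n}{2} \cdot \binom{n}{2}$, which is congruent to $\binom{n}{2}$ modulo $2$ since $x^2 \equiv x \pmod 2$.

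The main obstacle is really the sharp $2$-transitivity of $S$, but this is a direct repackaging of the defining incidence property of the projective plane underlying any $\OA(n+1,n)$: two distinct affine points lie on a unique line, whose intersection with the line at infinity supplies the required column $c$. Once that step is in place, the identity follows from the one-line inversion count above.
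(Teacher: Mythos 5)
Your proposal is correct and follows essentially the same route as the paper: both identify $\{\rho_s(c,i,j) : c\notin\{i,j\},\, s\in\Lambda\}$ as a sharply $2$-transitive set and then count inversions to get $\binom{n}{2}^2\equiv\binom{n}{2}\pmod 2$. The only cosmetic difference is that you justify the key fact that two distinct rows of an $\OA(n+1,n)$ agree in exactly one column via the projective-plane incidence geometry, whereas the paper does it by a direct double count of agreements against pairs of rows.
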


\begin{proof}
Let $C_{ij} = [1,n+1]\setminus\{i,j\}$.
By definition, $\tau^c_{ij} = \sum_{x \in \Lambda} \pi(\rho_x(c,i,j))$
for $c\in C_{ij}$. We exploit the fact that 
$\{\rho_x(c,i,j)  :  x \in \Lambda, c \in C_{ij}\}$ 
is a sharply $2$-transitive set of permutations.  
Indeed, let $u, u', v, v' \in \Lambda$ be such that $u < u'$ and $v>v'$. 
Let $r, r' \in \Lambda^2$ be such that $a_{ri} = u$, $a_{rj}
= v$, $a_{r'i} = u'$, $a_{r'j} = v'$. 
No two rows of $A$ agree in more than one column, and overall 
there are $(n+1)n{n\choose 2}$ places in $A$ where two rows agree.
Since this matches the number of pairs of rows, 
any two rows must agree in exactly one column. 
Thus, there is a unique $c \in C_{ij}$ such that
$a_{rc}=a_{r'c}=x$, and $\rho_x(c,i,j)$ maps $u\mapsto v$
and $u'\mapsto v'$, producing an inversion. So in $\Z_2$,
\[
\sum_{c \in C_{ij} } \tau_{ij}^c = \sum_{c \in C_{ij} } \sum_{x \in 
\Lambda} \pi\big(\rho_x(c,i,j)\big)  = {n \choose 2}^2 = {n \choose 2},
\]
because each of the possible ${n \choose 2}{}^2$ inversions occurs
exactly once amongst the permutations in our sharply $2$-transitive set.
\end{proof}

By \eqref{e:PPcond}, we have a strengthening of \tref{t:stackedgraph} for the case when $k = n+1$.

\begin{corollary}\label{c:stackPP}
Let $G$ be the stack corresponding to an $\OA(n+1,n)$. Then $G$ is an
empty graph when $n \equiv 0,1 \mod{4}$ and $G$ is a
complete graph when $n \equiv 2,3 \mod{4}$.
\end{corollary}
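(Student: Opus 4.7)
The plan is to read off the corollary directly from equation \eqref{e:PPcond}. Recall that the stack $G$ on vertex set $[1,n+1]$ has edge $\{i,j\}$ precisely when $\sum_{c}\tau^c_{ij}\equiv 1\mod 2$, where the sum is over $c\in[1,n+1]\setminus\{i,j\}$ (since $\tau^c_{ij}$ is only defined for $c\notin\{i,j\}$). But \eqref{e:PPcond} tells us this sum equals $\binom{n}{2}$ in $\Z_2$ for every choice of distinct $i,j\in[1,n+1]$. So whether or not $\{i,j\}$ is an edge of $G$ does not depend on $i$ and $j$ at all, only on the parity of $\binom{n}{2}$.

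The remainder is a trivial case analysis on $n\bmod 4$. When $n\equiv 0,1\mod 4$ one has $\binom{n}{2}\equiv 0\mod 2$, so no pair $\{i,j\}$ is an edge and $G$ is empty; when $n\equiv 2,3\mod 4$ one has $\binom{n}{2}\equiv 1\mod 2$, so every pair is an edge and $G$ is complete on $n+1$ vertices.

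There is essentially no obstacle here: the genuine content sits in the already-established equation \eqref{e:PPcond}, and the corollary is the observation that in the projective-plane case the full column sum $\sum_c\tau^c_{ij}$ (which defines the stack) coincides with the sum appearing on the left of \eqref{e:PPcond}. Compared to \tref{t:stackedgraph}, which only controlled the parity of edge-counts on triples of vertices and therefore only forced the bipartite/two-clique dichotomy, here we actually pin down each edge individually, which is what permits the stronger empty/complete conclusion. The proof itself can be written in two or three lines, just citing \eqref{e:PPcond} and computing $\binom{n}{2}\bmod 2$.
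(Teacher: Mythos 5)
Your proof is correct and is exactly the argument the paper intends: the corollary is stated as an immediate consequence of \eqref{e:PPcond}, since the sum defining the stack's edges coincides with the left-hand side of that equation, and the empty/complete dichotomy is just the parity of $\binom{n}{2}$. Nothing to add.
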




Next we study the consequences of equation~\eref{e:PPcond} on 
$\tau$- and $\sigma$-graphs. First, we give an interpretation of this equation 
for a $\sigma$-graph. Then we use this fact to derive a property of the
$\tau$-graphs. 

\begin{lemma}\label{l:sigmagraphdegrees}
Let $A$ be an $\OA(n+1,n)$ with $\sigma$-graph $\G$.
\begin{itemize}
\item[(i)] If $n \equiv 0 \mod{4}$, then every vertex
  of $\G$ has even degree.
\item[(ii)] If $n \equiv 1 \mod{4}$, then the degrees of the vertices
  of $\G$ all have the same parity.
\item[(iii)] If $n \equiv 2 \mod{4}$, then every vertex of $G$ has
  odd in-degree and odd out-degree.
\item[(iv)] If $n \equiv 3 \mod{4}$, then the in-degrees of the
  vertices of $\G$ are all of one parity and the out-degrees of
  the vertices of $\G$ are all of the other parity.
\end{itemize}
\end{lemma}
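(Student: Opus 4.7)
The plan is to translate equation \eref{e:PPcond} into a statement about $\sigma$-parities via \eref{e:taufromsigmas}, and then read off degree information in the $\sigma$-graph $\G$. Working in $\Z_2$, equation \eref{e:taufromsigmas} gives $\tau^c_{ij} = \pi(\sigma_{ci}) + \pi(\sigma_{cj})$, so \eref{e:PPcond} becomes
\[
\sum_{c\in[1,n+1]\setminus\{i,j\}}\bigl(\pi(\sigma_{ci}) + \pi(\sigma_{cj})\bigr) \;\equiv\; \binom{n}{2} \pmod 2
\]
for every pair of distinct $i,j\in[1,n+1]$. This is the one identity that drives the entire proof.

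Next I would express each sum in terms of degrees. When $n\equiv 0,1\mod 4$, \lref{l:Sym} says $\G$ is undirected and $\deg_\G(v)=\sum_{c\ne v}\pi(\sigma_{cv})$. Splitting off the $c=j$ and $c=i$ terms, the left side equals $\deg_\G(i)+\deg_\G(j) - \pi(\sigma_{ji}) - \pi(\sigma_{ij})$; but in the undirected case $\pi(\sigma_{ij})=\pi(\sigma_{ji})$ by \eref{e:sigmacommutativity}, so the correction vanishes in $\Z_2$, yielding
\[
\deg_\G(i)+\deg_\G(j) \;\equiv\; \binom{n}{2} \;\equiv\; 0 \pmod 2.
\]
Thus all vertex degrees share a common parity, which proves (ii). For (i), observe that $n\equiv 0\mod 4$ makes $n+1$ odd, while the handshake lemma forces $\sum_v\deg_\G(v)$ to be even; with an odd number of equally-parity summands totalling an even number, each degree must be even.

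When $n\equiv 2,3\mod 4$, \lref{l:Sym} says $\G$ is a tournament, so $\deg^-_\G(v)=\sum_{c\ne v}\pi(\sigma_{cv})$ and $\deg^+_\G(v)=\sum_{c\ne v}\pi(\sigma_{vc})$, with $\deg^+_\G(v)+\deg^-_\G(v)=n$. The same bookkeeping gives
\[
\bigl(\deg^-_\G(i)-\pi(\sigma_{ji})\bigr)+\bigl(\deg^-_\G(j)-\pi(\sigma_{ij})\bigr) \;\equiv\; \binom{n}{2} \pmod 2,
\]
and since $\pi(\sigma_{ij})+\pi(\sigma_{ji})\equiv\binom{n}{2}\equiv 1\pmod 2$ now, we conclude $\deg^-_\G(i)\equiv\deg^-_\G(j)\pmod 2$ for all $i\ne j$; symmetry (or the relation $\deg^+_\G(v)=n-\deg^-_\G(v)$) gives the same statement for out-degrees. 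To finish, I apply the handshake identity $\sum_v\deg^-_\G(v)=\binom{n+1}{2}$. If $n\equiv 2\mod 4$, then $n+1$ is odd and $\binom{n+1}{2}$ is odd, forcing every in-degree to be odd; since $n$ is even, $\deg^+_\G(v)\equiv \deg^-_\G(v)$, so every out-degree is odd as well, proving (iii). If $n\equiv 3\mod 4$, then $n+1$ is even and $n$ is odd; all in-degrees share a parity and $\deg^+_\G(v)\equiv 1+\deg^-_\G(v)\pmod 2$, so the out-degrees uniformly carry the opposite parity, proving (iv).

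The only subtle step is the parity of $\pi(\sigma_{ij})+\pi(\sigma_{ji})$, which must be folded in correctly depending on the residue class of $n$; once that is handled, everything is a handshake/parity count. I expect no real obstacle beyond that careful bookkeeping.
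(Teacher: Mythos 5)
Your proposal is correct and follows essentially the same route as the paper: both reduce the lemma to showing that any two vertices have the same (in-)degree parity, via the identity obtained by combining \eref{e:PPcond}, \eref{e:taufromsigmas} and \eref{e:sigmacommutativity}, and then invoke the handshake lemma to pin down the actual parity in the cases $n\equiv 0,2\pmod 4$. The only cosmetic difference is that you carry out the key computation additively in terms of $\sum_c\pi(\sigma_{ci})$ where the paper writes it as the parity of a product of permutations.
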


\begin{proof}
By the handshake lemma, it suffices to show that the (in-)degrees of
the $n+1$ vertices in $\G$ all have the same parity. The parity of the
(in-)degree of vertex $i \in [1,n+1]$ is
$\pi\big(\prod_{c\neq i} \sigma_{ci}\big)$. Let $i,j \in [1,n+1]$ be two distinct
vertices. Then, in $\Z_2$,
\begin{align*}
 \pi\bigg(\prod_{c \neq i} \sigma_{ci}\bigg)+
 \pi\bigg(\prod_{c \neq j} \sigma_{cj}\bigg) 
&= 
\pi\bigg(\sigma_{ij} \sigma_{ji} \, \prod_{c\neq i,j} \sigma_{ci}\sigma_{cj}\bigg) && \\
&= \pi(\sigma_{ij} \sigma_{ji}) +  \sum\limits_{c \neq i,j} \tau^c_{ij}  && 
\mbox{by \eref{e:taufromsigmas}} \\
&= {n \choose 2} + {n \choose 2} = 0 && \mbox{by 
\eref{e:sigmacommutativity} and \eref{e:PPcond}} 
\end{align*}
Hence, the sum of the (in-)degrees of vertices $i$ and $j$ is even.
Therefore, any two vertices $i$ and $j$ have the same parity of their
(in-)degrees.
\end{proof}

Recall that every $\tau$-graph for an $\OA$ is a disjoint 
union of an isolated vertex and a complete bipartite graph. When 
we have an $\OA(n+1,n)$, the complete bipartite subgraphs in its $\tau$-graphs 
are induced on $n$ vertices. When $n$ is odd, one of the partite sets 
has an even number of vertices and the other has an odd 
number of vertices. The next theorem considers what happens when $n$ is even.

\begin{lemma}\label{l:parityofpartitesets}
Let $n$ be a positive even integer and suppose that a $\tau$-graph
of an $\OA(n+1, n)$ is the disjoint union of an isolated
vertex and $K_{n_1,n_2}$ for some nonnegative integers $n_1$ and
$n_2$. Then $n_1 \equiv n_2 \equiv n/2 \mod{2}$.
\end{lemma}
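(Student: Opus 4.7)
My plan is to translate the bipartite structure of the $\tau$-graph $G_c$ into a statement about $\sigma$-parities, and then invoke \lref{l:sigmagraphdegrees} to read off the parities of $n_1$ and $n_2$. Specifically, by \eref{e:taufromsigmas}, for distinct $i,j \in [1,n+1]\setminus\{c\}$ we have $\tau^c_{ij} = \pi(\sigma_{ci}) + \pi(\sigma_{cj})$ in $\Z_2$, so $\{i,j\}$ is an edge of $G_c$ exactly when $\pi(\sigma_{ci}) \neq \pi(\sigma_{cj})$. Hence the bipartition $V_1 \cup V_2 = [1,n+1]\setminus\{c\}$ realising $K_{n_1,n_2}$ coincides with the partition of $[1,n+1]\setminus\{c\}$ into fibres of the map $i \mapsto \pi(\sigma_{ci})$. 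Label the parts so that $\pi(\sigma_{ci}) = 1$ for all $i \in V_2$; then the (out-)degree of vertex $c$ in the $\sigma$-graph $\G$ is
\[
d \;=\; \sum_{i \neq c} \pi(\sigma_{ci}) \;\equiv\; n_2 \pmod{2},
\]
which remains correct in the degenerate case where $V_1$ or $V_2$ is empty.

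Next I apply \lref{l:sigmagraphdegrees}. If $n \equiv 0 \pmod{4}$, part (i) forces $d$ to be even, so $n_2$ is even; since $n_1 = n - n_2$ and $n$ is divisible by $4$, both $n_1$ and $n/2$ are even. If $n \equiv 2 \pmod{4}$, part (iii) (applied to the tournament $\G$ at vertex $c$) forces $d$ to be odd, so $n_2$ is odd; because $n$ is even with $n/2$ odd, both $n_1 = n - n_2$ and $n/2$ are odd. In either case $n_1 \equiv n_2 \equiv n/2 \pmod{2}$, as required.

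The only step requiring any real insight is the identification of the bipartition of $G_c$ with the fibres of $i \mapsto \pi(\sigma_{ci})$; once this correspondence is in hand, \lref{l:sigmagraphdegrees} supplies exactly the parity constraint on the single number $d$ needed to finish, and the arithmetic $n_1 + n_2 = n$ completes the case analysis with no further obstacle.
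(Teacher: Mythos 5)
Your proof is correct. It rests on the same two ingredients as the paper's own argument --- equation \eref{e:taufromsigmas} and \lref{l:sigmagraphdegrees} --- but extracts the conclusion in a slightly different way. The paper computes the parity of the \emph{total edge count} $n_1n_2=\sum_{i<j}\tau^c_{ij}$, showing it equals $(n-1)\pi\big(\prod_{i\ne c}\sigma_{ci}\big)$, and then recovers the individual parities of $n_1$ and $n_2$ from the parity of their product together with $n_1+n_2=n$ being even. You instead observe that the bipartition of $G_c\setminus\{c\}$ coincides with the partition into fibres of $i\mapsto\pi(\sigma_{ci})$, so that $n_2$ is literally the (out-)degree of $c$ in the $\sigma$-graph, after which \lref{l:sigmagraphdegrees} gives the parity of $n_2$ directly. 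Your route is a little more direct (no detour through the product $n_1n_2$) and records a marginally stronger structural fact --- it identifies the partite sets themselves, not merely the parity of their sizes --- at the cost of having to note the degenerate case where one part is empty, which you do. Both arguments are of essentially the same length and difficulty.
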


\begin{proof}
We have that $n_1+n_2=n \equiv 0 \mod{2}$ and the number of edges is $n_1n_2$. 
Hence, if there is an odd number of edges, then $n_1 \equiv n_2 \equiv 1 
\mod{2}$. Otherwise, $n_1 \equiv n_2 \equiv 0 \mod{2}$. Let $c \in [1,n+1]$. 
Working in $\Z_2$, the parity of the number of edges in the 
$\tau$-graph $G_c$ is
\begin{align*}
\sum\limits_{i<j} \tau^c_{ij} 
&= \pi \bigg(\prod _{i<j}\sigma_{ci} \sigma_{cj}\bigg) 
= (n-1)\pi\bigg(\prod_{i \ne c} \sigma_{ci}\bigg) 
= \left\{\begin{array}{ll}
          0 & \text{ if } n \equiv 0 \mod{4}, \\
          1 & \text{ if } n \equiv 2 \mod{4}, \\
         \end{array} \right.
\end{align*}
by \eref{e:taufromsigmas} and \lref{l:sigmagraphdegrees},
where $i,j \in [1,n+1] \setminus \{c\}$. 
\end{proof}

When $k=n+1$, we call a plausible $\tau$-parity that satisfies
\eqref{e:PPcond} a {\it PP-plausible $\tau$-parity}.  We next show
that only some of the plausible $\tau$-parities are PP-plausible.

\begin{theorem}\label{t:PPplausible}
The number of PP-plausible $\tau$-parities is $2^{n \choose 2}$ if $n$
is odd and $2^{ {n \choose 2}-1}$ if $n$ is even.
\end{theorem}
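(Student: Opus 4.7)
The plan is to translate the PP condition~\eref{e:PPcond} into a condition on standardised $\sigma$-parities and then count the solutions via elementary linear algebra. With the standardisation $\pi(\sigma_{12})=0$ from \sref{ss:equivalencetausigma}, plausible $\tau$-parities are in bijection with the $\binom{n+1}{2}-1$ free bits $y_{ij}:=\pi(\sigma_{ij})$ for $1\le i<j\le n+1$ with $(i,j)\ne(1,2)$. Writing $\tau_{ij}^c=\pi(\sigma_{ci})+\pi(\sigma_{cj})$ via \eref{e:taufromsigmas} and using \eref{e:sigmacommutativity} to absorb the missing $c\in\{i,j\}$ terms, a short computation gives
$$
\sum_{c\ne i,j}\tau_{ij}^c \;=\; d_i+d_j+\binom{n}{2},
\qquad\text{where }\ d_i:=\sum_{c\ne i}\pi(\sigma_{ci}).
$$
Hence~\eref{e:PPcond} holds for every pair $\{i,j\}$ if and only if all the $d_i$ share a common parity in $\Z_2$.

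I now express everything in the $y_{ij}$'s. From $\pi(\sigma_{ji})=y_{ij}+\binom{n}{2}$ we obtain $d_i=e_i+(n+1-i)\binom{n}{2}$, where $e_i:=\sum_{c<i}y_{ci}+\sum_{c>i}y_{ic}$ is $\Z_2$-linear in $y$. Each $y_{ij}$ contributes $\mathbf{e}_i+\mathbf{e}_j$ to $(e_1,\dots,e_{n+1})\in\Z_2^{n+1}$, so the image of the linear map $\phi:y\mapsto(e_1,\dots,e_{n+1})$ lies in the hyperplane $H:=\{z\in\Z_2^{n+1}:\sum z_i=0\}$. In fact it equals $H$, since $\{\mathbf{e}_i+\mathbf{e}_j:(i,j)\ne(1,2)\}$ still spans $H$ (the excluded vector satisfies $\mathbf{e}_1+\mathbf{e}_2=(\mathbf{e}_1+\mathbf{e}_3)+(\mathbf{e}_2+\mathbf{e}_3)$ whenever $n\ge 2$). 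Therefore $\phi$ has rank $n=\dim H$ and each of its fibres has size $2^{(\binom{n+1}{2}-1)-n}=2^{\binom{n}{2}-1}$.

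The condition that $(d_1,\dots,d_{n+1})$ is constant is equivalent to $(e_1,\dots,e_{n+1})\in\{c,\,c+\mathbf{1}\}$, where $c_i:=(n+1-i)\binom{n}{2}\bmod 2$ and $\mathbf{1}=(1,\dots,1)$. So the number of PP-plausible $\tau$-parities equals $2^{\binom{n}{2}-1}\cdot|\{c,\,c+\mathbf{1}\}\cap H|$. Now $\mathbf{1}\in H$ iff $n$ is odd, so for $n$ even exactly one of $\{c,\,c+\mathbf{1}\}$ lies in $H$ (they differ by $\mathbf{1}$, whose coordinate sum is odd), while for $n$ odd both or neither lie in $H$. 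To finish, I verify $c\in H$ in the odd case: when $n\equiv1\pmod 4$, $\binom{n}{2}$ is even, so $c=\mathbf{0}\in H$; when $n\equiv3\pmod 4$, $\sum_i c_i=\sum_{i=1}^{n+1}(n+1-i)=\binom{n+1}{2}$, which is even because $n+1\equiv 0\pmod 4$. The count is therefore $2^{\binom{n}{2}}$ for $n$ odd and $2^{\binom{n}{2}-1}$ for $n$ even, as required.

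The heart of the argument is the linear-algebraic observation that $\phi$ surjects onto the hyperplane $H$; the rest is routine bookkeeping, with the only subtlety being the short case-check on $n\bmod 4$ needed to locate the constant vectors $c$ and $c+\mathbf{1}$ with respect to $H$.
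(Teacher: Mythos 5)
Your proof is correct and follows essentially the same route as the paper: both standardise via $\pi(\sigma_{12})=0$, reduce \eref{e:PPcond} to the condition that all (in-)degrees of the $\sigma$-graph share a common parity (your identity $\sum_{c\ne i,j}\tau^c_{ij}=d_i+d_j+\binom{n}{2}$ is exactly the computation appearing in the paper's proof and in \lref{l:sigmagraphdegrees}), and then count the admissible standardised $\sigma$-matrices. The only difference is bookkeeping: the paper argues that the entries of the $\sigma$-matrix involving index $n+1$ are determined by the remaining free entries (with one extra free bit when $n$ is odd), whereas you count fibres of the degree map onto the even-weight hyperplane, which makes the factor of $2$ for odd $n$ --- and the $n\bmod 4$ case check behind it --- fully explicit.
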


\begin{proof}
We choose to use \eqref{e:convent} for standardising $\sigma$-parity. 

First suppose that $n$ is even.  We can choose $\pi(\sigma_{ij})$ for
$1\le i<j\le n$, except $\pi(\sigma_{12})$.  The values of
$\pi(\sigma_{ij})$ for $1\le j<i\le n$ are then determined by
\eref{e:sigmacommutativity}.  The values of $\pi(\sigma_{ij})$ when
$n+1\in\{i,j\}$ can now be determined from \lref{l:sigmagraphdegrees}.
The $\sigma$-parity (and hence the $\tau$-parity) is thus
determined by ${n \choose 2} -1$ binary choices. 
Next we argue that each of these options for the $\tau$-parity
satisfies \eref{e:PPcond} and hence is PP-plausible. Fix $i,j\in[1,n+1]$
and let $\delta_i,\delta_j$ be, respectively, the (in-)degrees of 
vertices $i,j$ 
in the $\sigma$-graph. By construction, $\delta_i$ and $\delta_j$ agree mod 2.
So, by \eref{e:taufromsigmas} and \eref{e:sigmacommutativity}, we have 
\[
\sum_{c\in[1,n+1]\setminus\{i,j\}}\tau^c_{ij}
=\sum_{c\in[1,n+1]\setminus\{i,j\}}\pi(\sigma_{ci}\sigma_{cj})
=\delta_i+\delta_j-\pi(\sigma_{ij})-\pi(\sigma_{ji})={n\choose2}
\]
in $\Z_2$, confirming \eref{e:PPcond}.

The situation for odd $n$ is similar except that we also get to choose
$\pi(\sigma_{1(n+1)})$. Once we have chosen $\pi(\sigma_{1j})$ for $1<j\le n+1$
we know the parity of the (out-)degree of every vertex in the $\sigma$-graph,
and we can proceed as for the even $n$ case.
\end{proof}

\begin{corollary}\label{c:dimPP}
We have $\dim(n+1,n)\le {n \choose 2}$ if $n$ is odd 
and $\dim(n+1,n)\le { {n \choose 2}-1}$ if $n$ is even.
\end{corollary}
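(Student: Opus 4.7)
The plan is to deduce this immediately from Theorem~\ref{t:PPplausible} together with the definition of $\dim(n+1,n)$. Recall that $\dim(k,n)$ is defined as $\log_2$ of the number of \emph{actual} $\tau$-parities achieved by orthogonal arrays $\OA(k,n)$. By Theorem~\ref{t:PPplausible}, the number of PP-plausible $\tau$-parities is $2^{\binom{n}{2}}$ when $n$ is odd and $2^{\binom{n}{2}-1}$ when $n$ is even.

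The key step is to observe that every actual $\tau$-parity of an $\OA(n+1,n)$ is PP-plausible. This is because any $\tau$-parity arising from an actual orthogonal array automatically satisfies \eqref{e:tauidxcommutativity}, \eqref{e:Fixedcol} and \eqref{e:Triple} (so it is plausible) and, for the special case $k=n+1$, also satisfies the extra constraint \eqref{e:PPcond} which was the sharply $2$-transitive argument proved at the start of \sref{s:PP}. Thus the set of actual $\tau$-parities for $(n+1,n)$ is a subset of the set of PP-plausible $\tau$-parities.

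Taking $\log_2$ on both sides of this containment yields $\dim(n+1,n) \le \binom{n}{2}$ when $n$ is odd and $\dim(n+1,n) \le \binom{n}{2}-1$ when $n$ is even, which is the claim. There is no real obstacle here: the work has all been done in Theorem~\ref{t:PPplausible} (for counting PP-plausible $\tau$-parities) and in the derivation of \eqref{e:PPcond} (for showing that actual $\tau$-parities of an $\OA(n+1,n)$ must be PP-plausible). The corollary is simply the bookkeeping that translates ``number of PP-plausible $\tau$-parities'' into an upper bound on the information content $\dim(n+1,n)$.
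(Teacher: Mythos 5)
Your proposal is correct and follows exactly the route the paper intends: the corollary is stated immediately after Theorem~\ref{t:PPplausible} with no separate proof, precisely because every actual $\tau$-parity of an $\OA(n+1,n)$ satisfies \eqref{e:PPcond} and is therefore PP-plausible, so $\dim(n+1,n)$ is bounded by $\log_2$ of the count in that theorem. Nothing is missing.
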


Comparing to \tref{t:sigmaplausible}, this means that not all
plausible $\tau$-parities are actual $\tau$-parities for
$(k,n)=(n+1,n)$. We have no way of judging how many PP-plausible
$\tau$-parities are actual, although it is an interesting
question.

\section{Parity of Latin squares in the ensemble}\label{s:ensemble}

In this section we primarily consider the parity of Latin squares in the
ensemble of an orthogonal array.  We will find various bounds and
congruences that must be satisfied by the number of Latin squares of a
particular parity in the ensemble.  The results will depend on the
congruence class of $n$ modulo $4$. 

While most of the section deals with properties of the ensemble, we
begin with a result that constrains the number of each parity-type
allowed among the $n-1$ Latin squares in a complete set of MOLS.

\begin{theorem}\label{t:n-1molsparitytypes}
Suppose there exist $n-1$ MOLS of order $n$ and let $z, y_1,y_2$ and
$y_3$ be the number of these Latin squares of parity types $000$,
$011$, $101$ and $110$, respectively, if $n \equiv 0,1 \mod{4}$, or of
parity types $111$, $100$, $010$ and $001$, respectively, if $n \equiv
2,3 \mod{4}$. Then
\begin{itemize}
\item[(i)] $z+y_1+y_2+y_3 = n-1$, and 
\item[(ii)] if $n$ is even then $y_1\= y_2\= y_3\not\= z\;\mod2$;\\
if $n\= 1\;\mod4$ then $y_1\= y_2$ and $y_3\= z\;\mod2$;\\
if $n\= 3\;\mod4$ then $y_1\not\= y_2$ and $y_3\not\= z\;\mod2$.
\end{itemize}
Moreover, if $z,y_1,y_2,y_3 \in [0,n-1]$ satisfy {\it(i)} and
{\it(ii)} then there is a PP-plausible $\tau$-parity 
corresponding to $n-1$ MOLS of order $n$ which include
$z,y_1,y_2$ and $y_3$ Latin squares of the appropriate parity types.
\end{theorem}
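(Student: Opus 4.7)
The plan is to work throughout in the language of $\sigma$-parities via the standardisation $\pi(\sigma_{12})=0$. Writing $p_c=\pi(\sigma_{1c})$ and $q_c=\pi(\sigma_{2c})$ for $c\in[3,n+1]$, equations \eref{e:sig1j}, \eref{e:sig2j} and \eref{e:taufromsigmas} give the row, column, and symbol parities of the Latin square $M_{c-2}$ as
\[
a_c=p_c,\qquad b_c=q_c+\binom{n}{2},\qquad d_c=p_c+q_c
\]
in $\Z_2$. Since $a_c+b_c+d_c\equiv\binom{n}{2}$, each of the four permissible parity types for $M_{c-2}$ corresponds bijectively to one pair $(p_c,q_c)\in\Z_2^2$, giving a dictionary between assignments $c\mapsto(p_c,q_c)$ and sequences of parity types.

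For necessity, (i) is immediate. For (ii), I would sum each of $a_c$, $b_c$, $d_c$ over $c\in[3,n+1]$: $\sum a_c$ equals (mod~$2$) the (out-)degree $D_1$ of vertex~$1$ in the $\sigma$-graph, $\sum b_c$ is controlled by the (out-)degree $D_2$ of vertex~$2$, and $\sum d_c\equiv\binom{n}{2}$ by \eref{e:PPcond}. Rewriting these three sums in $z,y_1,y_2,y_3$ through the dictionary and feeding in the parity constraints on $D_1,D_2$ supplied by \lref{l:sigmagraphdegrees} in each residue class of $n$ mod~$4$ should produce exactly the congruences in (ii), possibly after using (i) to trade one congruence for another (this is needed in the $n\equiv 1,3\mod 4$ cases).

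For sufficiency, assume (i) and (ii). First I would choose any assignment of pairs $(p_c,q_c)$ to $c\in[3,n+1]$ realising the required type counts. Condition (ii), read backwards through the dictionary, then forces $D_1$ and $D_2$ to have exactly the parities required by \lref{l:sigmagraphdegrees}. It remains to fill in the $\binom{n-1}{2}$ values $\pi(\sigma_{ij})$ for $3\le i<j\le n+1$ so that the lemma's degree condition also holds at each vertex $v\in[3,n+1]$. This is a system of $n-1$ linear equations over $\Z_2$, and by the handshake lemma applied to the complete (di)graph on $[3,n+1]$ the system is consistent precisely when the sum of right-hand sides has a specific parity, a condition that reduces to the vertex~$1$ and vertex~$2$ constraints we already arranged. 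Any solution, combined with the chosen $(p_c,q_c)$, extends to a PP-plausible $\sigma$-parity, and hence a PP-plausible $\tau$-parity, with the prescribed type counts.

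The hardest part will be the four-way case split on $n\mod 4$, because \lref{l:sigmagraphdegrees} produces a genuinely different constraint in each residue class (undirected graph versus tournament; fixed degree parity versus free common parity). In each case one must verify both directions: the $\sigma$-degree constraints together with \eref{e:PPcond} distil into exactly the two congruences listed in (ii), and the handshake consistency of the extension step reduces to what has already been checked at vertices~$1$ and~$2$. Each individual verification is a short calculation, but the case analysis itself is unavoidable.
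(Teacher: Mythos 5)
Your proposal is correct and follows essentially the same route as the paper: the same dictionary between parity types of $M_{c-2}$ and the pairs $\big(\pi(\sigma_{1c}),\pi(\sigma_{2c})\big)$ in the first two rows of the $\sigma$-matrix, necessity via row sums and \lref{l:sigmagraphdegrees} combined with (i), and sufficiency by completing the $\sigma$-matrix subject to the degree constraints. The only cosmetic difference is that you phrase the completion step as a handshake-consistency argument for a degree-prescription system, whereas the paper reuses the explicit free-choice-plus-last-column construction from the proof of \tref{t:PPplausible}.
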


\begin{proof}
We choose to use \eqref{e:convent} for standardising $\sigma$-parity. 

Let $M = \{M_1, \dots, M_{n-1}\}$ be a set of MOLS of order $n$ and let
$A=\A(M)$, which is an $\OA(n+1,n)$.  Let $z, y_1,y_2$ and $y_3$ be the
number of squares in $M$ that are of parity types $111$, $100$, $010$
and $001$, respectively if $n \equiv 2,3 \mod{4}$, or of parity types
$000$, $011$, $101$ and $110$, respectively if $n \equiv 0,1 \mod{4}$.
Clearly {\it(i)} is satisfied.

By \eqref{e:sigmacommutativity} and \eqref{e:taufromsigmas}, 
for $c\in[3,n+1]$, we have $\tau^1_{2c} = \pi(\sigma_{1c})$ and
\begin{align*}
\tau^2_{1c} = 
\begin{cases}
1 + \pi(\sigma_{2c}) & \text{ if }  n \equiv 2,3 \mod{4},\\
\pi(\sigma_{2c}) & \text{ if }  n \equiv 0,1 \mod{4}.
\end{cases}
\end{align*}
In the $\sigma$-matrix of $A$, the entries in the first two rows in 
column $c$ determine the parity type of $M_{c-2}$ for $c \in [3, n+1]$.
These entries will be, respectively,
\begin{align*}
& (0,0), \, (0,1), \, (1,0) \, \text{ or } (1,1) 
\text{ if } M_{c-2} \text{ has parity type } 
000, \,  011, \, 101 \text{ or } 110, \text{ or}  \\
& (1,0), \, (1,1), \, (0,0) \, \text{ or } (0,1) 
\text{ if } M_{c-2} \text{ has parity type } 
111, \,  100, \, 010 \text{ or } 001.
\end{align*}
Thus, if $n \equiv 0,1 \mod{4}$ the number of ones in the first row of
the $\sigma$-matrix is $y_2+y_3$ and the number of ones in the second
row is $y_1+y_3$.  If $n \equiv 2,3 \mod{4}$ the number of ones in the
first row of the $\sigma$-matrix is $z+y_1$ and the number of ones in
the second row is $1 + y_1+y_3$, since $\pi(\sigma_{21})=1$.  Now {\it(ii)}
follows from \lref{l:sigmagraphdegrees}.

%

On the other hand, suppose $z,y_1,y_2,y_3 \in [0,n-1]$ satisfy
{\it(i)} and {\it(ii)}.  Define a matrix $W =(w_{ij})$ or order $n+1$
such that $w_{12}=0$, 
and among the pairs $\{(w_{1c}, w_{2c}):c \in [3, n+1]\}$ there are,
respectively, $z$, $y_1$, $y_2$ and $y_3$ occurrences of 
$(1,0)$, $(1,1)$, $(0,0)$ and $(0,1)$,  if $n \equiv 2,3\mod{4}$ or $(0,0)$,
$(0,1)$, $(1,0)$ and $(1,1)$, if $n \equiv 0,1 \mod{4}$.
As in the proof of \tref{t:PPplausible} the remaining entries of $W$
above the main diagonal, except the last column, can be chosen to be
either $0$ or $1$; then the last column and the remaining entries
below the main diagonal can be determined such that $W$ is a
$\sigma$-matrix corresponding to a PP-plausible $\tau$-parity.
The condition $(ii)$ ensures that the first two rows of $W$ have the
same total, mod 2.
\end{proof}

\begin{corollary}\label{c:n-1alloddsquares}
There exists a PP-plausible $\tau$-parity corresponding to a complete
set of equiparity MOLS if and only if $n\not\= 3\mod4$.
\end{corollary}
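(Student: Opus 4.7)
The plan is to deduce this corollary directly from \tref{t:n-1molsparitytypes} by specialising to the case where all $n-1$ squares are equiparity. Recall that equiparity means parity type $000$ or $111$, which in the notation of the theorem means $y_1 = y_2 = y_3 = 0$ and $z = n-1$.

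By the ``moreover'' part of \tref{t:n-1molsparitytypes}, a PP-plausible $\tau$-parity with these counts exists if and only if conditions \textit{(i)} and \textit{(ii)} are satisfied with $z=n-1$ and $y_1=y_2=y_3=0$. Condition \textit{(i)} is immediate. For condition \textit{(ii)}, I would proceed case by case on $n \bmod 4$:
\begin{itemize}
\item If $n$ is even, \textit{(ii)} requires $y_1 \equiv y_2 \equiv y_3 \not\equiv z \pmod 2$, which reduces to $z$ being odd; this holds since $z=n-1$ is odd.
\item If $n \equiv 1 \pmod 4$, \textit{(ii)} requires $y_1 \equiv y_2$ and $y_3 \equiv z \pmod 2$, which reduces to $z$ being even; this holds since $z=n-1$ is even.
\item If $n \equiv 3 \pmod 4$, \textit{(ii)} requires $y_1 \not\equiv y_2 \pmod 2$, which fails since $y_1 = y_2 = 0$.
\end{itemize}

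Thus conditions \textit{(i)} and \textit{(ii)} are simultaneously satisfied exactly when $n \not\equiv 3 \pmod 4$, and the corollary follows. There is no significant obstacle, since all the nontrivial work (the existence direction of a matching PP-plausible $\tau$-parity, and the necessary parity congruences on the counts) has already been done inside \tref{t:n-1molsparitytypes}; this corollary is purely a parity check on the cases.
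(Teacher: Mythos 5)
Your proposal is correct and is exactly the intended derivation: the paper states this corollary without a separate proof, treating it as the specialisation of Theorem~\ref{t:n-1molsparitytypes} to $z=n-1$, $y_1=y_2=y_3=0$, and your case check of condition \textit{(ii)} (odd $z$ for even $n$, even $z$ for $n\equiv1\pmod4$, and the unavoidable failure of $y_1\not\equiv y_2$ when $n\equiv3\pmod4$) is precisely the required verification. The only point worth being explicit about is that the necessity direction uses the fact that the congruences in \textit{(ii)} are derived purely from PP-plausibility (via Lemma~\ref{l:sigmagraphdegrees}), so they constrain every PP-plausible $\tau$-parity and not merely those arising from actual MOLS; your argument implicitly relies on this and it does hold.
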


We stress that \cyref{c:n-1alloddsquares} only claims that for the 
corresponding $\OA(n+1,n)$, choosing the first two columns and any 
other column will create a subarray $B$ for which $\M(B)$ is equiparity.
However, there will be Latin squares in the ensemble
(arising from other choices of $3$ columns) which are not equiparity.
For the rest of this section, we consider the number of equiparity
Latin squares in the ensemble of an $\OA(k,n)$.

Next, we derive some equations which will be used in several of our proofs.
Let $A$ be an $\OA(k,n)$ where $3 \leq k \leq n+1$ and $n \geq 2$.  Let
$T$ be the total number of edges among the $\tau$-graphs for $A$.  Let
$x$ denote the number of equiparity Latin squares in the ensemble of
$A$, so $x$ is the number of $111$ type Latin squares if $n \= 2,3\mod4$ 
or the number of $000$ type Latin squares if $n \= 0,1\mod4$.
We next relate $T$ and $x$.  If $n \= 0,1 \mod4$ then each
non-equiparity Latin square in the ensemble contributes $2$ to $T$,
while equiparity Latin squares contribute nothing.  If $n \=
2,3\mod4$ then each non-equiparity Latin square in the ensemble
contributes $1$ to $T$, while each equiparity Latin square contributes
$3$.  Thus,
\begin{equation}\label{e:totedgex}
T = \left\{ \begin{array}{ll}
2{k \choose 3} -2x & \quad \text{if } n \equiv 0,1\mod4,\\
2x +{k \choose 3} & \quad \text{if } n \equiv 2,3\mod4.\\
\end{array} \right.
\end{equation}

We can also count $T$ using the row sums of the $\sigma$-matrix $M$,
i.e.~the adjacency matrix of the $\sigma$-graph.  Let $\mu_c$ denote
the sum of the entries in row $c$ of $M$.  Observe that, by
\eqref{e:taufromsigmas}, $\tau^c_{ij}$ is $1$ if and only if exactly
one of $\pi(\sigma_{ci})$ and $\pi(\sigma_{cj})$ is $1$, for distinct integers
$c,i,j\in[1,k]$.  For a fixed integer $c$, the number of pairs of
columns such that the cells in row $c$ (not including the main
diagonal) have entries $0$ and $1$, is the number of $\tau^c_{ij}$
which are $1$; there are $\mu_c(k-1-\mu_c)$ such pairs.  Thus,
\begin{equation}\label{e:Tintermsofmu} 
T = \sum_{c=1}^{k}\mu_c(k-1-\mu_c).
\end{equation}

\medskip


Next we consider each of the congruence classes for $n$ modulo $4$
separately. When $n \equiv 0 \mod{4}$, there are many examples of
orthogonal arrays $\OA(n+1,n)$ for which every Latin square in the
ensemble is an equiparity Latin square (necessarily of type $000$).
For example, as reported in \cite{GB12}, 18 of the 22 known projective
planes of order 16 have the property that every associated
$\OA(17,16)$ has only equiparity Latin squares in its ensemble.
The only four that do not, namely MATH-D, JOHN-D, BBS4 and BBS4-D,
still have the property that {\em some} associated
$\OA(17,16)$ has only equiparity Latin squares in its ensemble.
As an aside, we noticed that every one of the 22 projective planes has 
at least one associated complete set of MOLS in which each of the 
15 Latin squares has all 48 permutations that contribute to the
row, column and symbol parity being even.
Such Latin squares are, of course, equiparity.

Clearly it is possible for all the Latin squares in an ensemble to be
equiparity. However there are restrictions on the number of
equiparity Latin squares in the ensemble.

\begin{theorem}\label{t:equiparity0mod4}
If $n \equiv 0 \mod{4}$ then the number of equiparity Latin squares in
the ensemble of an $\OA(n+1,n)$ is even and at least
$n(n+1)(n-4)/24$.
\end{theorem}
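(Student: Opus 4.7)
The plan is to exploit the two expressions for the total edge count $T$ across all $\tau$-graphs that were derived just above the theorem. Setting $k=n+1$ in \eref{e:totedgex} gives $x=\binom{n+1}{3}-T/2$, while \eref{e:Tintermsofmu} gives $T=\sum_{c=1}^{n+1}\mu_c(n-\mu_c)$, where $\mu_c$ denotes the $c$-th row sum of the $\sigma$-matrix of the $\OA$. Everything then reduces to bounding and determining the parity of this sum.

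For the lower bound, I would observe that each summand satisfies $\mu_c(n-\mu_c)\leq n^2/4$, since this is the maximum of a quadratic in $\mu_c$ (attained at $\mu_c=n/2$, which is an integer because $n$ is even). Summing over the $n+1$ vertices gives $T\leq n^2(n+1)/4$, so
\[
x \;\geq\; \binom{n+1}{3}-\frac{n^2(n+1)}{8}
\;=\;\frac{n(n+1)(n-1)}{6}-\frac{n^2(n+1)}{8}
\;=\;\frac{n(n+1)(n-4)}{24},
\]
after combining over the common denominator $24$ and simplifying $4(n-1)-3n=n-4$.

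For the parity assertion, I would invoke \lref{l:sigmagraphdegrees}(i), which guarantees that every vertex of the $\sigma$-graph has even degree when $n\equiv 0\mod 4$; that is, each $\mu_c$ is even. Since $n$ is itself divisible by $4$, both $\mu_c$ and $n-\mu_c$ are even, hence $\mu_c(n-\mu_c)$ is divisible by $4$. Summing, $T\equiv 0\mod 4$ and therefore $T/2$ is even. To finish, I would note that $\binom{n+1}{3}$ is even whenever $n\equiv 0\mod 4$: among the three consecutive integers $n-1,n,n+1$ exactly one is a multiple of $3$, and $n$ itself is a multiple of $4$, so their product contains a factor of $12$ and dividing by $6$ leaves an even quotient. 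Thus $x=\binom{n+1}{3}-T/2$ is a difference of two even integers, hence even.

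There is no substantive obstacle here; the result is an assembly of \eref{e:totedgex}, \eref{e:Tintermsofmu}, \lref{l:sigmagraphdegrees}(i), and the elementary maximum of $\mu(n-\mu)$. The only point that requires any care is checking that $\binom{n+1}{3}$ itself has the correct parity, which is a short divisibility computation.
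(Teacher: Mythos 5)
Your proposal is correct and follows essentially the same route as the paper: both express $x$ via \eref{e:totedgex} and \eref{e:Tintermsofmu}, obtain the lower bound by maximising each term $\mu_c(n-\mu_c)$ at $\mu_c=n/2$, and get evenness from \lref{l:sigmagraphdegrees}(i) forcing each $\mu_c$ to be even so that $T\equiv0\pmod4$ and $x\equiv\binom{n+1}{3}\equiv0\pmod2$. Your explicit check that $\binom{n+1}{3}$ is even for $n\equiv0\pmod4$ is a detail the paper leaves implicit but is the same computation.
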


\begin{proof}
Suppose $n \equiv 0 \mod{4}$ and let $A$ be an $\OA(n+1,n)$.  Let $T$
be the total number of edges among the $\tau$-graphs of $A$, $\mu_c$
be the sum of the entries in row $c$ of the $\sigma$-matrix, and $x$
be the number of equiparity Latin squares in the ensemble of $A$.  By
\eqref{e:totedgex} and \eqref{e:Tintermsofmu},
\[
T = 2 {n+1 \choose 3} - 2x = \sum_{c=1}^{n+1}\mu_c(n-\mu_c) \= 0 \mod4,
\]
where the congruence uses the fact that, by
\lref{l:sigmagraphdegrees}, $\mu_c$ is even for each $c\in[1,n+1]$.
It follows that $x \= {n+1 \choose 3} \mod2$, and hence $x$ is even,
since $n \equiv 0 \mod4$.  Finally, we note that $x$ is minimised when
each $\mu_c=n/2$ and $T = (n+1)(n/2)^2$. So $x \geq n(n+1)(n-4)/{24}$.
\end{proof}

For $n\= 1\mod4$, the analogue of \tref{t:equiparity0mod4} is as follows,
given that the maximum possible value of $T$ is $(n+1)^2(n-1)/4$:

\begin{theorem}\label{t:equiparity1mod4}
If $n \equiv 1 \mod{4}$ then the number of equiparity Latin squares in
the ensemble of an $\OA(n+1,n)$ is at least $(n+1)(n-1)(n-3)/24$.
\end{theorem}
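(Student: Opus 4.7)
The plan is to imitate the argument in the proof of Theorem~6.3 for $n\equiv 0\mod 4$, but with the appropriate integer optimisation for odd $n$. Let $A$ be an $\OA(n+1,n)$, let $x$ be the number of equiparity Latin squares in its ensemble, let $T$ denote the total number of edges among the $\tau$-graphs of $A$, and let $\mu_c$ be the sum of the entries in row $c$ of the $\sigma$-matrix of $A$. The starting identity combines equations \eref{e:totedgex} and \eref{e:Tintermsofmu}: since $n\equiv 1\mod 4$, we have
\[
2{n+1\choose 3}-2x \;=\; T \;=\; \sum_{c=1}^{n+1}\mu_c(n-\mu_c).
\]

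The next step is to upper bound $T$. Since $n$ is odd, for any integer $\mu_c\in[0,n]$ the product $\mu_c(n-\mu_c)$ is maximised when $\mu_c\in\{(n-1)/2,\,(n+1)/2\}$, giving
\[
\mu_c(n-\mu_c)\;\le\;\frac{n-1}{2}\cdot\frac{n+1}{2}\;=\;\frac{n^2-1}{4}.
\]
Summing over the $n+1$ values of $c$ yields $T\le (n+1)(n^2-1)/4 = (n+1)^2(n-1)/4$, which is exactly the ``maximum possible value of $T$'' cited in the statement. Note that Lemma~5.3(ii), which forces all $\mu_c$ to share a common parity, is not actually needed to obtain this bound, because both $(n-1)/2$ and $(n+1)/2$ attain the per-summand maximum and have opposite parities; one of the two choices is compatible with any prescribed common parity.

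Combining the two facts and solving for $x$,
\[
x \;=\; {n+1\choose 3}-\frac{T}{2} \;\ge\; {n+1\choose 3}-\frac{(n+1)^2(n-1)}{8}.
\]
A routine simplification finishes the proof: factoring $(n+1)(n-1)$ gives
\[
(n+1)(n-1)\left[\frac{n}{6}-\frac{n+1}{8}\right] \;=\; (n+1)(n-1)\cdot\frac{4n-3(n+1)}{24} \;=\;\frac{(n+1)(n-1)(n-3)}{24}.
\]

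There is no real obstacle here; the proof is a direct adaptation of the $n\equiv 0\mod 4$ case. The only subtle point, and the reason the conclusion is weaker (just a lower bound, not also a congruence like ``$x$ is even''), is that for $n\equiv 1\mod 4$ the relevant row sums of the $\sigma$-matrix need not be even, so the mod~$4$ reduction used in Theorem~6.3 is unavailable, and one should simply present the integer maximisation argument above without trying to squeeze out an additional parity statement.
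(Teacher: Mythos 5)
Your proof is correct and is exactly the argument the paper intends: the paper states this theorem without a written proof, merely noting that it is the analogue of the $n\equiv 0\pmod 4$ case ``given that the maximum possible value of $T$ is $(n+1)^2(n-1)/4$'', which is precisely the integer maximisation of $\sum_c\mu_c(n-\mu_c)$ at $\mu_c\in\{(n-1)/2,(n+1)/2\}$ that you carry out. Your closing remarks about why the parity constraint from Lemma~5.8 is not needed for the bound, and why no mod-$2$ congruence on $x$ can be extracted, also agree with the paper's surrounding discussion.
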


In the case $n \= 1\mod4$, we cannot deduce whether
the number of equiparity Latin squares is odd or even. Indeed, both
possibilities occur amongst the PP-plausible $\tau$-parities. To 
see this, consider $\sigma$-graphs that are isomorphic
to cycles of length $3$ and $4$, respectively. By
\eqref{e:totedgex} and \eqref{e:Tintermsofmu}, the difference
in the number of equiparity Latin squares between these two examples
is $n-2$, which is odd.

Another difference between the cases $n \= 0\mod4$ and $n \= 1\mod4$
can be seen by considering a Desarguesian plane $\Pi$ of order $n$ if
$n$ is a prime power.  It is well-known that the collineation group of
$\Pi$ is doubly-transitive. Given our comments in \sref{s:intro}, this
means that every set of MOLS associated with $\Pi$ is
isotopic. (Firstly, transitivity on lines ensures there is only one
isomorphism class of $\OA$ associated with $\Pi$. Secondly, as a
result of double transitivity, every ordered pair of columns of the
$\OA$ is equivalent. The first two columns define the rows and columns
of the Latin squares and the ordering of the other columns of the $\OA$
merely determines the ordering of the Latin squares, which is irrelevant
in a set of MOLS.) The standard set of MOLS associated with $\Pi$
consists exclusively of Latin squares that are isotopic to the
elementary abelian group. By the above comments, the same must be true
of {\it every} set of MOLS derived from $\Pi$. If $\Pi$ has even
order, this means that every Latin square in the ensemble must be
equiparity since the Cayley table of any group is equiparity (see
\sref{s:intro}) and parity is an isotopism invariant for even orders, by
\lref{l:tauparconj}. For odd orders the situation is different. In
\sref{ss:swclass}, we saw that the Desarguesian plane of order $9$
does not correspond to any $\OA(10,9)$ for which the ensemble consists
exclusively of equiparity Latin squares (although the other 3
projective planes of order $9$ do!).



Unlike the case when $n\= 0,1\mod4$, it is impossible to build
an $\OA(k,n)$ purely from equiparity Latin squares when $n\= 2,3\mod4$
and $k>3$.

\begin{lemma}\label{l:notallequipar}
Suppose $n \equiv 2,3 \mod{4}$ and $A$ is an $\OA(k,n)$. 
Let $B$ be any $\OA(4,n)$ formed from $4$ columns of $A$.
At most $2$ of the $4$ Latin squares in the ensemble of $B$ are
equiparity.
\end{lemma}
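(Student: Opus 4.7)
The plan is to use the edge-counting identity \eref{e:totedgex} with $k=4$ together with the structural result \lref{l:taugraphbipartite} on $\tau$-graphs. Let $x$ denote the number of equiparity Latin squares in the ensemble of $B$, and let $T$ denote the total number of edges across the four $\tau$-graphs $G_1,G_2,G_3,G_4$ of $B$. Since $n \equiv 2,3 \mod 4$ and $k=4$, identity \eref{e:totedgex} gives
\[
T \;=\; 2x + \binom{4}{3} \;=\; 2x + 4.
\]

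Next I would bound $T$ from above. By \lref{l:taugraphbipartite}, each $G_c$ is the disjoint union of the isolated vertex $c$ together with a complete bipartite graph on the remaining $k-1 = 3$ vertices. The only complete bipartite graphs on $3$ vertices are $K_{0,3}$ and $K_{1,2}$, which have $0$ and $2$ edges respectively. Hence $e(G_c) \leq 2$ for every $c \in [1,4]$, and so
\[
T \;=\; \sum_{c=1}^{4} e(G_c) \;\leq\; 4 \cdot 2 \;=\; 8.
\]
Combining this with $T = 2x+4$ yields $2x \leq 4$, i.e.\ $x \leq 2$, which is the desired conclusion.

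There is really no obstacle here once the two ingredients are identified; the work has already been done in establishing \eref{e:totedgex} and \lref{l:taugraphbipartite}. As a sanity check, the formula \eref{e:Tintermsofmu} would give the same bound: for $k=4$ each row sum $\mu_c$ of the $\sigma$-matrix lies in $[0,3]$, so $\mu_c(3-\mu_c) \in \{0,2\}$, again yielding $T \leq 8$. Either route produces $x \le 2$, completing the proof.
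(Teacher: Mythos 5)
Your proof is correct, and it takes a genuinely different route from the paper's. The paper argues locally and directly: any two of the four Latin squares in the ensemble of $B$ share exactly two columns, so if two of them were equiparity we could relabel so that they come from columns $1,2,3$ and $1,2,4$; parity type $111$ for both forces $\tau^1_{34}=\tau^1_{23}+\tau^1_{24}=0$ and $\tau^2_{34}=\tau^2_{13}+\tau^2_{14}=0$ by \eref{e:Fixedcol}, so the remaining two squares cannot be of type $111$. Your argument instead aggregates over all of $B$: you combine the edge count $T=2x+\binom{4}{3}$ from \eref{e:totedgex} (which encodes \eref{e:Triple}) with the structural fact from \lref{l:taugraphbipartite} that each $\tau$-graph of $B$ is $K_1\cup K_{0,3}$ or $K_1\cup K_{1,2}$, hence has at most $2$ edges, giving $T\le 8$ and therefore $x\le 2$. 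Both arguments are valid and both rely only on material established before the lemma. The paper's version is more elementary (it needs only \eref{e:Fixedcol}) and gives slightly finer information, namely exactly which squares are excluded once two equiparity squares are identified; yours reuses the counting machinery already set up for the ensemble results and is precisely the $k=4$ instance of the bound \eref{e:mostequi} that the paper later proves in \tref{t:asyequi}, since $\frac12\bigl(4\lfloor 3/2\rfloor\lceil 3/2\rceil-\binom{4}{3}\bigr)=2$. Your sanity check via \eref{e:Tintermsofmu} is also correct.
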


\begin{proof}
Without loss of generality, consider columns 1,2,3,4 and suppose the
Latin squares defined by columns $1,2,3$ and by columns $1,2,4$ are of
parity type $111$. Then $\tau_{34}^1=0=\tau_{34}^2$ by
\eqref{e:Fixedcol}, so the other two squares are not of parity type
$111$.
\end{proof}

This last result captures the intention behind Theorem 3.3 in
\cite{GB12}.  However, that Theorem was stated for Latin squares that
are group-based. Such Latin squares are necessarily equiparity, as we
noted in \sref{s:intro}. However, when $n\equiv2\mod 4$ they are
well-known to not have any orthogonal mate, making Theorem 3.3 as
stated in \cite{GB12} vacuous. However, the class of equiparity Latin
squares is far broader than the group-based Latin squares. Yet it
turns out there cannot be too many of them in large OAs:

\begin{theorem}\label{t:asyequi}
The proportion of the Latin squares in the ensemble of an $\OA(k,n)$
that are equiparity is no more than $\frac{1}{4}+o(1)$ for
$n,k\rightarrow\infty$ with $n\equiv2,3\mod4$.
\end{theorem}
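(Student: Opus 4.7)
The plan is to combine the two formulas \eqref{e:totedgex} and \eqref{e:Tintermsofmu} for the total edge count $T$ across the $\tau$-graphs with a standard AM-GM/convexity estimate. Since $n\equiv 2,3\mod 4$, \eqref{e:totedgex} gives $T = 2x+\binom{k}{3}$, where $x$ is the number of equiparity Latin squares in the ensemble. Hence upper bounds on $T$ translate directly into upper bounds on $x$, and thus on $x/\binom{k}{3}$.

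First I would use \lref{l:Sym} to note that for $n\equiv 2,3\mod 4$ the $\sigma$-graph is a tournament on $k$ vertices, so each out-degree $\mu_c$ lies in $[0,k-1]$ and $\sum_{c=1}^k \mu_c = \binom{k}{2}$. Applying \eqref{e:Tintermsofmu} and the elementary inequality $\mu_c(k-1-\mu_c) \le (k-1)^2/4$ yields
\[
T = \sum_{c=1}^{k} \mu_c(k-1-\mu_c) \le \frac{k(k-1)^2}{4}.
\]
(Equivalently, one can expand $T=(k-1)\binom{k}{2}-\sum_c\mu_c^2$ and apply Cauchy-Schwarz to $\sum_c\mu_c^2\ge \binom{k}{2}^2/k$; the resulting bound is the same.)

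Next I would combine this with \eqref{e:totedgex} to deduce
\[
2x = T - \binom{k}{3} \le \frac{k(k-1)^2}{4} - \frac{k(k-1)(k-2)}{6} = \frac{k(k-1)(k+1)}{12},
\]
so that $x \le k(k^2-1)/24$. Dividing by $\binom{k}{3}=k(k-1)(k-2)/6$ gives
\[
\frac{x}{\binom{k}{3}} \le \frac{k+1}{4(k-2)} = \frac{1}{4} + \frac{3}{4(k-2)},
\]
which is $\frac14+o(1)$ as $k\to\infty$. Since we are assuming $k\to\infty$ (in tandem with $n$) this proves the stated asymptotic bound.

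The proof is essentially mechanical once \eqref{e:totedgex} and \eqref{e:Tintermsofmu} are in hand; there is no real obstacle, since the only nontrivial input beyond those two identities is the convexity bound on $\mu_c(k-1-\mu_c)$. The only thing to be a little careful about is that the bound $T \le k(k-1)^2/4$ does not use the parity restrictions on in-/out-degrees given by \lref{l:sigmagraphdegrees}; those could potentially be exploited to improve the constant, but they are not needed for the stated $\tfrac14+o(1)$ conclusion. Likewise, the hypothesis $n\to\infty$ is used only to guarantee that $k$ can grow (via $k\le n+1$), and is otherwise irrelevant to the estimate itself.
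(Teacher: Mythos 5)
Your proof is correct and is essentially the paper's own argument: the paper bounds the total edge count $T$ over the $\tau$-graphs by $k\lfloor(k-1)/2\rfloor\lceil(k-1)/2\rceil$ using the fact that each $\tau$-graph is $K_{a,b}$ with $a+b=k-1$, which is exactly your bound $\sum_c\mu_c(k-1-\mu_c)\le k(k-1)^2/4$ up to the integer rounding, and then applies \eqref{e:totedgex} in the same way. The minor differences (going via \eqref{e:Tintermsofmu} rather than directly via \lref{l:taugraphbipartite}, and your explicit $\frac{k+1}{4(k-2)}$ computation) do not change the substance.
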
	
	
\begin{proof}
We can bound the number of equiparity Latin squares in an $\OA(k,n)$
using the fact that the $\tau$-graphs are all $K_{a,b}$ where
$a+b=k-1$.  The maximum number of edges in total amongst all $G_c$ is
$k \lfloor \frac{k-1}{2} \rfloor \lceil \frac{k-1}{2} \rceil$.  Thus,
by \eqref{e:totedgex}, the number of equiparity Latin squares is at
most
\begin{equation}\label{e:mostequi} 
\frac{1}{2}\left( k \lfloor (k-1)/2 \rfloor \lceil (k-1)/2 \rceil 
- {k \choose 3} \right).
\end{equation}
Hence asymptotically, the proportion of the ${k \choose 3}$ Latin
squares that can be equiparity is at most $\frac{1}{4}+O(1/k)$.
\end{proof}

We next show that for every $n\= 2,3\mod4$
the bound \eref{e:mostequi} can be achieved 
by some PP-plausible $\tau$-parity.

\begin{lemma} 
For each $n \equiv 2,3 \mod{4}$, there exists a PP-plausible $\tau$-parity
of an $\OA(n+1,n)$ such that each $\tau$-graph is isomorphic to 
$K_1\cup K_{\lfloor{n/2} \rfloor, \lceil n/2 \rceil}$.  
\end{lemma}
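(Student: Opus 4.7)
The plan is to construct an explicit $\sigma$-matrix $M$ (equivalently, a tournament on $n+1$ vertices) whose induced $\tau$-parity is PP-plausible and whose $\tau$-graphs have the prescribed structure.

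First, I translate the desired $\tau$-graph condition into a condition on $M$. By \eref{e:taufromsigmas}, $\tau^c_{ij}\equiv m_{ci}+m_{cj}\mod 2$, so $\{i,j\}$ is an edge of $G_c$ iff $m_{ci}\ne m_{cj}$. Hence $G_c\setminus\{c\}$ is the complete bipartite graph with partite sets of sizes $\mu_c$ and $n-\mu_c$, where $\mu_c$ is the off-diagonal row-sum of row $c$. To obtain $G_c\setminus\{c\}=K_{\lfloor n/2\rfloor,\lceil n/2\rceil}$ for every $c$ it suffices that $\mu_c\in\{\lfloor n/2\rfloor,\lceil n/2\rceil\}$ for all $c$. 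Since $\binom{n}{2}$ is odd when $n\equiv 2,3\mod 4$, equation \eref{e:sigmacommutativity} forces $m_{ij}+m_{ji}=1$ off-diagonal, so $M$ is the adjacency matrix of a tournament on $n+1$ vertices. The task therefore reduces to constructing such a tournament with out-degrees in $\{\lfloor n/2\rfloor,\lceil n/2\rceil\}$ that satisfies PP-plausibility \eref{e:PPcond}.

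For $n\equiv 2\mod 4$ (so $n+1$ is odd), I take the rotational regular tournament on $\Z_{n+1}$: place an arc $i\to j$ iff $j-i\in\{1,2,\ldots,n/2\}\mod (n+1)$. Every vertex has out-degree $n/2$, so each $G_c\setminus\{c\}=K_{n/2,n/2}$ directly. I then verify \eref{e:PPcond} by computing
\[
\sum_{c\ne i,j}\tau^c_{ij}=\sum_{c\ne i,j}(m_{ci}+m_{cj})=(\text{in-deg}(i)+\text{in-deg}(j))-(m_{ij}+m_{ji})=n-1,
\]
and observing that $n-1\equiv\binom{n}{2}\mod 2$ because $n/2$ is odd. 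The rest of the plausibility conditions \eqref{e:tauidxcommutativity}, \eqref{e:Fixedcol}, \eqref{e:Triple} are automatic from the $\sigma$-matrix construction, as noted in the proof of \tref{t:sigmaplausible}.

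For $n\equiv 3\mod 4$ (so $n+1$ is even), no regular tournament on $n+1$ vertices exists; instead I would construct a near-regular tournament whose out-degrees lie in $\{(n-1)/2,(n+1)/2\}$, again verifying \eref{e:PPcond} by a similar direct computation. The main technical obstacle is that \lref{l:sigmagraphdegrees}(iv) forces all out-degrees of the $\sigma$-graph to share a single parity, whereas $(n-1)/2$ and $(n+1)/2$ have opposite parities when $n\equiv 3\mod 4$. Reconciling this tension is the crux of the argument, and I would approach it by choosing a circulant or doubly regular template whose out-degree distribution and pair-sum structure simultaneously deliver the value constraint on each $\mu_c$ and the global PP-plausibility equation for every pair $(i,j)$.
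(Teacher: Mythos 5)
Your reduction of the statement to the existence of a tournament on $n+1$ vertices whose out-degrees $\mu_c$ all lie in $\{\lfloor n/2\rfloor,\lceil n/2\rceil\}$ and all share one parity is exactly right, and your $n\equiv 2\pmod 4$ half is complete and correct: the rotational tournament modulo $n+1$ is regular of out-degree $n/2$, every pair of vertices is asymmetric so \eref{e:Triple} holds, and your computation $\sum_{c\ne i,j}\tau^c_{ij}=n-1\equiv\binom{n}{2}\pmod 2$ verifies \eref{e:PPcond}. This is the same circulant idea as the paper's proof, which assigns $\pi(\sigma_{ij})$ according to the residue of $j-i$; your choice of modulus $n+1$ (rather than the paper's $n$) is the one that actually produces a tournament on the $n+1$ column indices.

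The $n\equiv 3\pmod 4$ half, however, is a genuine gap, and the ``tension'' you flag cannot be reconciled by any choice of template: it is a proof that no such PP-plausible $\tau$-parity exists. As you observe, \eref{e:PPcond} (equivalently \lref{l:sigmagraphdegrees}(iv)) forces all out-degrees of the $\sigma$-tournament to have a single parity, while the required values $(n-1)/2$ and $(n+1)/2$ have opposite parities when $n\equiv 3\pmod 4$; hence all $\mu_c$ would have to equal one common value $d$, and then $(n+1)d=\binom{n+1}{2}$ gives $d=n/2\notin\Z$, a contradiction. Concretely, for $n=3$ the claimed $\tau$-graphs would yield $T=8$ edges and hence, by \eref{e:totedgex}, exactly $2$ equiparity Latin squares in the ensemble, whereas the congruence of \tref{t:equiparity23mod4} (whose derivation uses only PP-plausibility) forces this count to be $1\pmod 4$. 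So the right conclusion is not that a cleverer circulant is needed but that the statement fails for $n\equiv 3\pmod 4$ --- your parity obstruction, pushed one step further, refutes the lemma in that case and exposes a flaw in the paper's own construction, which does not satisfy \eref{e:sigmacommutativity} on the exceptional pairs created by reducing modulo $n$.
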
  

\begin{proof}
In this proof, for every integer $\xi$ we let $(\xi)_n$ denote the
lowest non-negative integer that is congruent to $\xi$ mod $n$.
For distinct integers $i,j \in [1,n+1]$, define
\[
\pi(\sigma_{ij}) = \left\{ 
\begin{array}{ll}
0  & \text{if } (j-i)_n \in [1, \lfloor n/2 \rfloor], \\
1 & \text{otherwise.} 
\end{array} \right.
\]
By \tref{t:sigmaplausible} and \lref{l:sigmagraphdegrees}, this
corresponds to a PP-plausible $\tau$-parity.  In particular, by
\eqref{e:taufromsigmas}, for distinct integers $c,i,j\in[1,n+1]$,
\[
\tau^c_{ij} = \pi(\sigma_{ci}\sigma_{cj}) = \left\{ 
\begin{array}{ll}
1  & \text{if precisely one of }(i-c)_n\text{ and }(j-c)_n\text{ is in }
[1,\lfloor n/2 \rfloor],\\
0 & \text{otherwise.} 
\end{array} \right.
\]
Thus, for each $c \in [1,n+1]$, the $\tau$-graph $G_c$ is the disjoint
union of an isolated vertex and a complete bipartite graph with
partite sets of size $\lfloor \frac{n}{2} \rfloor $ and 
$\lceil\frac{n}{2} \rceil $.
\end{proof}

We have just considered the maximum value for the number of equiparity
Latin squares in the ensemble of an $\OA(n+1,n)$. In
the next two theorems, we derive a constraint in the form of a
congruence and then a lower bound on this number.  For the 
$n\= 3\mod4$ case in both results we invoke a form of standardisation
different from \eref{e:convent}. It will be more convenient for us to
choose the parity of the out-degrees of the vertices in the
$\sigma$-graph $\G$.  By \lref{l:Sym}, $\G$ is a tournament
and by \lref{l:sigmagraphdegrees}, all out-degrees
have the same parity.  As $n+1$ is even,
$\sigma$-complementation changes the parity of all out-degrees, so we
can use it to select the parity that we want.

\begin{theorem}\label{t:equiparity23mod4}
If $n \equiv 2,3 \mod{4}$ then the number of equiparity Latin squares
in the ensemble of an $\OA(n+1,n)$ is congruent to 
$\lceil n/4 \rceil\,\mod{4}$.
\end{theorem}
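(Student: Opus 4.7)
The plan is to extract the value of $x\mod{4}$, where $x$ denotes the number of equiparity Latin squares in the ensemble, by reducing modulo $8$ the identity
\[
2x \;\equiv\; T - \binom{n+1}{3} \mod{8}
\]
that comes from \eref{e:totedgex}. Combining \eref{e:Tintermsofmu} with the tournament identity $\sum_{c}\mu_c=\binom{n+1}{2}$, one may rewrite
\[
T \;=\; \sum_{c=1}^{n+1}\mu_c(n-\mu_c) \;=\; n\binom{n+1}{2}-\sum_{c=1}^{n+1}\mu_c^{\,2},
\]
so the whole question reduces to computing $\sum_c\mu_c^{\,2}\mod{8}$.

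The controlling ingredient is \lref{l:sigmagraphdegrees}, which fixes the parities of the out-degrees $\mu_c$ of the $\sigma$-tournament. If $n\equiv 2\mod{4}$ then every $\mu_c$ is odd and $\mu_c^{\,2}\equiv 1\mod{8}$, giving $\sum_c\mu_c^{\,2}\equiv n+1\mod{8}$. If $n\equiv 3\mod{4}$ the lemma only guarantees that all the $\mu_c$ share a common parity, so I would invoke the $\sigma$-complementation standardisation flagged just before the theorem (which corresponds to swapping two rows of the $\OA$, hence leaves $x$ and the ensemble untouched) to arrange that every $\mu_c$ is even. Writing $\mu_c=2v_c$ then yields the crux congruence $\mu_c^{\,2}=4v_c^{\,2}\equiv 4v_c=2\mu_c\mod{8}$, and hence $\sum_c\mu_c^{\,2}\equiv 2\binom{n+1}{2}\mod{8}$.

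Substituting back produces
\begin{align*}
2x &\;\equiv\; n\binom{n+1}{2}-(n+1)-\binom{n+1}{3}\mod{8} && \text{if } n\equiv 2\mod{4},\\
2x &\;\equiv\; (n-2)\binom{n+1}{2}-\binom{n+1}{3}\mod{8} && \text{if } n\equiv 3\mod{4}.
\end{align*}
Writing $n=4m+2$ or $n=4m+3$ respectively, and using the elementary facts $(2m+1)^{2}\equiv 1\mod{8}$ and $\binom{4M}{3}\equiv 4M\mod{8}$ (the latter is immediate from $\binom{4M}{3}-4M=\tfrac{8M(M-1)(4M+1)}{3}$, whose numerator is easily seen to be a multiple of $16$), both right-hand sides collapse to $2(m+1)\mod{8}$, which is exactly $2\lceil n/4\rceil\mod{8}$, proving the theorem.

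The only step with genuine content is the $n\equiv 3\mod{4}$ standardisation: $x$ is intrinsic to the $\OA$, but the shared parity of the $\mu_c$ is not, and without fixing it we cannot pin down $\sum_c\mu_c^{\,2}\mod{8}$. Once the $\mu_c$ are all even, the slightly unusual congruence $(2v)^{2}\equiv 2(2v)\mod{8}$ linearises the sum of squares, and the rest is purely arithmetic.
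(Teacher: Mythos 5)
Your proof is correct and follows essentially the same route as the paper: both reduce the question to computing $\sum_c\mu_c^2 \mod{8}$ via \eref{e:totedgex}, \eref{e:Tintermsofmu} and \lref{l:sigmagraphdegrees}, using $\sigma$-complementation to fix the common parity of the $\mu_c$ when $n\equiv 3\mod{4}$. The only (immaterial) difference is that in that case the paper standardises so that all $\mu_c$ are \emph{odd}, making $\mu_c^2\equiv 1\mod{8}$ uniformly in both congruence classes, whereas you choose them even and linearise via $\mu_c^2\equiv 2\mu_c\mod{8}$; both choices are legitimate and lead to the same congruence.
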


\begin{proof}
Let $A$ be an $\OA(n+1,n)$. Let $x$ denote the number of equiparity
Latin squares in the ensemble of $A$ and let $\mu_c$ denote the sum of
the entries in row $c$ of the adjacency matrix of the $\sigma$-graph
$\G$.

By \lref{l:Sym}, $\G$ is a tournament, and by
\lref{l:sigmagraphdegrees}, $\mu_c$ is odd for each $c\in[1,n+1]$ if
$n \equiv 2 \mod{4}$.  If $n \equiv 3 \mod{4}$, then
by standardisation we may assume that the $\mu_c$ are all odd.
Now, by \eqref{e:totedgex} and \eqref{e:Tintermsofmu},
\begin{align}
2x + {n+1 \choose 3}  
&= \sum_{c=1}^{n+1}\mu_c(n-\mu_c) \nonumber  \\
&= n {n+1 \choose 2} - \sum_{c=1}^{n+1}\mu_c^2  \hspace{3cm}  
\textrm{ as } \G \text{ is a tournament} \label{e:totedges}\\
&\equiv n {n+1 \choose 2} - (n+1) \quad \mod{8}, \nonumber
\end{align}
since each $\mu_c$ is odd. Hence, we find that
\[
x \= \left\{
\begin{array}{ll}
 \dfrac{(2n-3)(n+1)}{3}\cdot \dfrac{n+2}{4} \= \dfrac{n+2}{4}\mod4 & \text{ if } n \= 2 \mod4, \\[3ex]
  \dfrac{(2n-3)(n+2)}{3}\cdot \dfrac{n+1}{4} \= \dfrac{n+1}{4}\mod4 & \text{ if } n \= 3 \mod4. \\
 \end{array} \right.
\]
\end{proof}

We define a sequence $\mu_1,\dots,\mu_{n+1}$ to be
{\em good} if
\begin{equation}\label{e:rowsumbnd}
\sum_{i=1}^m\mu_i\le nm-\binom{m}{2}
\end{equation}
for each $m\in[1,n+1]$. For our next theorem we will need the 
following property of good sequences:

\begin{lemma}\label{l:good}
If $\mu_1,\dots,\mu_{n+1}$ is a good sequence and $\mu_{c}=\mu_{c+1}-2$
for some $c\in[1,n]$ then the sequence obtained by interchanging
$\mu_c$ with $\mu_{c+1}$ is also good.
\end{lemma}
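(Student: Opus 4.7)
The plan is to track the slack function $g(m) = nm - \binom{m}{2} - \sum_{i=1}^m \mu_i$, so that condition \eref{e:rowsumbnd} for goodness is precisely $g(m) \ge 0$ for all $m \in [1, n+1]$ (with the trivial value $g(0) = 0$). The swap replaces $\mu_c$ with $\mu_{c+1}=\mu_c+2$ in position $c$, and $\mu_{c+1}$ with $\mu_c=\mu_{c+1}-2$ in position $c+1$. Hence $\sum_{i=1}^c \mu_i$ increases by $2$, while every other partial sum is unchanged. So the new slack values satisfy $g'(c) = g(c) - 2$ and $g'(m) = g(m)$ for $m \ne c$; the swapped sequence is therefore good if and only if $g(c) \ge 2$.

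To establish $g(c) \ge 2$, I would compute the discrete second difference of $g$ at $c$. The second difference of $nm-\binom{m}{2}$ equals $-1$, while the second difference of $\sum_{i=1}^m \mu_i$ at $m = c$ equals $\mu_{c+1}-\mu_c=2$. Therefore
\[
g(c-1) + g(c+1) - 2g(c) = -1 - 2 = -3,
\]
i.e., $g(c-1)+g(c+1) = 2g(c)-3$.

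The right-hand side is odd, so the integer $g(c-1)+g(c+1)$ is odd. Both summands are non-negative: $g(c+1)$ by goodness (which applies directly in the edge case $c=n$, since $n+1\in[1,n+1]$), and $g(c-1)$ similarly (with $g(0)=0$ covering the edge case $c=1$). Their sum is therefore a non-negative odd integer, hence at least $1$, so $2g(c)-3 \ge 1$, giving $g(c) \ge 2$, as required.

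The main subtlety is the parity step: using only $g(c-1)+g(c+1)\ge 0$ would yield $g(c) \ge 3/2$, which is insufficient. One has to exploit both the integrality of $g$ and the fact that the shift $-3$ in the second-difference identity is odd, which in turn is forced by the hypothesis $\mu_{c+1}-\mu_c=2$.
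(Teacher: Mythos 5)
Your proof is correct and rests on the same ingredients as the paper's: the goodness constraints at $m=c-1$, $c$, $c+1$ together with integrality and the gap $\mu_{c+1}-\mu_c=2$ are exactly what the paper uses, except that it argues by contradiction with an explicit $\eps\in\{0,1\}$ rather than via your slack function and the second-difference identity $g(c-1)+g(c+1)=2g(c)-3$. Your direct parity argument is a clean repackaging of the same idea, and the edge cases $c=1$ and $c=n$ are handled properly.
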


\begin{proof}
Consider a hypothetical counterexample to the claim.
Let $K=\sum_{i=1}^{c-1}\mu_c$. 
Applying \eref{e:rowsumbnd} for $m\in\{c-1,c,c+1\}$, we see that
\begin{align}
K&\le n(c-1)-\binom{c-1}{2},\label{e:pre}\\
K+\mu_c&\le nc-\binom{c}{2}<K+\mu_c+2,\label{e:on}\\
K+2\mu_c+2&\le n(c+1)-\binom{c+1}{2}.\label{e:post}
\end{align}
It follows from \eref{e:on} that $K+\mu_c=nc-\binom{c}{2}-\eps$ 
where $\eps\in\{0,1\}$. Subtracting this equation from \eref{e:post}
gives $\mu_c+2\le n-c+\eps$. Then, using \eref{e:on} again, we get
$K> nc-\binom{c}{2}-n+c-\eps=n(c-1)-\binom{c-1}{2}+1-\eps$,
contradicting \eref{e:pre}. 
\end{proof}

We can now derive a lower bound on the number of equiparity Latin squares
in the ensemble.

\begin{theorem}\label{t:linmanyequipar}
Let $n\equiv2,3\mod4$.  Any $\OA(n+1,n)$ has at least $\lceil n/4\rceil$
equiparity Latin squares in its ensemble. 
\end{theorem}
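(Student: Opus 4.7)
The plan is to reinterpret $x$ as a tournament invariant and then bound it using the machinery of good sequences. From equation \eqref{e:totedges} we have
\[
2x+\binom{n+1}{3}=\sum_{c=1}^{n+1}\mu_c(n-\mu_c).
\]
Since $\G$ is a tournament on $n+1$ vertices by \lref{l:Sym}, the right-hand side counts directed paths of length two in $\G$: each such path $u\to c\to v$ is counted once at its midpoint. A transitive triple contains exactly one such path and a cyclic triple contains three, so writing $T$ and $C$ for the number of transitive and cyclic triples in $\G$ we obtain $T+3C=\sum_c\mu_c(n-\mu_c)$, which combined with $T+C=\binom{n+1}{3}$ yields $x=C$.

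The theorem therefore reduces to a purely graph-theoretic statement: any tournament on $n+1$ vertices with all out-degrees odd contains at least $\lceil n/4\rceil$ cyclic triples. The odd-parity condition is supplied directly by \lref{l:sigmagraphdegrees} when $n\equiv 2\mod 4$, and after $\sigma$-complementation when $n\equiv 3\mod 4$ (which alters neither the ensemble nor the number of cyclic triples in $\G$). By Goodman's identity $C=\binom{n+1}{3}-\sum_v\binom{\mu_v}{2}$, we must show
\[
\sum_v\binom{\mu_v}{2}\le\binom{n+1}{3}-\lceil n/4\rceil.
\]

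To prove this bound, sort $(\mu_c)$ in decreasing order so that the Landau top-$k$ inequality for a tournament score sequence becomes exactly our good condition, and run an exchange argument driven by \lref{l:good}. The perturbation $(\mu_i,\mu_j)\mapsto(\mu_i+2,\mu_j-2)$ strictly increases $\sum_v\binom{\mu_v}{2}$ by $2(\mu_i-\mu_j)+4\ge 4$ whenever $\mu_i\ge\mu_j\ge 3$ and $\mu_i+2$ does not exceed the largest admissible odd value; \lref{l:good} guarantees that the perturbed sequence can be re-sorted back into decreasing order while remaining good. Iterating until no perturbation is possible characterises the extremum as the block sequence with alternating block sizes $3,1,3,1,\ldots,3$ at the odd values $n-1,n-3,\ldots,1$ when $n\equiv 2\mod 4$, and a similar sequence beginning with a single block of size $1$ at the value $n$ when $n\equiv 3\mod 4$. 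A direct calculation for these explicit extremal sequences yields $\sum_v\binom{\mu^*_v}{2}=\binom{n+1}{3}-\lceil n/4\rceil$, completing the argument. The main obstacle is the characterisation of the extremum: one must carefully analyse which Landau inequalities are tight (and which admit slack forced by odd-parity) at a putative local maximum, and show that the alternating block structure is the only possibility. \lref{l:good} is essential for this bookkeeping because it lets us return the sequence to canonical decreasing form after every local modification without losing the good property.
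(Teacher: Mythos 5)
Your reduction is essentially the paper's argument repackaged in tournament language. The identity $x=C$ (the number of cyclic triples of the $\sigma$-tournament) is correct --- it is Goodman's identity combined with \eref{e:totedgex} and \eref{e:Tintermsofmu} --- and minimising $x$ then amounts to maximising $\sum_v\binom{\mu_v}{2}$, i.e.\ maximising $\sum_v\mu_v^2$, which is exactly the optimisation the paper performs. Your choice of normalisation for $n\equiv3\mod4$ (all out-degrees odd rather than all even) is harmless, since reversal preserves the number of cyclic triples and your extremal sequence is precisely the reversal of the paper's. The Landau top-$m$ inequalities for the sorted score sequence are indeed the good condition \eref{e:rowsumbnd}, and the exchange $(\mu_i,\mu_j)\mapsto(\mu_i+2,\mu_j-2)$ with \lref{l:good} used to restore monotonicity is the paper's mechanism. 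So the route is the same; the cyclic-triple gloss buys a cleaner statement of what is being proved (a Goodman-type lower bound for tournaments with constrained score parities) but no new leverage.

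The genuine gap is the step you yourself flag as ``the main obstacle'': you assert, but do not prove, that iterating the exchange terminates at the alternating $3,1,3,1,\dots$ block sequence and that this is the unique maximiser. That is where most of the work in the paper's proof lives, and two points need care. First, the exchange is only admissible when the perturbed sequence remains good; the operative constraint is not that ``$\mu_i+2$ does not exceed the largest admissible odd value'' but that every partial sum $\sum_{l\le m}\mu_l$ with $i\le m<j$ has slack at least $2$ in \eref{e:rowsumbnd}. The paper arranges this by applying the exchange only at an adjacent pair $(a,a+1)$ where the greedy inequality \eref{e:greedy} fails, so that exactly one partial sum changes and it has the required slack. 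Second, local optimality alone does not characterise the extremum: the paper instead shows that the global optimum must satisfy \eref{e:greedy} for every $m$, observes that \eref{e:mun-1}, \eref{e:rowsumbnd} and \eref{e:greedy} together determine each term of a monotonic sequence from its predecessors (so at most one such sequence exists), and then verifies that the explicit block sequence satisfies all three conditions. Supplying this uniqueness argument, together with the closing computation of $\sum_v\binom{\mu^*_v}{2}$ (equivalently of $T$), is what your sketch still owes.
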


\begin{proof}
Let $A$ be an $\OA(n+1,n)$ and let $M$ be the $\sigma$-matrix of $A$.
Let $\mu_i$ denote the total of the entries in row $i$ of $M$.  In the
$n\equiv2\mod4$ case, each $\mu_i$ is odd by
\lref{l:sigmagraphdegrees}.  In the $n\equiv3\mod4$ case, by
standardisation we can ensure that each $\mu_i$ is even. In
either case, we have
\begin{equation}\label{e:mun-1}
\mu_i\equiv n-1\quad\mod2
\end{equation}
for each $i$.
Let $T$ be the total number of edges among the $\tau$-graphs for $A$.
The number of equiparity Latin squares in the ensemble of $A$ is
$\big(T-{n+1\choose 3}\big)/2$ by \eref{e:totedgex}, which will be
minimised by minimising $T$.  By \eref{e:totedges}, this is achieved
by maximising $\sum\mu_i^2$.  We may assume that
$\mu_1\ge\mu_2\ge\cdots\ge\mu_{n+1}$ by relabelling if necessary.
Also, \eref{e:rowsumbnd} holds for each $m$
because the leading principal minor of order $m$ in $M$ 
has half its off-diagonal entries equal to zero.

Assume now that $\mu_1,\dots,\mu_{n+1}$ is the monotonic good sequence
that maximises $\sum\mu_i^2$ subject to \eref{e:mun-1}. 
We claim that the sequence 
can be found with a greedy algorithm, in the sense that
\begin{equation}\label{e:greedy}
\sum_{i=1}^m\mu_i\ge nm-\binom{m}{2}-1
\end{equation}
for $1\le m\le n+1$. Suppose not, and let $m=a$ be such that this
inequality fails. 
Consider a new sequence $\mu'_1,\dots,\mu'_{n+1}$
for which $\mu'_a=\mu_a+2$, $\mu'_{a+1}=\mu_{a+1}-2$, and $\mu'_i=\mu_i$ for
$i\notin\{a,a+1\}$. Clearly this new sequence is good, 
satisfies \eref{e:mun-1} and can be made monotonic by repeated application
of \lref{l:good}. However,
\[
\sum_{i=1}^m(\mu'_i)^2=\sum_{i=1}^m\mu^2_i+4(\mu_a-\mu_{a+1})+8>\sum_{i=1}^m\mu^2_i,
\]
contradicting the optimality of our original sequence. 
This proves our claim that the
optimal sequence satisfies \eref{e:greedy}.

Note that there can be at most one monotonic good sequence 
satisfying \eref{e:mun-1} and \eref{e:greedy}, because 
the constraints are such that each term of the sequence is
determined by its predecessors.
We next argue that this optimal sequence is given by
$\mu_1=\mu_2=\mu_3=n-1$, $\mu_4=n-3$ and $\mu_i=\mu_{i-4}-4$ for $4<i\le n+1$.
Certainly this is a monotonic sequence of non-negative integers, since
$\mu_{n+1}=n-1-4(n-2)/4=1$ for $n\= 2\mod4$ and
$\mu_{n+1}=n-3-4(n-3)/4=0$ for $n\= 3\mod4$. Also, for $m=4\alpha+\beta$ with 
$\beta\in\{1,2,3\}$, we have
\begin{align*}
\sum_{\smash{i=1}}^m\mu_i&=4(n-1+n-5+\cdots+n-4\alpha+3)
-2\alpha +\beta(n-4\alpha-1)\\
&=4\alpha(n-2\alpha+1)-2\alpha +\beta(n-1-4\alpha)
=nm-\binom{m}{2}+\binom{\beta}{2}-\beta.
\end{align*}
Since $\binom{\beta}{2}-\beta\in\{-1,0\}$, we see that both
\eref{e:rowsumbnd} and \eref{e:greedy} are satisfied in this case.
These constraints are also satisfied in the case $m=4\alpha$, because
\begin{align*}
\sum_{i=1}^m\mu_i&=4(n-1+n-5+\cdots+n-4\alpha+3)-2\alpha
=4\alpha(n-2\alpha+1)-2\alpha
=nm-\binom{m}{2}.
\end{align*}
Returning to \eref{e:totedges}, we now know that
\begin{align*}
T&\ge n{n+1\choose 2}-3\sum_{i=0}^{\lfloor(n-1)/4\rfloor}(n-1-4i)^2
-\sum_{i=0}^{\lfloor(n-3)/4\rfloor}(n-3-4i)^2\\
&=
\begin{cases}
n^3/6+n/3+1&\text{if }n\= 2 \mod4,\\
n^3/6+n/3+1/2&\text{if }n\= 3 \mod4,\\
\end{cases}
\end{align*}
which then implies the result, via \eref{e:totedges} and \eref{e:totedgex}.
\end{proof}

\begin{corollary}\label{cy:atleast2noniso}
If $n \equiv 2 \mod{4}$ and $n>2$ then the ensemble of an $\OA(n+1,n)$
contains at least two Latin squares that are not isotopic to each other.
\end{corollary}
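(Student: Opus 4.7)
The plan is to exploit the fact that, when $n$ is even, the parity type of a Latin square is an isotopism invariant, and then show that the ensemble must contain both an equiparity and a non-equiparity Latin square.

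First, I would observe that for $n$ even, \lref{l:tauparconj}(iii) implies that applying any permutation $\gamma$ to a column of the $\OA$ leaves every $\tau_{ij}^c$ unchanged, since the extra term $n\cdot\pi(\gamma)$ vanishes in $\Z_2$. An isotopism of a Latin square corresponds to three such column permutations on the associated $\OA(3,n)$, so the three parities $\pi_r,\pi_c,\pi_s$ of a Latin square are preserved by isotopism when $n$ is even. In particular, being equiparity is an isotopism invariant.

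Next, since $n\equiv 2\mod 4$ and $n>2$, we have $n\ge 6$, so \tref{t:linmanyequipar} guarantees at least $\lceil n/4\rceil\ge 2$ equiparity Latin squares in the ensemble of the given $\OA(n+1,n)$. On the other hand, not every Latin square in the ensemble can be equiparity: if it were, then choosing any four columns would produce an $\OA(4,n)$ all four of whose ensemble Latin squares are equiparity, contradicting \lref{l:notallequipar}. Hence the ensemble contains at least one equiparity Latin square and at least one non-equiparity Latin square.

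By the first step these two Latin squares have different parity types, so they cannot be isotopic to each other. This produces two Latin squares in the ensemble that are not isotopic. The argument is essentially immediate from the preceding results; the only point requiring care is the invariance of parity under isotopism, which is where the hypothesis $n\equiv 2\mod4$ (in particular $n$ even) is used in an essential way.
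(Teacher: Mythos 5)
Your proof is correct and follows essentially the same route as the paper: invoke \tref{t:linmanyequipar} to get an equiparity Latin square in the ensemble, \lref{l:notallequipar} (using $n>2$ so that four columns are available) to get a non-equiparity one, and the isotopism-invariance of parity for even orders to conclude they are non-isotopic. The extra detail you supply on why parity is an isotopism invariant when $n$ is even is a correct elaboration of what the paper states in one line.
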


\begin{proof}
By \tref{t:linmanyequipar} the ensemble contains an equiparity Latin
square.  However, given that $n>2$, \lref{l:notallequipar} tells us that
the ensemble also contains a Latin square which is not
equiparity. Parity is an isotopism invariant for even orders.
\end{proof}

In our definition of ensemble we have insisted that each set of three
columns produces a single Latin square. If we had instead allowed the
three columns to be used in any order then 6 potentially different
Latin squares would be produced for each set of three
columns. \cyref{cy:atleast2noniso} could then be strengthened to say
that there will be at least 4 isotopism classes among the Latin
squares corresponding to an $\OA(n+1,n)$ when $n\equiv2\mod4$, since
each of the parity types 001,010,100,111 must occur.

It is also worth noting that there is a PP-plausible $\tau$-parity
that achieves the bound in \tref{t:linmanyequipar}.
We can build the adjacency matrix $M$ for the corresponding
$\sigma$-graph as follows.
Let
\[
B_3=\left[
\begin{array}{ccc}
0&0&1\\
1&0&0\\
0&1&0\\
\end{array}
\right]
\mbox{ \ and \ }
B_4=\left[
\begin{array}{cccc}
0&0&1&1\\
1&0&0&1\\
0&1&0&1\\
0&0&0&0\\
\end{array}
\right].
\]
If $n=4\alpha+2$, place $\alpha$ copies of the block $B_4$ and one
copy of $B_3$ down the diagonal of $M$; if $n=4\alpha+3$, place
$\alpha+1$ copies of $B_4$ down the diagonal of $M$. Fill all other
entries above the diagonal with 1's and all those below with 0's. It
is routine to check that this construction has the required
properties, including achieving the optimal sequence $\{\mu_i\}$.

In contrast to \tref{t:linmanyequipar}, 
there are plausible $\tau$-parities for which the ensemble
contains no equiparity Latin squares.

\begin{lemma}\label{l:zeroallodd}
If $n \equiv 2,3 \mod{4}$ and $k$ is arbitrary,
then there exists a plausible $\tau$-parity
for which there are no equiparity Latin squares in the ensemble of the
associated $\OA(k,n)$.
\end{lemma}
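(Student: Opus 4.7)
The plan is to exhibit an explicit plausible $\tau$-parity by specifying its standardised $\sigma$-parity, chosen so that every column triple forces the row-parity of the associated Latin square to be zero. Using the standardisation \eref{e:convent}, I would set $\pi(\sigma_{ij}) = 0$ for every pair $1 \le i < j \le k$. By \tref{t:sigmaplausible}, any assignment of values to $\pi(\sigma_{ij})$ for $1 \le i < j \le k$ with $(i,j) \ne (1,2)$ extends to a plausible $\tau$-parity via \eref{e:convent}--\eref{e:sigij}, so this specification defines a legitimate plausible $\tau$-parity of an $\OA(k,n)$.

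The second step is to verify that no Latin square in the ensemble is equiparity. By \eref{e:sigmacommutativity}, $\pi(\sigma_{ji}) = \binom{n}{2} = 1$ for $i < j$, since $n \equiv 2,3 \mod{4}$. For any triple of columns $c_1 < c_2 < c_3$, the Latin square $M$ they determine has row-parity
\[
\pi_r(M) = \tau^{c_1}_{c_2 c_3} = \pi(\sigma_{c_1 c_2}) + \pi(\sigma_{c_1 c_3}) = 0 + 0 = 0
\]
by \eref{e:taufromsigmas}, since $c_1$ is strictly smaller than both $c_2$ and $c_3$. Equiparity for $n \equiv 2,3 \mod{4}$ is the parity type $111$, so $M$ is not equiparity.

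There is no substantive obstacle: the argument is a direct verification. The conceptual content is that the ``lexicographic all-zero'' standardised $\sigma$-parity assigns $\pi(\sigma_{ci}) = 0$ whenever $c < i$, and taking $c = c_1$ in any column triple simultaneously annihilates the row-parity across the entire ensemble. One could equally well use the mirrored choice $\pi(\sigma_{ij}) = 1$ for all $i<j$ to annihilate the symbol-parity instead, or interpolate between them; the point is that any such uniform convention kills one of the three parities in every triple at once.
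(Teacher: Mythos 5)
Your proposal is correct and is essentially the paper's own proof: choosing $\pi(\sigma_{ij})=0$ for all $i<j$ is precisely the lower-triangular $\sigma$-matrix used there, since \eref{e:sigmacommutativity} with $\binom{n}{2}\equiv 1$ forces every entry below the diagonal to be $1$. The only (immaterial) difference is that you verify just the row-parity $\tau^{c_1}_{c_2c_3}=0$ for each ordered triple, whereas the paper records all three $\tau$-values and notes that exactly one of them equals $1$, so the parity type is never $111$.
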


\begin{proof}  
Take the $\sigma$-matrix to be lower triangular.  Then the
$\tau$-parities (for $i<j$) are given by:
$$ \tau_{ij}^c =  \left\{ \begin{array}{ll}
0 & \textrm{if } i<j<c \textrm{ or } c<i<j, \\
1 & \textrm{if } i<c<j.
\end{array} \right.$$
For any three distinct integers $c,i,j
\in [1, k]$, exactly one of $\tau_{jc}^i, \tau_{ic}^j, \tau_{ij}^c$ 
is $1$, and the result follows.
\end{proof}

We stress that the plausible $\tau$-parities in \lref{l:zeroallodd}
are not PP-plausible, by \lref{l:sigmagraphdegrees}.  Most, though not
all, of the restrictions that we have demonstrated on the parities of
Latin squares in the ensemble only apply to the projective plane case.

\section{Concluding remarks}

We have considered two notions of parity for the orthogonal arrays
that correspond to MOLS. One of these, $\sigma$-parity, was introduced
by Glynn and Byatt \cite{GB12} in a more limited setting. The other,
$\tau$-parity, is a direct generalisation of the row, column and
symbol parities of Latin squares. These two notions of parity are
closely related; $\tau$-parity is determined by $\sigma$-parity, and
the converse is true up to complementation (see
\sref{ss:equivalencetausigma}).  The relationship between the two
parities proved very fruitful throughout our investigations. For
example, it provided a very simple proof of \lref{l:LStriangleparity},
which generalises the well-known relationship between the row, column
and symbol parities of Latin squares.

In \sref{s:graphs} we introduced useful graph theoretic models for the
two notions of parity. Each $\OA(k,n)$ has one $\sigma$-graph, $k$
$\tau$-graphs and one ``stack'' which is formed by merging the
$\tau$-graphs, modulo $2$. The $\sigma$-graph is undirected for $n\=
0,1\mod 4$, but is a tournament for $n\= 2,3\mod 4$ (\lref{l:Sym}).
The $\tau$-graphs and stack are highly structured
(\lref{l:taugraphbipartite}, \tref{t:stackedgraph}). Further
restrictions apply to the structure of $\sigma$-graphs, $\tau$-graphs
and the stack for $\OA$s that come from projective planes
(\lref{l:sigmagraphdegrees}, \lref{l:parityofpartitesets},
\cyref{c:stackPP}). These restrictions are based on the equivalence
between projective planes and sharply 2-transitive sets of
permutations.

In \sref{s:numpar} we considered the question of how many different
$\tau$-parities might be obtained by an $\OA(k,n)$. We phrased this in terms
of $\dim(k,n)$, the information content of the $\tau$-parity, in bits.
We showed that $\dim(k,n)\le{k\choose 2}-1$ (\cyref{c:dim}).  Later we
showed that $\dim(n+1,n)\le {n \choose 2}$ if $n$ is odd and
$\dim(n+1,n)\le { {n \choose 2}-1}$ if $n$ is even  (\cyref{c:dimPP}).

An interesting question is whether there are restrictions on parities
of an $\OA(k,n)$ other than the ones that we have demonstrated.  This
question is wide open for the case when $k$ is comparable in size to
$n$. However, we conjecture that when $n$ is large relative to $k$, no
further restrictions apply (\cjref{cj:allactual}). We proved this
conjecture for $k\le5$ in \sref{ss:plausisactual} by showing that all
plausible $\tau$-parities are actually achieved. Moreover, it is
possible to embed any $k$-MOLS of order $n$ inside some set of
$k$-MOLS of order $N$ whenever $N$ is large enough. Hence if $(k,n)$ is
such that $\dim(k,n)$ achieves its upper bound then
$\dim(k,N)=\dim(k,n)$ for all large $N$.

An open question, related to the issue of whether there are further
constraints when $k$ is comparable to $n$, is the issue of unique
completion. It was shown by Metsch \cite{Met91} that any
$\OA(n-O(n^{1/3}),n)$ has a unique completion to an $\OA(n+1,n)$, up
to isomorphism. It would be interesting to find parity analogues of
this result. Examining the proof of \tref{t:PPplausible}, we see that
the $\sigma$-parity of an $\OA(n,n)$ determines the $\sigma$-parity of
its completion to an $\OA(n+1,n)$, but the same is not true for an
$\OA(n-1,n)$ without further developments in the theory.

In \sref{s:ensemble} we considered the {\em ensemble} of an
$\OA(k,n)$, which is the set obtained by taking 3 columns of the $\OA$
at a time and interpreting the result as a Latin square.  It turns out
that there are many bounds and congruences that restrict the number of
Latin squares of each parity that may occur within the ensemble.  The
restrictions, which mostly apply when $k=n+1$, are weakest for 
$n\= 0,1\mod 4$.  Many of the results in \sref{s:ensemble} that apply to
$n\= 2\mod 4$ also apply to $n\= 3\mod 4$, but are not so limiting in
that case since parity is not an isotopism invariant for odd $n$.
Hence, our constraints are strictest when $n\= 2\mod 4$, perhaps
offering some insight into why projective planes of these orders are
hard to construct (and are believed by many not to exist for $n>2$).
In \cyref{cy:atleast2noniso} we showed that all the Latin squares in
the ensemble cannot be isotopic to each other when $n\= 2\mod4$.
Indeed, as $n$ grows the number of ``equiparity'' Latin squares in the
ensemble grows at least linearly (\tref{t:equiparity23mod4}), but it
can never be more than $\frac14+o(1)$ of all the Latin squares in the
ensemble (\tref{t:asyequi}).


The preponderance of even parities among the known projective planes
of order 16 invites further investigation. Certainly, for $n\= 0\mod4$
all of the constraints that we have found are trivially satisfied when
all parities are even. This means the $\sigma$-graph, all
$\tau$-graphs and the stack are all empty graphs, which seems to be
the easiest way to satisfy all known requirements. In contrast, for
$n\= 2\mod 4$ all these graphs are forced to have edges, and the
constraints such as \lref{l:parityofpartitesets} and \cyref{c:stackPP}
seem at first glance to be much harder to satisfy. However, it is
worth stressing that in \tref{t:PPplausible} we have a mechanism for
producing many choices for the parities that satisfy all the
constraints that we have demonstrated in this paper. Our work cannot
be used to rule out the existence of any projective plane without the
discovery of a new constraint on its parity.

\subsection*{Acknowledgements}

The authors are very grateful to Peter Dukes for supplying
\tref{t:asymIMOLSexistence} and to Darcy Best for helpful proofreading.

  \let\oldthebibliography=\thebibliography
  \let\endoldthebibliography=\endthebibliography
  \renewenvironment{thebibliography}[1]{%
    \begin{oldthebibliography}{#1}%
      \setlength{\parskip}{0.4ex plus 0.1ex minus 0.1ex}%
      \setlength{\itemsep}{0.4ex plus 0.1ex minus 0.1ex}%
  }%
  {%
    \end{oldthebibliography}%
  }


\begin{thebibliography}{99}
\def\baselinestretch{1}\small\normalsize
\begin{small}

\bibitem{AL15}
R.~Aharoni, M.~Loebl, 
The odd case of Rota's bases conjecture,
{\em Adv. Math.} {\bf282} (2015), 427--442.

\bibitem{Alp17}
L.~Alpoge,
Square-root cancellation for the signs of Latin squares,
{\it Combinatorica}, to appear. DOI: 10.1007/s00493-015-3373-7.


\bibitem{BCN89} A.\,E.~Brouwer, A.\,M.~Cohen and A.~Neumaier, 
{\it Distance-Regular Graphs}, Springer, Berlin, 1989. 

\bibitem{CW16}
N.\,J.~Cavenagh and I.\,M.~Wanless, 
There are asymptotically the same number of Latin squares of each parity,
{\em Bull. Aust. Math. Soc.} {\bf94} (2016), 187--194.

\bibitem{CD11} C.\,J.~Colbourn and J.\,H.~Dinitz, 
Mutually orthogonal Latin squares: A brief survey of constructions,
{\it J. Statist. Plann. Inference}, {\bf 95} (2001), 9--48.

\bibitem{handbook} 
C.\,J.~Colbourn and J.\,H.~Dinitz (eds),
{\it Handbook of combinatorial designs}, 2nd ed.,
Chapman \& Hall/CRC, Boca Raton, FL, 2007. xxii+984 pp.

\bibitem{DGGL10} D.\,M.~Donovan, M.\,J.~Grannell, T.\,S.Griggs and 
J.\,G.~Lefevre, On parity vectors of Latin squares, {\it Graphs Combin.} 
{\bf26} (2010) 673--684.

\bibitem{DvB15} P.\,J.~Dukes and C.\,M.~van Bommel, 
Mutually orthogonal Latin squares with large holes, 
{\it J. Statist. Plann. Inference}, {\bf 159} (2015), 81--89.

\bibitem{EW16}
J.~Egan and I.\,M.~Wanless, 
Enumeration of MOLS of small order,
{\it Math. Comp.} {\bf85} (2016), 799--824.

\bibitem{Gly10} D.\,G.~Glynn,
The conjectures of Alon-Tarsi and Rota in dimension prime minus one,
{\it SIAM J. Discrete Math.} {\bf24} (2010), 394--399. 

\bibitem{GB12} D.\,G.~Glynn and D.~Byatt, 
Graphs for orthogonal arrays and projective planes of even order,  
{\it SIAM J. Discrete Math.} {\bf 26} (2012), No. 3, 1076--1087.

\bibitem{Jac06} E.~Jacobsthal, Anwendungen eiuer formel aus der theorie der 
quadratischen reste, Dissertation (Berlin, 1906), 26--32.

\bibitem{Jan95} J.\,C.\,M.~Janssen, 
On even and odd Latin squares, 
{\it J. Combin. Theory Ser. A}, {\bf 69} (1995) 173--181.

\bibitem{KMOW14}
P.~Kaski, A.\,D.\,S. Medeiros, P.\,R.\,J.~\"Osterg\aa rd and I.\,M.~Wanless, 
Switching in one-factorisations of complete graphs,
{\it Electron. J. Comb.\/} {\bf21}(2) (2014), \#P2.49.

\bibitem{KD15}
A.\,D.~Keedwell and J.~D\'enes, {\it Latin squares and their applications}
(2nd ed.),
North Holland, Amsterdam, 2015.

\bibitem{Kot12}
D.~Kotlar,
Parity types, cycle structures and autotopisms of Latin squares,
{\em Electron. J. Combin.} {\bf19}(3) (2012), \#P10, 17 pp.

\bibitem{MS13} B.\,D.~McKay and P.\,~Schweitzer, 
Switching reconstruction of digraphs,   
{\it J. Graph Theory} {\bf76} (2014), 279--296.

\bibitem{Met91}
K.~Metsch, 
Improvement of Bruck's completion theorem,
{\it Des. Codes Cryptogr.} 1 (1991), 99--116. 

\bibitem{OEIS}
N.\,J.\,A.~Sloane, The On-Line Encyclopedia of
Integer Sequences,\\ 
\url{http://www.research.att.com/~njas/sequences/}.

\bibitem{vB15} C.\,M.~van Bommel, 
An asymptotic existence theory on incomplete mutually orthogonal Latin squares, 
Master's thesis, Uni. Victoria, (2015).

\bibitem{SW12}
D.\,S.~Stones and I.\,M.~Wanless, 
How not to prove the Alon-Tarsi conjecture,
{\it Nagoya Math.\ J.} {\bf205} (2012), 1--24.


\bibitem{Wan04} I.\,M.~Wanless, 
Cycle switches in Latin squares, 
{\it Graphs Combin.} {\bf20} (2004) 545--570.

\bibitem{WWWW} I.\,M.~Wanless, 
Data on Mutually Orthogonal Latin Squares (MOLS)\\  
\url{http://users.monash.edu.au/~iwanless/data/MOLS/}.

\bibitem{Zappa96} P.\,Zappa, Triplets of Latin squares, 
{\it Bollettino U.M.I.} {\bf (7) 10-A} (1996) 63--69.

\end{small}
\end{thebibliography}
\end{document}